\def\version{05/12/2022 -- version 2
\hfill\href{https://arxiv.org/abs/2205.09541}{arXiv:2205.09541}
}
\definecolor{fxnote}{rgb}{0.0000,0.6000,0.0000}
\newtheorem{thm}{Theorem}[section]
\newtheorem{lem}[thm]{Lemma}
\newtheorem{prop}[thm]{Proposition}
\newtheorem{cor}[thm]{Corollary}
\theoremstyle{remark}
\newtheorem{rem}[thm]{Remark}
\theoremstyle{definition}
\newtheorem{assump}[thm]{Assumption}
\newtheorem{defn}[thm]{Definition}
\newtheorem{examp}[thm]{Example}
\numberwithin{equation}{section}
\def\leq{\leqslant}
\def\geq{\geqslant}
\def\iso{\cong}
\def\Lra{\Longrightarrow}
\def\({\left(}
\def\){\right)}
\def\<{\left<}
\def\>{\right>}
\def\:{\colon}
\def\.{\cdot}
\def\bar#1{\overline{#1}}
\def\ph#1{\phantom{#1}}
\def\phi{{\varphi}}
\def\epsilon{{\varepsilon}}
\def\F{\mathbb{F}}
\def\Z{\mathbb{Z}}
\def\k{\Bbbk}
\def\Mod{\mathbf{Mod}}
\DeclareMathOperator{\Cohom}{Cohom}
\DeclareMathOperator{\Coext}{Coext}
\DeclareMathOperator{\Ext}{Ext}
\DeclareMathOperator{\Hom}{Hom}
\DeclareMathOperator{\im}{im}
\DeclareMathOperator{\Cotor}{Cotor}
\DeclareMathOperator{\Tor}{Tor}
\def\kO{{k\mathrm{O}}}
\def\kU{{k\mathrm{U}}}
\def\tmf{\mathrm{tmf}}
\def\StA{\mathcal{A}}
\def\StE{\mathcal{E}}
\def\StP{\mathcal{P}}
\DeclareMathOperator{\Sq}{Sq}
\def\op{{\mathrm{op}}}
\DeclareMathOperator{\Prim}{Prim}
\DeclareMathOperator{\Id}{Id}
\def\ev{\mathrm{ev}}
\DeclareMathOperator*{\colim}{colim}
\def\Palgebra{$P$-algebra}
\def\Palgebras{$P$-algebras}
\DeclareMathOperator{\pd}{pd}
\def\Mod{\mathbf{Mod}}
\def\MOD{\mathbf{Mod}}
\def\Comod{\mathbf{Comod}}
\def\rlim^#1_#2{{\lim_{#2}}^{#1}}
\title[$P$-algebras and their duals]
{On $P$-algebras and their duals}
\date{last updated \version}
\author{Andrew Baker}
\address{
School of Mathematics \& Statistics,
University of Glasgow, Glasgow G12~8QQ, Scotland.}
\email{andrew.j.baker@glasgow.ac.uk}
\urladdr{\quad http://www.maths.gla.ac.uk/$\sim$ajb \quad https://orcid.org/0000-0002-9369-7702}
\thanks{
I would like to thank the following: The
Max-Planck-Institut f\"ur Mathematik in
Bonn for supporting my visit in January
2020 and April-May 2022; Tobias Barthel,
Ken Brown, Bob Bruner, Doug Ravenel, Nige
Ray, John Rognes and Chuck Weibel for
numerous helpful conversations on algebra
and topology.
}
\keywords{Steenrod algebra, Hopf algebras, Homotopy theory}
\subjclass[2020]{Primary 55S10; Secondary 55P42, 57T05}
\begin{document}

\begin{abstract}
The notion of $P$-algebra due to Margolis,
building on work of Moore \& Peterson,
was motivated by the case of the Steenrod
algebra at a prime and its modules. We
develop aspects of this theory further,
focusing especially on coherent modules
and finite dimensional modules. We also
discuss the dual Hopf algebra of $P$-algebra
and its comodules. One of our aims is provide
a collection of techniques for calculating
cohomology groups over $P$-algebras and
their duals, in particular giving vanishing
results. Much of our work is implicit in
that of Margolis and others but we are
unaware of systematic discussions in
the literature. We give some examples
illustrating topological applications
which follow easily from our results.
\end{abstract}

\maketitle
\tableofcontents

\section*{Introduction}

This paper is a rewrite of the algebraic
part of~\cite{AB:MSpBousfieldclass} and we
intend writing a companion paper on the
topological applications. Shortly after
posting the earlier version, we discovered
some significant gaps in the calculations
and this led to some substantial reworking
of the algebra which we feel is of enough
interest to present it independently. Indeed,
as far as we are aware, although there has
been significant use of properties of
$P$-algebras in work involving the Steenrod
algebra, there does not appear to be much
discussion of the dual notion so it seems
worthwhile recording some basic results
and in particular the existence of certain
spectral sequences for their cohomology.

The theory of \Palgebras{} of
Margolis~\cite{HRM:Book} which builds on work
of Moore \& Peterson~\cite{JCM-FPP:NearlyFrobAlgs},
provides an important theoretical framework
for understanding modules over the Steenrod
algebra at a prime with many applications in
Algebraic Topology. In this paper we review
some basic results on \Palgebras{} and their
modules, particularly emphasising properties
of coherent modules and their relationship
with finite dimensional modules; we also
mention some properties of pseudo-coherent
modules. Next we consider the dual Hopf algebra
of a \Palgebra{} and its comodules. In order
to set up some Cartan-Eilenberg spectral sequences
we discuss the homological algebra required to
make a bivariant derived functor of the homomorphism
for comodules with the aim of obtaining three
such spectral sequences, one of which involves
dualising to modules over the~\Palgebra{} itself
because comodules over Hopf algebras do not always
have resolutions by projective objects (although
recent work of Salch related to~\cite{AS:ProjComods} 
sheds light on this for connective graded comodules
over a connective graded Hopf algebra). After
reviewing some properties of the mod~$2$ Steenrod
algebra and its dual, we discuss doubling and
then describe some families of subHopf algebras
and their duals. Finally we give some applications
of our results on coherent modules to show vanishing
of homotopy groups of maps between some spectra,
rederiving and extending results of Lin, Margolis
and Ravenel.

An important motivation for our work is to rework
some the algebraic machinery used in Ravenel's
seminal paper~\cite{DCR:Localn} and we will
apply this in the planned sequel. In~\cite{AB:LocFrobAlg}
we have also developed an ungraded analogue of 
the theory of $P$-algebras which sheds light 
on some aspects of it.

\section{\Palgebras{} and their modules}
\label{sec:P-algebras}
We will make use material on \Palgebras{} and
their modules contained in~\cite{HRM:Book}*{chapter~13}.
Here is a summary of our assumptions, conventions
and notations, some of which differ slightly
from those of Margolis.
\begin{itemize}
\item
We will often suppress explicit mention of
internal grading in cohomology and just
write $M$ for $M^*$ when discussing a module
over $A=A^*$. When working in homology we will
usually write $M_*$ or $A_*$.
\item
We will work with graded vector spaces over
a field~$\k$ and in particular, $\k$-algebras
and their (co)modules. In our topological
applications,~$\k=\F_2$ although similar results
for odd primes are easily found.

For a finite type graded vector space $V_*$ we think
of~$V_n$ as dual to $V^n=\Hom_\k(V_n,\k)$, so $V^*$
is the cohomologically graded degree-wise dual, and
bounded below means that~$V^*$ is bounded below. We
can also start with a finite type graded vector space
$V^*$ and form $V_*$ where $V_n=\Hom_\k(V^n,\k)$;
the double dual of $V_*$ is canonically isomorphic
to $V_*$, and vice versa. We will denote the positive
degree part of a graded vector space by~$V^+$; of
course for a graded $\k$-algebra~$R$, $R^+$ is an
ideal which is maximal if~$R$ is also connected,
making~$R$ local.

A graded vector space which is finite dimensional
will be referred to as a \emph{finite}; when~$\k$
is finite this terminology agrees with the use of
finite for a module over a \Palgebra{} by Margolis.
\item
In this paper, a \emph{\Palgebra}~$A$ will always
be a \emph{$P$-Hopf algebra}, i.e., a strictly
increasing union of connected finite dimensional
cocommutative Hopf algebras $A(n)\subset A(n+1)\subset A$.
Thus each~$A(n)$ is a Poincar\'e duality algebra
and we will denote its highest non-trivial degree
by~$\pd(n)$ as it is also the Poincar\'e duality
degree; this number satisfies the inequality
$\pd(n)<\pd(n+1)$. We also stress that each~$A(n)$
is a local graded ring and $A(n)^{\pd(n)}=A(n)^0=\k$.
Other important properties are that~$A$ is free
as a left or right $A(n)$-module and~$A$ is a
coherent $\k$-algebra. Although in general
Margolis does not require \Palgebras{} to be
of finite type, all the examples we consider
have that property and it is required in some
of our homological results so we will assume
it holds.

It is important to note that $P$-algebras are
\emph{coherent} (but not Noetherian) and their
coherent modules play an important r\^ole in
this paper. For general properties of coherence
and pseudo-coherence we refer the reader to
Bourbaki~\cite{Bourbaki:HomAlg}*{ex.~\S3.10},
and also Cohen~\cite{JMC:Coherent} for an
account aimed at an algebraic topology
audience.

In Algebraic Topology the important examples
of \Palgebras{} are the Steenrod algebra~$\StA$
at a prime~$p$ as well as infinite dimensional
sub and quotient Hopf algebras.
\end{itemize}

We will use the following basic result
stated on page~195 of~\cite{HRM:Book}
but left as an exercise.
\begin{prop}\label{prop:Palg-finite}
Suppose that $A$ is a P-algebra. Let~$M$
be a finite $A$-module and~$F$ a bounded
below free $A$-module. Then
\[
\Ext_A^*(M,F)=0.
\]
\end{prop}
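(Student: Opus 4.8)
The plan is to reduce to the trivial module and then to play the filtration $A=\bigcup_n A(n)$ against the self-injectivity of the $A(n)$.

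\emph{Step 1: reduction to $M=\Sigma^d\k$.} Since $A$ is connected, any finite $A$-module $M$ admits a finite filtration whose successive quotients are suspensions $\Sigma^{d_i}\k$ of the trivial module; applying the long exact sequence for $\Ext_A^*(-,F)$ repeatedly reduces the statement to the case $M=\Sigma^d\k$, and after an internal shift to $M=\k$.

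\emph{Step 2: the case $*=0$.} Here $\Hom_A(\k,F)$ is the socle of $F$, i.e.\ the submodule annihilated by $A^+$. As $F$ is free it is enough to see that $A$ itself has trivial socle. If $0\neq a\in A^d$, choose $n$ with $a\in A(n)$ and $d<\pd(n)$; since $A(n)$ is a Poincar\'e duality algebra the multiplication pairing $A(n)^d\otimes A(n)^{\pd(n)-d}\to A(n)^{\pd(n)}\cong\k$ is perfect, so there is $b\in A(n)^+$ with $ba\neq0$. Hence $A^+a\neq0$, so the socle vanishes and $\Hom_A(\k,F)=0$.

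\emph{Step 3: the positive-degree case.} For each $n$ consider the short exact sequence $0\to L_n\to A\sslash A(n)\to\k\to0$, where $A\sslash A(n)=A\otimes_{A(n)}\k$ and $L_n=(A\sslash A(n))^+$. Because $A$ is free, hence flat, over $A(n)$ and carries free $A(n)$-modules to free $A$-modules, the induction functor $A\otimes_{A(n)}(-)$ is exact and preserves projectives, so the (derived) induction--restriction adjunction gives
\[
\Ext_A^*(A\sslash A(n),F)\cong\Ext_{A(n)}^*(\k,F|_{A(n)}).
\]
Now $F|_{A(n)}$ is free over $A(n)$ and $A(n)$, being a Poincar\'e duality algebra, is self-injective, so this vanishes for $*>0$. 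Feeding this into the long exact sequence for $\Ext_A^*(-,F)$ of the displayed short exact sequence yields $\Ext_A^s(\k,F)\cong\Ext_A^{s-1}(L_n,F)$ for $s\geq2$, and identifies $\Ext_A^1(\k,F)$ with $\coker\bigl(\Hom_{A(n)}(\k,F|_{A(n)})\to\Hom_A(L_n,F)\bigr)$.

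\emph{Step 4: passing to the limit (the main obstacle).} One now exploits that the connectivity of $L_n$ equals that of $A\sslash A(n)$, which tends to $\infty$ as $n\to\infty$ because $A=\varinjlim A(n)$. Equivalently, writing $P_\bullet\to\k$ for the minimal free resolution over $A$ and restricting it to each $A(n)$ (where it remains a free, if non-minimal, resolution), one has $\Hom_A(P_\bullet,F)=\varprojlim_n\Hom_{A(n)}(P_\bullet,F)$ with each term acyclic in positive degrees by Step~3. The delicate point --- and the step I expect to be the crux --- is that $F$ is \emph{unbounded above}, so a naive internal-degree count does not close the argument; one must instead combine the $\Ext_{A(n)}$-vanishing with the growth of the connectivities, e.g.\ by checking a Mittag--Leffler condition degreewise on the above tower (equivalently, iterating the d\'evissage of Step~3 in a fixed internal degree and using that the connectivity of the successive ``syzygies'' eventually exceeds any given bound), so that the relevant $\lim^1$ contributions die and $\Ext_A^s(\k,F)=0$ for all $s\geq1$.
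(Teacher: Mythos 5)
Your Step~2 (the socle computation via Poincar\'e duality in a large enough $A(n)$) is correct and is essentially the paper's own argument for the vanishing of $\Hom_A(M,F)$; the reduction to $M=\k$ in Step~1 is also fine. The problem is the positive-degree part. The paper disposes of $\Ext_A^{s}(M,F)$ for $s>0$ in one line by citing Margolis's theorem that bounded below projective modules over a $P$-algebra are \emph{injective}; all the real work is outsourced to that theorem. You instead try to prove the vanishing directly by playing the filtration $A=\bigcup_n A(n)$ against the self-injectivity of each $A(n)$, and the decisive step --- your Step~4 --- is not actually carried out. You acknowledge this yourself (``the step I expect to be the crux''), and it really is the crux: what you would be proving there is, in effect, Margolis's Theorem~13.12 in the case at hand.

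Concretely, the tower $\bigl(\Hom_{A(n)}(P_\bullet,F)\bigr)_n$ has \emph{injective} transition maps (restriction of scalars only shrinks the Hom sets, and $\Hom_A=\bigcap_n\Hom_{A(n)}$), so the Milnor $\varprojlim$--$\varprojlim^1$ sequence for towers of complexes, which requires surjective (or fibration-like) structure maps, does not apply as stated; ``checking a Mittag--Leffler condition degreewise'' is not available in the form you invoke it. What one actually has to do is take a cocycle $f\in\Hom_A(P_s,F)$, which cobounds over every $A(n)$ separately, and produce a \emph{single, $A$-linear} cobounding element; the individual $g_n\in\Hom_{A(n)}(P_{s-1},F)$ need not be compatible, and organising them (using the growth of the connectivity of $A/\!/A(n)$ and the bounded-belowness of $F$) is precisely the delicate content of the self-injectivity theorem. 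Likewise your d\'evissage $\Ext_A^s(\k,F)\cong\Ext_A^{s-1}(L_n,F)$ does not close up: $L_n$ being highly connected does not force $\Hom_A(L_n,F)$ to vanish in a fixed internal degree when $F$ is unbounded above, which is the very difficulty you identify. So as written the proof is incomplete for $s>0$. The efficient fix is to do what the paper does: quote \cite{HRM:Book}*{theorem~13.12} (bounded below projectives over a $P$-algebra are injective), after which only your Step~2 is needed. If you want a self-contained argument, you must supply the missing compatibility/limit argument, which is a substantial piece of work rather than a routine verification.
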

\begin{proof}
By~\cite{HRM:Book}*{theorem~13.12}, bounded
below projective $A$-modules are also injective,
so $\Ext_A^s(M,F)=0$ when $s>0$, therefore
we only have to show that $\Hom_A(M,F)=0$.
It suffices to prove this for the case~$F=A$.

Suppose that $0\neq\theta\in\Hom_A(M,A)$.
The image of $\theta$ contains a simple
submodule in its top degree, so let
$\theta(x)\neq0$ be in this submodule;
then $a\theta(x)=0$ for every positive
degree element $a\in A$. Now for some~$n$,
$\theta(x)\in A(n)^k$ where $k<\pd(n)$,
and by Poincar\'e duality for~$A(n)$
there exists $z\in A(n)$ for which
$0\neq z\theta(x)\in A(n)^{\pd(n)}$.
This gives a contradiction, hence no
such~$\theta$ can exist.

For a different proof, see
Lorentz~\cite{ML:TourRepThy}*{proposition~10.6},
which shows that an infinite dimensional Hopf
algebra has no non-trivial finite dimensional
left or right ideals.
\end{proof}

A particular concern for us will be the
situation where we have a pair of \Palgebras{}
$B\subseteq A$ with~$B$ a subHopf algebra
of~$A$.
\begin{prop}\label{prop:ExtA/B}
For a pair of P-algebras $B\subseteq A$,
\[
\Ext^*_A(A\otimes_B\k,A) \iso \Ext^*_B(\k,A) = \Hom_B(\k,A) = 0.
\]
\end{prop}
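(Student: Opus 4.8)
The plan is to establish the three equalities from left to right. First I would identify $\Ext^*_A(A\otimes_B\k,A)$ with $\Ext^*_B(\k,A)$ by a change-of-rings argument: since $A$ is free, hence flat, as a left (and right) $B$-module, the induced module $A\otimes_B\k$ has the property that a projective resolution $P_\bullet\to\k$ of $\k$ over $B$ yields a projective resolution $A\otimes_B P_\bullet\to A\otimes_B\k$ over $A$. Applying $\Hom_A(-,A)$ and using the adjunction $\Hom_A(A\otimes_B N, A)\iso\Hom_B(N,A|_B)$ gives an isomorphism of cochain complexes $\Hom_A(A\otimes_B P_\bullet, A)\iso\Hom_B(P_\bullet, A)$, and passing to cohomology yields the first isomorphism $\Ext^*_A(A\otimes_B\k,A)\iso\Ext^*_B(\k,A)$. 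One must check that the internal grading is bounded below so that the $\Ext$ groups are well behaved, but since $A$ is of finite type and connected this is automatic.

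Next I would argue that $\Ext^*_B(\k,A)$ is concentrated in cohomological degree $0$. Here I want to invoke Proposition~\ref{prop:Palg-finite} applied to the \Palgebra~$B$: the module $\k$ is a finite $B$-module, and $A$ is a bounded below free $B$-module (by the stated property that a \Palgebra{} is free over any of its subHopf algebras, together with the fact that $B\subseteq A$ is assumed to be such a pair). Hence $\Ext^s_B(\k,A)=0$ for $s>0$, giving the second equality $\Ext^*_B(\k,A)=\Hom_B(\k,A)$.

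Finally, the vanishing $\Hom_B(\k,A)=0$ is precisely the content of (the $\Hom$ part of) Proposition~\ref{prop:Palg-finite} with $M=\k$ and $F=A$ over the \Palgebra~$B$: a nonzero $B$-linear map $\k\to A$ would pick out a nonzero element of $A$ annihilated by all of $B^+$, and the Poincar\'e duality argument inside a suitable $B(n)$ (choosing $n$ large enough that the element lies in $B(n)$ in a degree below $\pd_B(n)$) produces a contradiction. So all three equalities hold.

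The step I expect to be the main obstacle is the first isomorphism, specifically making sure the freeness of $A$ over $B$ is being used correctly on the correct side: $\Ext_A$ here is computed with a resolution of the first variable by projective \emph{left} $A$-modules, and one needs $A$ flat as a \emph{right} $B$-module for $A\otimes_B(-)$ to preserve exactness, while the adjunction isomorphism $\Hom_A(A\otimes_B N, A)\iso\Hom_B(N, A)$ is the standard induction-restriction adjunction. Both hold because a \Palgebra{} is free as a left \emph{and} as a right module over a subHopf algebra, so the argument goes through; it is just a matter of bookkeeping the sides carefully. The rest is a direct appeal to Proposition~\ref{prop:Palg-finite}.
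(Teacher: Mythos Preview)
Your proposal is correct and follows essentially the same approach as the paper: invoke freeness of $A$ over $B$ (which the paper attributes to Milnor--Moore) to obtain the change-of-rings isomorphism, then appeal to Proposition~\ref{prop:Palg-finite} over the \Palgebra~$B$ with $M=\k$ and $F=A$ for the remaining vanishing. The paper's proof is simply a terser version of what you wrote.
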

\begin{proof}
First we recall a classic result of Milnor \&
Moore~\cite{M&M:HopfAlg}*{proposition~4.9}:
$A$ is free as a left or right $B$-module. This
guarantees the change of rings isomorphism (which
is valid for any subalgebra where~$A$ is a
flat~$B$-module) and the second isomorphism
follows from Proposition~\ref{prop:Palg-finite}.
\end{proof}

Since a \Palgebra{} $A$ is coherent, its finitely
presented modules are its coherent modules, and
they form a full abelian subcategory $\Mod^{\mathrm{coh}}_A$
of~$\Mod_A$ with finite limits and colimits (in
particular it has kernels, cokernels and images).
A coherent $A$-module~$M$ admits a finite presentation
\[
\xymatrix{
A^{\oplus k}\ar[r]^{\pi}& A^{\oplus \ell}\ar[r]& M\ar[r]&0
}
\]
which can be defined over some $A(n)$, i.e.,
there is a finite presentation
\[
\xymatrix{
A(n)^{\oplus k}\ar[r]^{\pi'}&A(n)^{\oplus \ell}\ar[r]& M'\ar[r]&0
}
\]
of $A(n)$-modules and a commutative diagram
of $A$-modules
\[
\xymatrix{
A\otimes_{A(n)}A(n)^{\oplus k}\ar[r]^{\Id\otimes\pi'}\ar@{<->}[d]^\iso
& A\otimes_{A(n)}A(n)^{\oplus \ell}\ar[r]\ar@{<->}[d]^\iso
& A\otimes_{A(n)}M'\ar[r]\ar@{<->}[d]^\iso &0 \\
A^{\oplus k}\ar[r]^{\pi}& A^{\oplus \ell}\ar[r]& M\ar[r]&0
}
\]
with exact rows. It is standard that every
coherent $A$-module admits a resolution by
finitely generated free modules. It is also
true that a homomorphism between $M\to N$
coherent modules is induced from a homomorphism
between finitely generated modules over some
$A(m)$.

For a \Palgebra{} we also have injective
resolutions by finitely generated free
modules.
\begin{prop}\label{prop:Coherent-InjResn}
Let $M$ be a coherent module over a \Palgebra{}
$A$. Then $M$ admits an injective resolution
by finitely generated free modules.
\end{prop}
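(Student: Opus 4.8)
The plan is to construct the resolution one step at a time; everything reduces to a single claim: \emph{every coherent $A$-module embeds into a finitely generated free $A$-module}. Granting this, iteration is automatic. Indeed, if $M\hookrightarrow F^0$ with $F^0$ finitely generated free, then $F^0$ is bounded below (a finite direct sum of degree shifts of the connected finite type algebra $A$), hence injective by~\cite{HRM:Book}*{theorem~13.12}; the cokernel $C^1=F^0/M$ is the cokernel, taken in the abelian category $\Mod^{\mathrm{coh}}_A$, of a morphism of coherent modules, hence coherent, and it is again bounded below, so the claim applied to $C^1$ produces a finitely generated free $F^1$ with $C^1\hookrightarrow F^1$, and so on. Splicing the short exact sequences $0\to C^j\to F^j\to C^{j+1}\to 0$ (with $C^0=M$) yields an exact complex $0\to M\to F^0\to F^1\to\cdots$ of the required form.

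To prove the embedding claim I would descend to one of the Poincar\'e duality algebras $A(n)$. A coherent module $M$ carries a finite presentation defined over some $A(n)$, so $M\iso A\otimes_{A(n)}M'$ for a finite $A(n)$-module $M'$. Now $A(n)$ is a connected finite dimensional Poincar\'e duality algebra, hence a graded Frobenius algebra: the duality $D=\Hom_\k(-,\k)$ exchanges finite left and right $A(n)$-modules and satisfies $D(A(n))\iso A(n)$ as one-sided $A(n)$-modules, up to a degree shift. Choosing a surjection of right $A(n)$-modules $A(n)^{\oplus r}\twoheadrightarrow D(M')$ and applying $D$ gives an embedding $M'\iso D(D(M'))\hookrightarrow D(A(n))^{\oplus r}\iso A(n)^{\oplus r}$ of left $A(n)$-modules. (Equivalently, $A(n)$ is self-injective Artinian, so the injective hull of the finite module $M'$ is finite, hence finitely generated injective, hence finitely generated projective, hence finitely generated free by graded Nakayama over the local ring $A(n)$.) Finally, $A$ is free, hence flat, as a right $A(n)$-module, so applying $A\otimes_{A(n)}(-)$ preserves this monomorphism:
\[
M\iso A\otimes_{A(n)}M'\hookrightarrow A\otimes_{A(n)}A(n)^{\oplus r}\iso A^{\oplus r},
\]
with $A^{\oplus r}$ finitely generated free.

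The main obstacle is precisely this embedding step: it is here that one genuinely uses the Poincar\'e duality (equivalently, Frobenius, self-injective) structure of the $A(n)$ together with the flatness of $A$ over $A(n)$; everything else is formal manipulation with coherence and~\cite{HRM:Book}*{theorem~13.12}. The point that needs a little care is ensuring the objects produced over $A(n)$ stay finitely generated, which rests on the finite dimensionality of $A(n)$ and on graded Nakayama; once finiteness is in hand, bounding below the resulting free $A$-modules is immediate.
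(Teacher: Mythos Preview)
Your proof is correct and follows essentially the same route as the paper: descend $M\iso A\otimes_{A(n)}M'$ to a finite $A(n)$-module, embed $M'$ into a finitely generated free $A(n)$-module using the Poincar\'e duality/Frobenius structure of $A(n)$, extend by flatness of $A$ over $A(n)$, and iterate using that bounded below free $A$-modules are injective. The only cosmetic difference is that the paper justifies the embedding $M'\hookrightarrow A(n)^{\oplus r}$ by saying it is obvious for simple modules and follows by induction on dimension, whereas you give the (cleaner) Frobenius duality argument or the equivalent injective-hull argument.
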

\begin{proof}
By~\cite{HRM:Book}*{theorem~13.12}, bounded
below projective $A$-modules are injective.

For some $n$, $M\iso A\otimes_{A(n)}M'$
where~$M'$ is a finitely generated $A(n)$-module.
Since~$A(n)$ is a Poincar\'e duality algebra,
it is standard that~$M'$ admits a monomorphism
$M'\to J'$ into a finitely generated free
$A(n)$-module which is also injective
(this is obvious for a simple module and
can be proved by induction on dimension).
By flatness the composition
\[
\xymatrix{
M\ar[r]_(.3){\iso}\ar@/^15pt/[rr] & A\otimes_{A(n)}M'\ar[r]& A\otimes_{A(n)}J'
}
\]
is a monomorphism of $A$-modules into a
finitely generated free module with coherent
cokernel. Iterating this we can build a resolution
of the stated form and since~$A$ is self-injective
this is an injective resolution.
\end{proof}

Our next result summarises the properties of
coherent modules.
\begin{prop}\label{prop:Coherent-AbCat}
The coherent modules over a \Palgebra{} $A$ form
a full subcategory\/ $\Mod^{\mathrm{coh}}_A$ of\/
$\Mod_A$ with enough projectives and injectives,
and all finite limits and colimits.
\end{prop}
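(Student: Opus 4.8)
The plan is to verify each of the four stated properties by reducing to the facts already established in Propositions~\ref{prop:Coherent-InjResn} and the surrounding discussion, together with the standard formal properties of finitely presented modules over a coherent ring. First I would recall that since $A$ is coherent, its finitely presented modules are precisely the coherent ones, and the full subcategory they span is abelian: it is closed under kernels, cokernels, extensions and finite direct sums inside $\Mod_A$. This is the content of the discussion immediately preceding the statement (with the presentation defined over some $A(n)$), so the ``full abelian subcategory with finite limits and colimits'' part is essentially already in hand; I would just assemble it into a single sentence, citing Bourbaki~\cite{Bourbaki:HomAlg}*{ex.~\S3.10} or Cohen~\cite{JMC:Coherent} for the general coherence facts.

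Next, ``enough projectives'' is immediate: every coherent $A$-module admits a surjection from a finitely generated free module (this is recorded in the text), and a finitely generated free module is coherent because $A$ is coherent; hence finitely generated projectives suffice. For ``enough injectives'' I would invoke Proposition~\ref{prop:Coherent-InjResn} directly: every coherent $M$ embeds in a finitely generated free module $A^{\oplus \ell}$, and by~\cite{HRM:Book}*{theorem~13.12} bounded below projective (in particular finitely generated free) $A$-modules are injective, so $A^{\oplus\ell}$ is an injective object, and it is coherent. Thus $\Mod^{\mathrm{coh}}_A$ has enough injectives, realised by finitely generated free modules.

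The one point requiring a little care is that ``enough injectives'' should be interpreted \emph{within} the subcategory $\Mod^{\mathrm{coh}}_A$: one needs the embedding $M\hookrightarrow J$ to have $J$ coherent, and more importantly that $J$ is injective \emph{as an object of $\Mod^{\mathrm{coh}}_A$}, i.e.\ that $\Hom_A(-,J)$ is exact on short exact sequences of coherent modules. This follows because $J$ is injective in the larger category $\Mod_A$, so exactness holds for all short exact sequences, a fortiori for coherent ones; and the cokernel of $M\hookrightarrow J$ is coherent by the abelian-category property, so one can iterate to build resolutions staying inside $\Mod^{\mathrm{coh}}_A$. I expect this bookkeeping---checking that injectivity and projectivity relative to the subcategory are inherited from $\Mod_A$---to be the only mildly subtle step; everything else is a direct appeal to the results already proved. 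I would close by remarking that $A$ itself, being self-injective and coherent, is simultaneously a projective and injective object of $\Mod^{\mathrm{coh}}_A$.
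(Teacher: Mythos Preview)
Your proposal is correct and matches the paper's approach: the paper presents this proposition explicitly as a summary (``Our next result summarises the properties of coherent modules'') without giving a separate proof, precisely because the ingredients---the abelian-subcategory property from general coherence theory, free resolutions for enough projectives, and Proposition~\ref{prop:Coherent-InjResn} for enough injectives---have all been set up in the preceding discussion. Your assembly of these pieces, including the observation that injectivity and projectivity in $\Mod_A$ restrict to the subcategory, is exactly the intended content.
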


We can generalise Proposition~\ref{prop:Palg-finite}.
\begin{prop}\label{prop:Finite->coherent}
Let $M$ be a finite $A$-module and $N$ a coherent
$A$-module. Then
\[
\Ext_A^*(M,N) = 0.
\]
\end{prop}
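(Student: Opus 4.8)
The plan is to reduce everything to the two facts already in hand: that $\Ext_A^*(M,F)$ vanishes for a bounded below free module $F$ (Proposition~\ref{prop:Palg-finite}), and that a coherent module over a \Palgebra{} admits an injective resolution by finitely generated free modules (Proposition~\ref{prop:Coherent-InjResn}).

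Concretely, I would use Proposition~\ref{prop:Coherent-InjResn} to choose an injective resolution
\[
0\longrightarrow N\longrightarrow F^0\longrightarrow F^1\longrightarrow F^2\longrightarrow\cdots
\]
of the coherent module $N$ in which every $F^i$ is a finitely generated free $A$-module. Since $A=\bigcup_n A(n)$ is connected, any finitely generated free $A$-module is bounded below, so each $F^i$ is bounded below free and Proposition~\ref{prop:Palg-finite} applies to it; in particular $\Hom_A(M,F^i)=0$ for all $i$. Computing $\Ext$ from this injective resolution, $\Ext_A^n(M,N)=H^n\big(\Hom_A(M,F^\bullet)\big)$ where $F^\bullet$ is the deleted complex $F^0\to F^1\to\cdots$; but this cochain complex is identically zero, hence so is its cohomology, and $\Ext_A^*(M,N)=0$.

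There is no real obstacle: the content sits entirely in Propositions~\ref{prop:Palg-finite} and~\ref{prop:Coherent-InjResn}, and the one point meriting care is that the resolving objects are genuinely \emph{bounded below} free modules, so that Proposition~\ref{prop:Palg-finite} applies term by term --- which is immediate from the connectivity of $A$. If one prefers to avoid quoting the injective resolution, the same conclusion follows by dimension shifting: embed $N$ into a finitely generated free (hence, by self-injectivity, injective) module $F$ with coherent cokernel $N'$ as in the proof of Proposition~\ref{prop:Coherent-InjResn}, observe $\Ext_A^*(M,F)=0$, and extract from the long exact $\Ext_A^*(M,-)$ sequence both $\Hom_A(M,N)\hookrightarrow\Hom_A(M,F)=0$ and $\Ext_A^{s+1}(M,N)\iso\Ext_A^{s}(M,N')$; induction on the homological degree, using that $N'$ is again coherent, then gives the result.
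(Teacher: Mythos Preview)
Your proof is correct and follows essentially the same route as the paper: both arguments reduce to Propositions~\ref{prop:Palg-finite} and~\ref{prop:Coherent-InjResn}, computing $\Ext_A^*(M,N)$ from a resolution of $N$ by finitely generated free $A$-modules on which $\Hom_A(M,-)$ vanishes. The only cosmetic difference is that the paper phrases the computation in terms of $\Hom_A(M,-)$-acyclic objects (citing Weibel), whereas you use directly that the resolution is by injectives; your dimension-shifting alternative is also alluded to in the paper's reference.
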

\begin{proof}
The left exact functor $\Hom_A(M,-)$ has the right
derived functors $\Ext_A^*(M,-)$. By
Proposition~\ref{prop:Palg-finite}, for each finitely
generated free module~$F$, $\Ext_A^*(M,F) = 0$. This
means that~$F$ is $\Hom_A(M,-)$-acyclic, and it is
well-known that these derived functors can be computed
using resolutions by such modules; see~\cite{CAW:HomAlg}
on $F$-acyclic objects and dimension shifting.

Now every coherent $A$-module $N$ admits a resolution
$0\to N\to J^*$ where each $J^s$ is a finitely generated
free module and these are $\Hom_A(M,-)$-acyclic. Therefore
$\Ext_A^*(M,N) = 0$.
%
%
%
\end{proof}

For example, every left $A$-module of the form
\[
A/\!/A(n) = A\otimes_{A(n)}\k \iso A/AA(n)^+
\]
is coherent so it admits such an injective
resolution and then $\Ext_A^s(\k,A/\!/A(n)) = 0$.

Here is a more general statement.
\begin{prop}\label{prop:IndFiniteB->coherent}
Let $A$ be a $P$-algebra and suppose that
$B\subseteq A$ is a subalgebra which is also
a $P$-algebra and that~$A$ is a free left
$B$-module. Let~$L$ be a finite $B$-module
and~$N$ a coherent $A$-module. Then
\[
\Ext^*_A(A\otimes_B L,N) = 0.
\]
\end{prop}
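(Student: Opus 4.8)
The plan is to reduce to Proposition~\ref{prop:Palg-finite} over $B$ by combining an injective resolution of $N$ with the extension-of-scalars adjunction, rather than attempting the more obvious change-of-rings isomorphism $\Ext^*_A(A\otimes_B L,N)\cong\Ext^*_B(L,N)$. The latter would require $A$ to be flat as a \emph{right} $B$-module and $N$ restricted to $B$ to be coherent over $B$, and neither is available here since $B$ is only assumed to be a subalgebra (not a subHopf algebra), so that Milnor \& Moore does not apply.

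First I would invoke Proposition~\ref{prop:Coherent-InjResn} to fix an injective resolution $0\to N\to J^0\to J^1\to\cdots$ in which every $J^s$ is a finitely generated free $A$-module. Since $\Ext^*_A(A\otimes_B L,-)$ may be computed from any injective resolution of its second argument, this gives $\Ext^*_A(A\otimes_B L,N)\cong H^*\bigl(\Hom_A(A\otimes_B L,J^\bullet)\bigr)$.

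Next, for any left $A$-module $J$ the extension-of-scalars/restriction adjunction provides a natural isomorphism $\Hom_A(A\otimes_B L,J)\cong\Hom_B(L,J)$, where on the right $J$ is regarded as a left $B$-module by restriction along $B\subseteq A$ (explicitly, $\Hom_A(A,J)\cong J$ via evaluation at~$1$). Applying this degreewise identifies the cochain complex above with $\Hom_B(L,J^\bullet|_B)$. Now I would observe that, because $A$ is a free left $B$-module and is connected of finite type (hence bounded below), the restriction $J^s|_B$ of each finitely generated free $A$-module $J^s$ is a bounded below free left $B$-module. Since $B$ is a $P$-algebra and $L$ is a finite $B$-module, Proposition~\ref{prop:Palg-finite} yields $\Hom_B(L,J^s|_B)=0$ for every $s$. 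Hence the complex $\Hom_B(L,J^\bullet|_B)$ is identically zero, and therefore $\Ext^*_A(A\otimes_B L,N)=0$.

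The only real decision in the argument is the first one: resolving $N$ injectively by finitely generated \emph{free} modules — available by Proposition~\ref{prop:Coherent-InjResn} precisely because $N$ is coherent over the $P$-algebra $A$ — so that the adjunction lands in $\Hom_B(L,-)$ applied to a free $B$-module, where Proposition~\ref{prop:Palg-finite} applies verbatim. Everything else (the adjunction, and the bookkeeping that $J^s|_B$ remains bounded below and free over~$B$) is routine; note in particular that no flatness of $A$ over $B$ on the right is needed, only the stated hypothesis that $A$ is free as a left $B$-module.
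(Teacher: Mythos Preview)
Your proof is correct and follows essentially the same route as the paper's: resolve $N$ injectively by finitely generated free $A$-modules via Proposition~\ref{prop:Coherent-InjResn}, apply the adjunction $\Hom_A(A\otimes_B L,J^s)\cong\Hom_B(L,J^s)$, and observe each term vanishes. The only cosmetic differences are that the paper opens by asserting the change-of-rings isomorphism $\Ext^*_A(A\otimes_B L,N)\cong\Ext^*_B(L,N)$ (which, as you note, is not actually needed for the argument and whose justification under the stated hypotheses is delicate), and that the paper kills $\Hom_B(L,J^s)$ by first factoring through finitely many $B$-summands rather than invoking Proposition~\ref{prop:Palg-finite} directly on the bounded below free $B$-module $J^s|_B$ as you do; both justifications are valid.
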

\begin{proof}
We have
\[
\Ext^*_A(A\otimes_B L,N) \iso \Ext^*_B(L,N).
\]
By Proposition~\ref{prop:Coherent-InjResn},
$N$ admits an injection resolution
\[
0\to N\to J^*
\]
by finitely generated free $A$-modules.
For each $s\geq0$,
\[
\Hom_A(A\otimes_B L,J^s)
\iso \Hom_B(L,J^s)
= 0
\]
since a $B$-module homomorphism $L\to J^s$
must factor through finitely many $B$-summands
and therefore it is trivial. Therefore the
cohomology $\Ext^*_B(L,N)$ is trivial. A
similar argument also applies to the case
where~$N$ is a coproduct of coherent
$A$-modules.
\end{proof}

\subsection*{Pseudo-coherent modules over
a coherent ring}\label{sec:pcModules}

In Bourbaki~\cite{Bourbaki:HomAlg}*{ex.~\S3.10},
as well as coherent modules, pseudo-coherent
modules are considered, where a module is
\emph{pseudo-coherent} if every finitely
generated submodule is finitely presented
(so over a coherent ring it is coherent).
The reader is warned that pseudo-coherent
is sometimes used in a different sense; for
example, over a coherent ring the definition
of Weibel~\cite{CAW:Ktheory}*{example~II.7.1.4}
corresponds to our coherent.

Examples of pseudo-coherent modules over
a coherent ring~$A$ include coproducts
of coherent modules such as the finitely
related modules defined by
Lam~\cite{TYL:LectModules&Rings}*{chapter~2\S4}.
However pseudo-coherent modules do not form
a full abelian subcategory of~$\Mod_A$ since
cokernels of homomorphisms between pseudo-coherent
modules need not be pseudo-coherent. Nevertheless,
pseudo-coherent modules do occur quite commonly
when working with coherent modules over $P$-algebras,
for example as tensor products over a Hopf
algebra.

%
%

\begin{lem}\label{lem:pc-colimit}
Let $A$ be a ring and $M$ a pseudo-coherent
$A$-module. Then~$M$ is the union of its
coherent submodules and therefore their
colimit.
\end{lem}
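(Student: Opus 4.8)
The plan is to show directly that every element of $M$ lies in some coherent submodule, and that the coherent submodules form a directed system under inclusion, so that $M$ is their colimit (the directed union). First I would take an arbitrary $x\in M$ and consider the cyclic submodule $Ax\subseteq M$. Since $M$ is pseudo-coherent, every finitely generated submodule is finitely presented, so $Ax$ is finitely presented; over any ring a finitely presented module is finitely generated, and — this is the point where the pseudo-coherence hypothesis is used a second time, essentially Bourbaki's characterisation — a finitely generated submodule of a pseudo-coherent module is itself pseudo-coherent (every finitely generated submodule of $Ax$ is a finitely generated submodule of $M$, hence finitely presented). A module that is both finitely presented and pseudo-coherent is coherent, so $Ax$ is a coherent submodule of $M$ containing $x$. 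Hence $M=\bigcup_{x\in M} Ax$ is covered by coherent submodules.

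Next I would check that the poset of coherent submodules of $M$ is directed: given coherent submodules $N_1,N_2\subseteq M$, their sum $N_1+N_2$ is a finitely generated submodule of $M$ (generated by finite generating sets of $N_1$ and $N_2$), hence finitely presented by pseudo-coherence, hence coherent by the same finitely-presented-plus-pseudo-coherent argument as above, and it contains both $N_1$ and $N_2$. Therefore the coherent submodules of $M$, ordered by inclusion with the inclusion maps as structure morphisms, form a directed system.

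Finally I would identify $M$ with the colimit of this system. The natural map $\colim N \to M$, where the colimit is over all coherent submodules $N$, is surjective because every $x\in M$ lies in some $N$ (namely $Ax$), and injective because if $x\in N$ maps to $0\in M$ then already $x=0$ in $N\subseteq M$; for a directed colimit of submodules along inclusions this is exactly the statement that the colimit is the union $\bigcup_N N = M$.

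I do not expect a serious obstacle here; the only thing to be careful about is not conflating "pseudo-coherent" with the variant meanings flagged in the preceding paragraph of the paper, and making sure the two uses of pseudo-coherence (passing to $Ax$, and passing to $N_1+N_2$) are both legitimate — both follow from the definition since any finitely generated submodule of a submodule of $M$ is a finitely generated submodule of $M$. One might also remark that no coherence hypothesis on the ring $A$ is needed for this lemma, which is consistent with its statement ("Let $A$ be a ring"); coherence of $A$ only enters when one wants "finitely presented $\Leftrightarrow$ coherent" globally rather than for these particular submodules.
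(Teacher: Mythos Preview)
Your argument is correct and follows the same idea as the paper's proof, which consists of the single sentence ``Every element $m\in M$ generates a cyclic submodule which is coherent.'' You simply spell out in detail why the cyclic submodule is coherent (finitely presented plus pseudo-coherent), and you add the directedness and colimit identification that the paper leaves implicit.
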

\begin{proof}
Every element~$m\in M$ generates a cyclic
submodule which is coherent.
\end{proof}

\begin{lem}\label{lem:pc-extended}
Let $A$ be a coherent ring and $B$ an Artinian
subring where~$A$ is flat as a right $B$-module.
Then every extended $A$-module $A\otimes_BN$
is pseudo-coherent.
\end{lem}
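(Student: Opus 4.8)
The plan is to reduce the statement to the case of an extended module coming from a module of finite length over $B$, at which point coherence of $A$ does the work. Let $P\subseteq A\otimes_B N$ be a finitely generated $A$-submodule, say generated by $x_1,\dots,x_k$, and write each $x_i=\sum_j a_{ij}\otimes n_{ij}$ as a finite sum with $a_{ij}\in A$ and $n_{ij}\in N$. Let $N_0\subseteq N$ be the $B$-submodule generated by the finitely many elements $n_{ij}$ occurring in these expressions; then every $x_i$, and hence all of $P$, lies in the image of $A\otimes_B N_0$ inside $A\otimes_B N$.

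First I would observe that, since $B$ is Artinian and $N_0$ is finitely generated over $B$, the module $N_0$ has finite length $\len_B N_0<\infty$, and in particular it is a finitely presented $B$-module: the kernel of any surjection $B^{\oplus t}\to N_0$ again has finite length, hence is finitely generated. Choosing a finite presentation $B^{\oplus s}\to B^{\oplus t}\to N_0\to 0$ and applying the right exact functor $A\otimes_B(-)$ yields an exact sequence $A^{\oplus s}\to A^{\oplus t}\to A\otimes_B N_0\to 0$, so $A\otimes_B N_0$ is a finitely presented $A$-module. Because $A$ is coherent, finitely presented $A$-modules are coherent, so $A\otimes_B N_0$ is a coherent $A$-module.

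Next I would invoke the flatness of $A$ as a right $B$-module: the inclusion $N_0\hookrightarrow N$ then induces a monomorphism $A\otimes_B N_0\hookrightarrow A\otimes_B N$, so $P$ may be regarded as a finitely generated $A$-submodule of the coherent module $A\otimes_B N_0$. Since over a coherent ring every finitely generated submodule of a coherent (equivalently, finitely presented) module is again coherent, $P$ is coherent, in particular finitely presented. As $P$ was an arbitrary finitely generated submodule of $A\otimes_B N$, this shows $A\otimes_B N$ is pseudo-coherent.

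The argument is mostly bookkeeping, and I do not expect a serious obstacle; the two genuine inputs are the flatness hypothesis, which is exactly what lets us replace $N$ by the finite-length submodule $N_0$ while retaining an embedding of the extended modules, and the coherence of $A$, which legitimises the passage from ``finitely generated submodule of a finitely presented module'' to ``finitely presented''. The Artinian hypothesis on $B$ is used only to guarantee that finitely generated $B$-modules such as $N_0$ are finitely presented.
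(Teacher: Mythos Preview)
Your proof is correct and follows essentially the same route as the paper's: pass to the finitely generated $B$-submodule $N_0$ generated by the right-hand tensor factors, use that Artinian implies Noetherian to get $N_0$ finitely presented, extend to a finitely presented (hence coherent) $A$-module $A\otimes_B N_0$, and conclude via coherence of~$A$. You are in fact slightly more careful than the paper about where flatness is actually needed---namely to ensure that $A\otimes_B N_0\hookrightarrow A\otimes_B N$ is an embedding so that $P$ sits inside the coherent module---whereas the finite presentation of $A\otimes_B N_0$ only uses right exactness of the tensor product.
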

\begin{proof}
Let $U\subseteq A\otimes_BN$ be a finitely
generated submodule. Taking a finite generating
set and expressing each element as a sum of basic
tensors we find that $U\subseteq A\otimes_BU'$
where $U'\subseteq N$ is a finitely generated
submodule. As~$B$ is Artinian and so Noetherian,~$U'$
is finitely presented, so by flatness of~$A$,
$A\otimes_BU'$ is a finitely presented $A$-module.
Since $A$ is coherent,~$U$ is also finitely
presented.
\end{proof}

An important special case of this occurs when~$A$
is a coherent algebra over a field~$\k$ and~$B$
is a finite dimensional subalgebra. If~$N$ is
a $B$-module then it is locally finite, i.e.,
every element is contained in a finite
dimensional submodule.

In general, the tensor product of two coherent
modules over a Hopf algebra that is a $P$-algebra
is not a coherent module. However, as we will
soon see, it turns out that it is pseudo-coherent.

\begin{assump}\label{assump:TensorProds}
We will assume from now on that~$A$ is a cocommutative
Hopf algebra over a field~$\k$, and $B\subseteq A$
a subHopf algebra where~$A$ is $B$-flat as a left
and right $B$-module.
\end{assump}

Now given two left $A$-modules $L$ and~$M$,
their tensor product~$L\otimes M$ is a left
$A$-module with the diagonal action given by
\[
a\cdot(\ell\otimes m)
= \sum_i a'_i\ell\otimes a''_im,
\]
where the coproduct on~$a$ is
\[
\psi a = \sum_i a'_i\otimes a''_i.
\]

In particular, given a left $A$-module~$M$ and
a left $B$-module~$N$, the tensor product of~$M$
and $A\otimes_B N$ is a left $A$-module
$M\otimes(A\otimes_B N)$. There is an isomorphism
of $A$-modules
\begin{equation}\label{eq:TwistingIso}
M\otimes(A\otimes_B N)
\xrightarrow[\iso]{\Theta}
A\otimes_B(M\otimes N);
\quad
m\otimes(a\otimes n)\mapsto
\sum_i a'_i\otimes(\bar{a''_i}m\otimes n)
\end{equation}
where $\bar{x}=\chi(x)$ and $M\otimes N$ is a left
$B$-module with the diagonal action.

A particular instance of this is
\begin{equation}\label{eq:A//B}
(A/\!/B)\otimes(A/\!/B) \iso A\otimes_B(A/\!/B)
\end{equation}
where
\[
A/\!/B=A/AB^+\iso A\otimes_{B}\k.
\]
We will be especially interested in the case
where the $B$-module $A/\!/B$ is a coproduct
of finitely generated modules.

\begin{lem}\label{lem:TensorProd-extended}
Suppose that $A$ is coherent and $B$ is finite
dimensional. Let~$M$ be a left $A$-module and~$N$
a left $B$-module. Then the $A$-module
$M\otimes(A\otimes_BN)$ is pseudo-coherent.
\end{lem}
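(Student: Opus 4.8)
The plan is to combine the twisting isomorphism~\eqref{eq:TwistingIso} with Lemma~\ref{lem:pc-extended}. First I would apply~\eqref{eq:TwistingIso} to rewrite
\[
M\otimes(A\otimes_BN)\iso A\otimes_B(M\otimes N)
\]
as left $A$-modules, where $M\otimes N$ carries the diagonal left $B$-action (restricting the $A$-action on $M$ to $B$ and using the given $B$-action on $N$). This reduces the claim to showing that the extended module $A\otimes_B(M\otimes N)$ is pseudo-coherent.

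Next I would invoke Lemma~\ref{lem:pc-extended} with the subring $B$ in place of the ``$B$'' there: we need $A$ coherent, which is hypothesised; $B$ Artinian, which holds because $B$ is finite dimensional over~$\k$ (hence Artinian); and $A$ flat as a right $B$-module. For the flatness I would cite Assumption~\ref{assump:TensorProds} (or, if one wants to stay within the bare hypotheses of this lemma, the Milnor--Moore freeness result used in the proof of Proposition~\ref{prop:ExtA/B}, since $B$ is a subHopf algebra of $A$). With these three inputs, Lemma~\ref{lem:pc-extended} applied to the $B$-module $M\otimes N$ gives immediately that $A\otimes_B(M\otimes N)$ is pseudo-coherent, and transporting along the isomorphism $\Theta$ finishes the proof.

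The only point requiring a moment's care is the hypothesis of~\eqref{eq:TwistingIso}: that isomorphism was stated under Assumption~\ref{assump:TensorProds} (cocommutative Hopf algebra $A$, subHopf algebra $B$ with $A$ flat on both sides), so strictly speaking the lemma as stated presupposes that ambient setup, and its ``$A$ coherent, $B$ finite dimensional'' clause is the additional hypothesis. I would simply remark that $\Theta$ is the needed isomorphism and that no genuinely new computation is required — the content has all been packaged into~\eqref{eq:TwistingIso} and Lemma~\ref{lem:pc-extended}. There is no real obstacle here; the ``hard part'', such as it is, was already done in establishing the twisting isomorphism and the extended-module lemma, and this statement is their formal consequence.
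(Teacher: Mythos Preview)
Your proposal is correct and follows exactly the paper's approach: apply the twisting isomorphism~\eqref{eq:TwistingIso} to rewrite $M\otimes(A\otimes_BN)\iso A\otimes_B(M\otimes N)$, then invoke Lemma~\ref{lem:pc-extended}. The paper's proof is just these two lines without your additional hypothesis-checking, but the content is identical.
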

\begin{proof}
Using \eqref{eq:TwistingIso},
\[
M\otimes(A\otimes_BN) \iso A\otimes_B(M\otimes N)
\]
which is pseudo-coherent by Lemma~\ref{lem:pc-extended}.
\end{proof}

\begin{cor}\label{cor:TensorProd-extended}
Suppose that $A$ is a $P$-algebra and that
$M_1$ and $M_2$ are two coherent $A$-modules.
Then $M_1\otimes M_2$ is pseudo-coherent.
\end{cor}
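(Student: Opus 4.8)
The plan is to deduce this directly from Lemma~\ref{lem:TensorProd-extended}, using the fact (recorded after Proposition~\ref{prop:ExtA/B}) that a coherent module over a $P$-algebra is extended up from a finitely generated module over one of the finite dimensional subHopf algebras $A(n)$.

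First I would fix a finite presentation of $M_2$; since it involves only finitely many generators and relations, it is defined over some $A(n)$, so that $M_2\iso A\otimes_{A(n)}M_2'$ for a finitely generated $A(n)$-module $M_2'$. Here I am using that $A$ is free, hence flat, over each $A(n)$ (part of the standing conventions on $P$-algebras, and exactly Assumption~\ref{assump:TensorProds} with $B=A(n)$), and that each $A(n)$ is a connected finite dimensional cocommutative Hopf algebra, in particular finite dimensional.

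Then I would apply Lemma~\ref{lem:TensorProd-extended} with $B=A(n)$, taking $M=M_1$ (any left $A$-module suffices here) and $N=M_2'$: since $A$ is coherent, being a $P$-algebra, and $A(n)$ is finite dimensional, the lemma gives that
\[
M_1\otimes M_2
\iso M_1\otimes\bigl(A\otimes_{A(n)}M_2'\bigr)
\]
is pseudo-coherent, which is the assertion.

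There is essentially no obstacle here beyond invoking the extension description of coherent modules; the one point worth checking is that a single $n$ suffices for $M_2$, which is immediate from the nesting $A(n)\subset A(n+1)$ together with the finiteness of the presentation. It is also worth noting in passing that only one of the two factors needs to be coherent for the argument to run.
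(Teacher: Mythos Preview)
Your proof is correct and follows essentially the same approach as the paper: express one of the two coherent modules as $A\otimes_{A(n)}M'$ for some finite dimensional $A(n)$ and invoke Lemma~\ref{lem:TensorProd-extended}. The only cosmetic difference is that the paper extends $M_1$ rather than $M_2$, and your closing observation that only one factor need be coherent is a valid strengthening.
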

\begin{proof}
Every coherent $A$-module is induced from
a finitely generated $A(n)$-module for some~$n$.
By choosing a large enough~$n$ we can assume
that $M_1\iso A\otimes_{A(n)}M_1'$ for some
finitely generated $A(n)$-module~$M_1'$. Then
\[
M_1\otimes M_2 \iso A\otimes(M_1'\otimes M_2),
\]
so it is a pseudo-coherent $A$-module.
\end{proof}

Pseudo-coherence itself is probably of limited
practical use, but stronger versions are perhaps
more likely to be of use for computational
purposes.

\begin{defn}\label{defn:spc}{\ }
\begin{itemize}
\item
A module over a coherent ring is \emph{strongly
pseudo-coherent\/} if it is a coproduct of
coherent modules.
\item
A module $M$ over a $P$-algebra~$A$ is
\emph{$n$-strongly pseudo-coherent} if
\[
M\iso A\otimes_{A(n)}M'
\]
where the $A(n)$-module~$M'$ is a coproduct
of finitely generated $A(n)$-modules.
\end{itemize}
\end{defn}

Over a coherent ring every free module is strongly
pseudo-coherent as is every finitely related
module since it is the sum of a coherent module
and a free module.

Of course an $n$-strongly pseudo-coherent module
over a $P$-algebra~$A$ is strongly pseudo-coherent.
For a given~$n$, the cyclic $A$-module
$A/\!/A(n)=A/AA(n)^+\iso A\otimes_{A(n)}\k$ can
be viewed as an $A(n)$-module, and by~\eqref{eq:A//B},
\[
A/\!/A(n)\otimes A/\!/A(n) \iso A\otimes_{A(n)}A/\!/A(n).
\]
If $A/\!/A(n)$ is a coproduct of finitely generated
(i.e., finite dimensional) $A(n)$-modules then
$A/\!/A(n)\otimes A/\!/A(n)$ is $n$-strongly
pseudo-coherent. This occurs when~$A$ is the Steenrod
algebra for a prime~$p$.
\begin{examp}\label{examp:StA-BGitsplitting}
When $A=\StA$ is the mod~$2$ Steenrod algebra it
is known that there is a coproduct decomposition
of the $\StA(n)$-module $\StA/\!/\StA(n)$ into
finite dimensional modules which are generalised
Brown-Gitler modules. For details see
Behrens et al~\cite{MB-MAH-MJH-MM:v2to32}*{corollary~5.6}.
\end{examp}

\section{The dual of a \Palgebra{} and its comodules}
\label{sec:DualP-algebras}

The theory of \Palgebras{} can be dualised: given
a finite type \Palgebra~$A$, we define its (graded)
dual~$A_*$ by setting $A_n=\Hom_\k(A^n,\k)$ and
making this a commutative Hopf algebra by dualising
the structure maps of~$A$ (doing this the finite
type condition is essential). We will refer to
the dual Hopf algebra of a (finite type)
\Palgebra{} as a \emph{$P_*$-algebra}; although
this is not standard terminology it seems appropriate 
and convenient.

When working with comodules over a $P_*$-algebra~$A_*$
we will use homological grading. For left
$A_*$-comodules which are bounded below and
of finite type there is no significant difference
between working with them or their (degree-wise)
duals as $A$-modules. In particular,
\[
 \Cohom_{A_*}(M_*,N_*) \iso \Hom_{A}(N^*,M^*),
\]
where $M^n=\Hom_{\F_2}(M_n,\F_2)$ and $M^*$ is
made into a left $A$-module using the antipode.
More generally,
\[
\Coext^{s,*}_{A_*}(M_*,N_*) \iso \Ext^{s,*}_{A}(N^*,M^*),
\]
where $\Coext^{s,*}_{A_*}(M_*,-)$ denotes the
right derived functor of $\Cohom_{A_*}(M_*,-)$,
which can be computed using extended comodules
which are injective comodules here since we
are working over a field.

Here are the dual versions of
Propositions~\ref{prop:Palg-finite}
and~\ref{prop:ExtA/B}. Recall that a \emph{cofree}
or \emph{extended} $A_*$-comodule is one of the
form $A_*\otimes W_*$ where~$W_*$ is a graded
vector space.
\begin{prop}\label{prop:P*alg-finite}
Suppose that $A_*$ is a $P_*$-algebra. Let~$L_*$
a bounded below cofree $A_*$-comodule and let~$M_*$
be a finite $A_*$-comodule. Then
\[
\Coext_{A_*}^*(L_*,M_*)=0.
\]
\end{prop}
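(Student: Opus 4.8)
The plan is to reduce everything to the dual of Proposition~\ref{prop:Palg-finite} via the duality dictionary already set up in this section. Recall we have the natural isomorphism
\[
\Coext^{s,*}_{A_*}(L_*,M_*) \iso \Ext^{s,*}_{A}(M^*,L^*),
\]
where $M^*$ and $L^*$ are the degree-wise duals regarded as left $A$-modules via the antipode. So the first step is to identify the $A$-modules on the right-hand side: since $L_*$ is a bounded below cofree $A_*$-comodule, say $L_* = A_*\otimes W_*$ with $W_*$ bounded below, its dual $L^*$ is a bounded below \emph{free} $A$-module (dualising an extended comodule gives an extended, hence free, module over the Hopf algebra, and boundedness below is preserved because $A_*$ and $W_*$ are of finite type and bounded below). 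Likewise, since $M_*$ is finite, $M^*$ is a finite $A$-module.

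Once that translation is made, the right-hand side is exactly $\Ext_A^*(M^*,L^*)$ with $M^*$ a finite $A$-module and $L^*$ a bounded below free $A$-module, which vanishes by Proposition~\ref{prop:Palg-finite}. Hence $\Coext^*_{A_*}(L_*,M_*)=0$, as desired.

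The one point that needs a little care --- and which I expect to be the main obstacle, though it is a mild one --- is the interchange of dualisation with the extended/cofree structure and the bookkeeping of gradings: one must check that $\Hom_\k\big((A_*\otimes W_*)^{}, \k\big)$ really is $A$-isomorphic to a free $A$-module of the form $A\otimes_\k W^*$ (with the correct left action coming from the antipode), and that finite type plus bounded below for $A_*$ and $W_*$ guarantees $L^*$ is bounded below so that Proposition~\ref{prop:Palg-finite} genuinely applies. This is the standard finite-type duality between extended comodules and free modules over a graded Hopf algebra; it follows from the fact that $(A_*)^* \iso A$ as Hopf algebras and that $\Hom_\k(-,\k)$ turns the cotensor-type structure map into the module structure map after twisting by the antipode. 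Given these identifications, no further homological input is needed beyond Proposition~\ref{prop:Palg-finite}.

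Alternatively, one could argue directly on the comodule side without dualising: build an injective resolution of $M_*$ by cofree comodules (available since we are over a field), and show $\Cohom_{A_*}(L_*, A_*\otimes V_*)=0$ for any bounded below cofree $L_*$ and any graded vector space $V_*$ --- i.e., that a comodule map from a cofree comodule into a cofree comodule in the relevant range is forced to vanish by a Poincaré-duality argument on the finite sub-Hopf-algebras $A(n)$, exactly mirroring the proof of Proposition~\ref{prop:Palg-finite}. But the dualisation route is shorter and reuses an already-proved statement, so that is the one I would write up.
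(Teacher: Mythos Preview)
Your proof is correct and is precisely the argument the paper intends: the paper states this proposition without proof, introducing it explicitly as the dual version of Proposition~\ref{prop:Palg-finite}, so the implied argument is exactly the duality reduction $\Coext^{*}_{A_*}(L_*,M_*)\iso\Ext^{*}_{A}(M^*,L^*)$ followed by an appeal to Proposition~\ref{prop:Palg-finite}. Your care over the identification of $(A_*\otimes W_*)^*$ with a bounded below free $A$-module under the finite type hypotheses is appropriate and matches the paper's standing conventions.
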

\begin{prop}\label{prop:CoextA/B}
For a surjective morphism of $P_*$-algebras
$A_*\to B_*$,
\[
\Coext^{*,*}_{A_*}(A_*,A_*\square_{B_*}\k)
\iso
\Coext^{*,*}_{B_*}(A_*,\k) = \Cohom_{B_*}(A_*,\k) = 0.
\]
\end{prop}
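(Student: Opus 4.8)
The plan is to obtain Proposition~\ref{prop:CoextA/B} by straightforward dualisation of Proposition~\ref{prop:ExtA/B}, using the identification $\Coext^{s,*}_{A_*}(M_*,N_*)\iso\Ext^{s,*}_A(N^*,M^*)$ recorded above together with the fact that a surjection of $P_*$-algebras $A_*\to B_*$ dualises to an inclusion of $P$-algebras $B=B^*\hookrightarrow A=A^*$ of subHopf algebra type. First I would check that under this dualisation the cotensor product $A_*\square_{B_*}\k$ corresponds, degree-wise dually, to the cyclic module $A\otimes_B\k=A/\!/B$; this is the comodule-theoretic incarnation of the standard fact that the ``$A_*$-primitives killed by the augmentation through $B_*$'' are dual to the quotient $A$ by the left ideal $AB^+$. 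Granting that, the first isomorphism in the statement is the change-of-rings isomorphism of Proposition~\ref{prop:ExtA/B} read backwards, the second is the identification of $\Coext^0$ with $\Cohom$, and the final vanishing $\Cohom_{B_*}(A_*,\k)=\Hom_B(\k,A)=0$ is exactly the content of Proposition~\ref{prop:Palg-finite} applied to the finite module $\k$ and the bounded below free $B$-module $A$ (which is free over $B$ by Milnor--Moore, as used in the proof of Proposition~\ref{prop:ExtA/B}).

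Concretely, the steps in order are: (1) record that the dual of the surjection $A_*\twoheadrightarrow B_*$ is an inclusion $B\hookrightarrow A$ of $P$-algebras with $A$ free as a left or right $B$-module, so the hypotheses of Proposition~\ref{prop:ExtA/B} are met; (2) identify $(A_*\square_{B_*}\k)^*\iso A\otimes_B\k$ as left $A$-modules, and note $A_*^*\iso A$ as the coefficient comodule/module; (3) feed these through $\Coext^{*,*}_{A_*}(A_*,A_*\square_{B_*}\k)\iso\Ext^{*,*}_A\bigl((A_*\square_{B_*}\k)^*,A\bigr)\iso\Ext^{*,*}_A(A\otimes_B\k,A)$; (4) apply the chain of isomorphisms and vanishing from Proposition~\ref{prop:ExtA/B}, namely $\Ext^*_A(A\otimes_B\k,A)\iso\Ext^*_B(\k,A)=\Hom_B(\k,A)=0$, and translate the middle two terms back into $\Coext^{*,*}_{B_*}(A_*,\k)$ and $\Cohom_{B_*}(A_*,\k)$ using the same comodule/module duality (now over $B_*$ versus $B$).

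The main obstacle, such as it is, lies in step~(2): making precise and correct the duality between the cotensor product $A_*\square_{B_*}\k$ over the quotient $P_*$-algebra and the induced module $A\otimes_B\k$ over the sub-$P$-algebra, including checking that this is an isomorphism of $A$-modules compatible with the antipode twists used elsewhere in Section~\ref{sec:DualP-algebras} and that all the relevant objects are bounded below and of finite type so the degree-wise duality actually applies (in particular, $A_*$ itself need not be finite, but it is of finite type and bounded below by hypothesis, which is what the cited isomorphisms require). Once that bookkeeping is in place the proof is essentially a one-line invocation of Proposition~\ref{prop:ExtA/B}, and I would present it as such, perhaps remarking that alternatively one can run the argument of Proposition~\ref{prop:Palg-finite} directly in the comodule category using the Poincar\'e duality structure of the $B(n)_*$ along a cofiltration, but the dualisation route is cleaner.
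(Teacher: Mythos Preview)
Your proposal is correct and follows the same approach as the paper: the paper presents this proposition simply as the dual version of Proposition~\ref{prop:ExtA/B} without giving an explicit proof, and your dualisation via $\Coext^{s,*}_{A_*}(M_*,N_*)\iso\Ext^{s,*}_A(N^*,M^*)$ together with the identification $(A_*\square_{B_*}\k)^*\iso A\otimes_B\k$ is exactly how one unpacks that. Your attention to the bookkeeping in step~(2) is appropriate but, as you anticipate, routine once finite type and bounded-below hypotheses are in force.
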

\begin{defn}\label{defn:coherentcomodule}
If $A_*$ is a $P_*$-algebra then an
$A_*$-comodule~$M_*$ is \emph{coherent}
if its dual~$M^*$ is a coherent $A$-module;
this is equivalent to the existence of an
exact sequence of $A_*$-comodules
\[
0\to M_*\to A_*\otimes U_*\to A_*\otimes V_*
\]
where $U_*,V_*$ are finite $\k$-vector spaces.
\end{defn}

Here is a dual version of Proposition~\ref{prop:Coherent-InjResn}.
\begin{prop}\label{prop:CoherentComod-ProjResn}
Let $M_*$ be a coherent comodule over a $P_*$-algebra{}
$A_*$. Then~$M_*$ admits a projective resolution
by finitely generated cofree comodules.
\end{prop}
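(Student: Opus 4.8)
The plan is to deduce this by dualising the injective resolution produced in Proposition~\ref{prop:Coherent-InjResn}. By Definition~\ref{defn:coherentcomodule}, saying that $M_*$ is coherent means exactly that its degreewise dual $M^*$ is a coherent $A$-module; since $A$ is connected and of finite type, $M^*$ is then bounded below and of finite type. Applying Proposition~\ref{prop:Coherent-InjResn} to $M^*$ I obtain an injective resolution
\[
0\to M^*\to J^0\to J^1\to\cdots
\]
in which every $J^s$ is a finitely generated free $A$-module, hence also bounded below and of finite type.

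Next I would apply the degreewise $\k$-dual. Because $\k$ is a field this functor is exact, and on bounded below finite type graded vector spaces the canonical map to the double dual is an isomorphism, so $(-)^*$ restricts to an exact contravariant equivalence between bounded below finite type $A$-modules and bounded below finite type $A_*$-comodules --- the identification already invoked above for $\Coext$ versus $\Ext$. Dualising the resolution of $M^*$ therefore gives an exact sequence of $A_*$-comodules
\[
\cdots\to (J^1)_*\to (J^0)_*\to M_*\to 0,
\]
and the dual of a finitely generated free $A$-module is a finitely generated cofree $A_*$-comodule.

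The one point that needs genuine justification --- and the step I expect to be the main obstacle --- is that these finitely generated cofree comodules are \emph{projective}, since cofree comodules are a priori only injective. Here I would use the dual of~\cite{HRM:Book}*{theorem~13.12}: a finitely generated free $A$-module is bounded below and self-injective, so under the anti-equivalence above its degreewise dual is both injective and projective; equivalently, a bounded below injective $A_*$-comodule is projective, while a finitely generated cofree comodule is both bounded below and injective (over the field $\k$). Granting this, the displayed exact sequence is a projective resolution of $M_*$ by finitely generated cofree comodules, as required.

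An alternative, working entirely on the comodule side and mirroring the proof of Proposition~\ref{prop:Coherent-InjResn}, is available: choose $n$ with $M_*\iso A_*\square_{A(n)_*}M'_*$ for a finitely generated $A(n)_*$-comodule $M'_*$; since $A(n)_*$ is finite dimensional, $M'_*$ is a quotient of a finitely generated cofree --- hence projective --- $A(n)_*$-comodule, and cotensoring up the coflat extension $A_*$ (coflat because $A$ is free over $A(n)$, by Milnor--Moore) yields an epimorphism onto $M_*$ from a finitely generated cofree $A_*$-comodule with coherent kernel; iterating builds the resolution. Either route reduces to the same essential fact, the projectivity of finitely generated cofree $A_*$-comodules, with everything else formal.
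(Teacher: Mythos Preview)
Your proposal is correct and follows essentially the same approach as the paper: the paper's proof consists of the single sentence ``Take an injective resolution of $M^*$ as in Proposition~\ref{prop:Coherent-InjResn} and then take duals to obtain a projective resolution.'' You have simply unpacked this, supplying the justification (via the duality of Remark~\ref{rem:comodules-modules} and~\cite{HRM:Book}*{theorem~13.12}) for why the dualised terms are projective, and your alternative comodule-side argument is a reasonable elaboration but not needed.
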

\begin{proof}
Take an injective resolution of $M^*$ as
in Proposition~\ref{prop:Coherent-InjResn}
and then take duals to obtain a projective
resolution.
\end{proof}

Notice that since $A_*$ is an injective comodule
we have
\[
\Coext_{A_*}^*(A_*,A_*)
\iso \Hom_\k(A_*,\k)
\iso A.
\]

\begin{prop}\label{prop:CoherentComod-finite}
Let $M_*$ be a coherent comodule over a $P_*$-algebra{}
$A_*$ and let $N_*$ be a finite $P_*$-comodule.
Then
\[
\Coext_{A_*}^*(M_*,N_*)=0.
\]
\end{prop}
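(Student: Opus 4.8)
The plan is to reduce the statement to its module-theoretic dual and then invoke Proposition~\ref{prop:Finite->coherent}. Recall the comparison isomorphism recorded just before Proposition~\ref{prop:P*alg-finite}: for bounded below finite type comodules we have $\Coext_{A_*}^{s,*}(M_*,N_*)\iso\Ext_A^{s,*}(N^*,M^*)$, where $(-)^*$ is the degree-wise $\k$-dual made into an $A$-module via the antipode. So the first step is to check that the hypotheses transport correctly: if $N_*$ is a finite $A_*$-comodule then its dual $N^*$ is a finite $A$-module, and if $M_*$ is a coherent $A_*$-comodule then by Definition~\ref{defn:coherentcomodule} its dual $M^*$ is a coherent $A$-module. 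Once this is in place, the right-hand side $\Ext_A^*(N^*,M^*)$ is exactly $\Ext_A^*(\text{finite},\text{coherent})$, which vanishes by Proposition~\ref{prop:Finite->coherent}.

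The one point that needs care — and I expect it to be the main obstacle — is whether the comparison isomorphism $\Coext_{A_*}^*(M_*,-)\iso\Ext_A^*(-^*,M^*)$ genuinely applies here, since it was stated for bounded below finite type comodules and a general coherent comodule $M_*$ need not be bounded below. The way around this is to work instead directly with a resolution. By Proposition~\ref{prop:CoherentComod-ProjResn}, $M_*$ admits a projective resolution $P_\bullet\to M_*$ by finitely generated cofree comodules, i.e.\ comodules of the form $A_*\otimes W_*$ with $W_*$ a finite $\k$-vector space. The functor $\Coext_{A_*}^*(-,N_*)$ can be computed from such a resolution, so it suffices to show $\Cohom_{A_*}(A_*\otimes W_*,N_*)=0$ for every finite $W_*$ and every finite $A_*$-comodule $N_*$; since $W_*$ is finite this reduces to $\Cohom_{A_*}(A_*,N_*)=0$, i.e.\ $\Hom_\k$-type cohomomorphisms $A_*\to N_*$ landing compatibly — equivalently, by the comparison in the finite type bounded below case (which \emph{does} apply to $A_*\otimes W_*$ and to $N_*$, both of which are of finite type and bounded below), to $\Hom_A(N^*,A)=0$, which holds by Proposition~\ref{prop:Palg-finite} since $N^*$ is finite and $A$ is free hence a bounded below free module over itself.

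Assembling: each $P_s=A_*\otimes W_s$ in the resolution satisfies $\Coext_{A_*}^*(P_s,N_*)=0$ in positive degrees (cofree comodules are injective over a field, so there are no higher $\Coext$'s into them from the first variable — more precisely $\Cohom_{A_*}(P_s,-)$ applied to an injective resolution of $N_*$, or dually $\Hom_A$ out of $N^*$ into the cofree-dual which is free, computes them) and $\Cohom_{A_*}(P_s,N_*)=0$ by the previous paragraph. Hence the $P_s$ are acyclic for the relevant functor and dimension-shifting through the finite resolution of $M^*$ gives $\Coext_{A_*}^*(M_*,N_*)=0$. Alternatively, and more cleanly, one may simply dualise the entire computation at the start: $\Coext_{A_*}^*(M_*,N_*)$ is computed by $\Hom_A(N^*,J^\bullet)$ where $J^\bullet$ is the injective resolution of $M^*$ by finitely generated free $A$-modules from Proposition~\ref{prop:Coherent-InjResn}, and $\Hom_A(N^*,J^s)=0$ for all $s$ by Proposition~\ref{prop:Palg-finite}. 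This second route is the shortest and is presumably what the author intends; the only thing to verify is that the duality between the injective resolution of $M^*$ and a projective resolution of $M_*$ is compatible with the $\Hom$–$\Cohom$ comparison, which is routine for finite type bounded below complexes.
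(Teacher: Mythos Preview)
Your proposal is correct and, once you settle on the resolution-based route, it is essentially the paper's proof: take the projective resolution $P_{\bullet,*}\to M_*$ by finitely generated cofree comodules from Proposition~\ref{prop:CoherentComod-ProjResn}, observe that $\Cohom_{A_*}(P_{s,*},N_*)=0$ for every~$s$, and conclude. The only difference is that the paper invokes Proposition~\ref{prop:P*alg-finite} directly for this vanishing (each $P_{s,*}$ is bounded below cofree and $N_*$ is finite), whereas you detour through the module side via Proposition~\ref{prop:Palg-finite}; your opening attempt to dualise the entire statement and quote Proposition~\ref{prop:Finite->coherent} is unnecessary once you have the cofree resolution in hand.
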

\begin{proof}
Let $P_{\bullet,*}\to M_*\to 0$ be a resolution
of~$M_*$ by cofree comodules. Then by
Proposition~\ref{prop:P*alg-finite}, for each
$s\geq0$ we have
\[
\Cohom_{A_*}(P_{s,*},N_*) = 0,
\]
and the result follows.
\end{proof}

\begin{rem}\label{rem:comodules-modules}
For bounded below finite type comodules over
a $P_*$-algebra $A_*$ dual to a \Palgebra{}
$A$, taking degree-wise duals defines an
equivalence of categories
\[
\xymatrix{
\Comod_{A_*}^{\flat,\,\mathrm{f.t.}}\ar@<.5ex>[rr]^(.5){(-)^*}
&& (\Mod_{A}^{\flat,\,\mathrm{f.t.}}\ar@<.5ex>[ll]^(.5){(-)_*})^\op
}
\]
between the $A_*$-comodule and the $A$-module
categories. Moreover, these functors are exact,
so this equivalence identifies injective
comodules (which are retracts of extended
comodules) with bounded below projective
modules. By~\cite{HRM:Book}*{theorem~13.12},
bounded below projective $A$-modules are
injective so it also identifies bounded
below injective comodules as projective
objects (this is not true in general of
course).  In fact this equivalence fits
into a bigger diagram
\begin{equation}\label{eq:Comod->Mod}
\xymatrix{
&& \Mod_{A}^{\sharp,\,\mathrm{f.t.}}\ar@<.5ex>[d]^(.5){(-)^*} \\
\Comod_{A_*}^{\flat,\,\mathrm{f.t.}}
\ar@<.5ex>[rr]^(.5){(-)^*}
\ar@/^15pt/[urr] \ar@/_15pt/[drr]
&& (\Mod_{A}^{\flat,\,\mathrm{f.t.}})^\op
\ar@<.5ex>[ll]^(.5){(-)_*}\ar@<.5ex>[u]^(.5){(-)_*}\ar[d]  \\
&& \MOD_{A}^\op
}
\end{equation}
where $\Mod_{A}^{\natural,\,\mathrm{f.t.}}$
denotes the category of finite type bounded
below homologically graded $A$-modules
(with $A$ acting by decreasing degree),
$\Mod_{A}^{\flat,\,\mathrm{f.t.}}$ denotes
the category of finite type bounded below
cohomologically graded $A$-modules
and $\MOD_{A}$ denoting the category
of all $A$-modules. All of the functors
here are exact.
\begin{rem}\label{rem:locfinmods}
Each object $M_*$ of
$\Mod_{A}^{\natural,\,\mathrm{f.t.}}$
is a locally finite $A$-module, i.e.,
it is a union of finite modules. It is
convenient to regrade~$M_*$ so that~$M_n$
is given cohomological degree~$-n$ and
then multiplication by a positive
degree element of~$A$ increases this
cohomological degree. Then by
Proposition~\ref{prop:Finite->coherent}
we have for any coherent $A$-module~$N$,
$\Ext_{A}^*(M_*,N) = 0$.
\end{rem}

For a fixed $A_*$-comodule $M_*$, the
functor
\[
\Cohom_{A_*}(M_*,-)=\Comod_{A_*}^{\flat,\,\mathrm{f.t.}}(M_*,-)
\to\Mod_\k^{\flat,\,\mathrm{f.t.}}
\]
is left exact and has right derived functors
$\Coext^*_{A_*}(M_*,-)$. Since
\[
\Cohom_{A_*}(M_*,-)\iso\Hom_{A}((-)^*,M^*)
=\Mod_{A}^{\flat,\,\mathrm{f.t.}}((-)^*,M^*)
=\MOD_{A}((-)^*,M^*)
\]
and injective comodules are sent to projective
modules, we also have
\begin{equation}\label{eq:Coext-Ext}
\Coext^*_{A_*}(M_*,-)\iso \Ext^*_{A}((-)^*,M^*).
\end{equation}

The contravariant functor
$\Comod_{A_*}^{\flat,\,\mathrm{f.t.}}\to\MOD_{A}^\op$
allows us to define cohomological invariants
of comodules using injective resolutions
in $\MOD_{A}$ as a substitute for projective
resolutions in
$\Comod_{A_*}^{\flat,\,\mathrm{f.t.}}$.
In effect for a comodule~$N_*$ we define
\[
\Coext^*_{A_*}(-,N_*)=\Ext_A(N^*,(-)^*).
\]
Of course this is calculated using injective
resolutions of $A$-modules; since $\Ext_A(-,-)$
is a balanced functor,~\eqref{eq:Coext-Ext}
implies that $\Coext^*_{A_*}(-,-)$ is too,
whenever we can use projective comodule
resolutions in the first variable. For
example, if we restrict to the subcategory
of coherent comodules we obtain balanced
bifunctors
\[
\Coext^s_{A_*}(-,-)\:
(\Comod_{A_*}^{\mathrm{coh}}(-,-))^{\op}
\otimes\Comod_{A_*}^{\mathrm{coh}}(-,-)
\to \Mod_\k^{\flat,\,\mathrm{f.t.}}.
\]
\end{rem}

Given a surjection of $P_*$-algebras
$A_*\to B_*$ there are adjunction
isomorphisms of the form
\begin{align}\label{eq:Cohom-Cotor-1}
\Cohom_{A_*}(-,-)
&\iso
\Cohom_{B_*\backslash\!\backslash A_*}
((B_*\backslash\!\backslash A_*)\square_{A_*}(-),-),
 \\ \label{eq:Cohom-Cotor-2}
\Cohom_{B_*}(-,-)
&\iso
\Cohom_{A_*}(-,A_*\square_{B_*}(-)),
\end{align}
where
$B_*\backslash\!\backslash A_* = \k\square_{B_*}A$.
Later we will use these adjunctions to
construct composite functor spectral
sequences.

\begin{rem}\label{rem:ASalch}
Since writing  this we became aware of the
revised version of Salch~\cite{AS:ProjComods},
where it is shown that the category of graded
connected comodules over a graded connected
Hopf algebra over a field has enough projectives.
As our results are stronger but more restricted,
we feel it worthwhile presenting them despite
the greater generality of Salch's result.
\end{rem}


\section{Some homological algebra}
\label{sec:HomAlg}

In this section we describe some Cartan-Eilenberg
spectral sequences for comodules over a commutative
Hopf algebra over a field. Some of these are
similar to other examples in the literature
such as that for computing~$\Cotor$ for Hopf
algebroids in~\cite{DCR:GreenBook}.

To ease notation, in this section we suppress
internal gradings and assume that all our
objects are connective and of finite type
over a field~$\k$. We refer to the classic
\cite{M&M:HopfAlg} as well as the more
recent \cite{JPM&KP:MoreConcise} for notation
and basic ideas about graded Hopf algebras.

Before discussing cohomology for comodules,
we will recall the dual theory to modules
over algebras, where there are classical
Cartan-Eilenberg spectral sequences of~\cite{HC&SE:HomAlg}
for a normal sequence of Hopf algebras
over a field~$\k$,
\begin{equation}\label{eq:CESS-algebraseq}
R\to S \to S/\!/R
\end{equation}
where $S$ is a free $R$-module. Then for a left
$S/\!/R$-module~$L$ and a left $S$-module~$M$
there is a spectral sequence of the form
\begin{equation}\label{eq:CESS-algebras-1}
\mathrm{E}_2^{s,t} =
\Ext^s_{S/\!/R}(L,\Ext^t_{R}(\k,M))
\Lra \Ext^{s+t}_{S}(L,M).
\end{equation}
There is another similar spectral sequence
for a left $S$-module~$M$ and a left
$S/\!/R$-module~$N$ which has the form
\begin{equation}\label{eq:CESS-algebras-2}
\mathrm{E}_2^{s,t} =
\Ext^s_{S/\!/R}(\Tor^t_{R}(\k,M),N)
\Lra \Ext^{s+t}_{S}(M,N).
\end{equation}

Since $\Ext$ and $\Tor$ are balanced functors,
one approach to setting these spectral sequences
is by resolving both variables and using double
complex spectral sequences. However, they can
instead be viewed as composite functor spectral
sequences obtained using injective or projective
resolutions of the $S$-module~$M$. When the
algebras and modules are graded $\k$-vector spaces,
these spectral sequences are tri-graded; also,
in topological applications,~\eqref{eq:CESS-algebraseq}
is often a sequence of cocommutative Hopf algebras.

Now suppose we have a sequence of homomorphisms
of connected commutative graded Hopf algebras
over~$\k$,
\[
K\backslash\!\backslash H \rightarrowtail H \twoheadrightarrow K,
\]
where in the notation of \cite{M&M:HopfAlg}*{definition~3.5},
\[
K\backslash\!\backslash H
= \k\square_K H
= H \square_K\k
\subseteq H .
\]
We also assume given a left
$K\backslash\!\backslash H$-comodule
$M$ and a left $H $-comodule~$N$. Of course
$M$ and $N$ inherit structures of $H $-comodule
and $K$-comodule respectively, where~$M$
is trivial as a $K$-comodule. Our aim is
to calculate $\Coext^*_H (M,N)$, the right
derived functor of
\[
\Cohom_H (M,-)\:
\Comod_{K\backslash\!\backslash H}\to\Mod_\k;
\quad N\mapsto \Cohom_H (M,N).
\]

Following Hovey~\cite{MH:HtpyThyComods}, we
will write $U\overset{H}\wedge V$ to indicate
the tensor product of two $H $-comodules
$U\otimes V=U\otimes_\k V$ with the diagonal
coaction given by the composition

\[
\xymatrix{
U\otimes V\ar[r]_(.3){\mu\otimes\mu}\ar@/^18pt/[rrr]
& (H \otimes U)\otimes (H \otimes V)\ar[r]_{\iso}
& (H \otimes H)\otimes (U\otimes V)\ar[r]_(.55){\phi\otimes\Id}
& H \otimes (U\otimes V).
}
\]
For a vector space $W$, the notation $U\otimes W$
will be used to denote $H $-comodule with
coaction

\[
\xymatrix{
U\otimes W\ar[r]_(.4){\mu\otimes\Id}\ar@/^14pt/[rr]
& (H \otimes U)\otimes W\ar[r]_(.5){\iso}
& H \otimes(U\otimes W)
}
\]
carried on the first factor alone.

If $L$ is a left $H $-comodule, then there
is a well-known isomorphism of left
$H$-comodules
\begin{equation}\label{eq:Comodtwisting}
K\backslash\!\backslash H \overset{H }\wedge L
=
(H \square_K\k)\overset{H }\wedge L
\iso
H \square_K L,
\end{equation}
We can also regard
$K\backslash\!\backslash H =\k\square_K H $
as a right $H $-comodule to form the left
$K\backslash\!\backslash H $-comodule
\begin{equation}\label{eq:Comodtwisting-Sigma}
K\backslash\!\backslash H \square_H  L
= (\k\square_K H )\square_H  L
\iso \k\square_K L;
\end{equation}
in particular, if $L$ is a trivial $K$-comodule
then as left $K\backslash\!\backslash H$-comodules,
\begin{equation}\label{eq:Comodtwisting-trivial}
K\backslash\!\backslash H \square_H  L
\iso L.
\end{equation}

We will use two more functors
\[
\Comod_{H }\to\Comod_{K\backslash\!\backslash H};
\quad
N\mapsto
K\backslash\!\backslash H\square_H  N
= (\k\square_K H)\square_H  N
\iso \k\square_K N
\]
and
\[
\Comod_{K\backslash\!\backslash H}\to\Mod_\k;
\quad
N\mapsto  \Cohom_{K\backslash\!\backslash H}(M,N).
\]
Notice that there is a natural isomorphism
\[
 \Cohom_{K\backslash\!\backslash H}(M,K\backslash\!\backslash H\square_H (-))
\iso  \Cohom_H (M,-)
\]
and for an injective $H $-comodule~$J$,
$K\backslash\!\backslash H \square_H  J$
is an injective $K\backslash\!\backslash H$-comodule.
This means we are in a situation where we
have a Grothendieck composite functor spectral
sequence which in this case is a form of
Cartan-Eilenberg spectral sequence; for details
see \cite{CAW:HomAlg}*{section~5.8} for example.

\begin{prop}\label{prop:CESS-1}
Let $M$ be a left\/
$K\backslash\!\backslash H$-comodule and $N$
a left\/ $H $-comodule. Then there is a first
quadrant cohomologically indexed spectral
sequence with
\[
\mathrm{E}_2^{s,t} =
\Coext_{K\backslash\!\backslash H}^s(M,\Cotor_K^t(\k,N))
\Lra
\Coext_H ^{s+t}(M,N).
\]
If $N$ is a trivial\/ $K$-comodule then
\[
\mathrm{E}_2^{s,t} \iso
\Coext_{K\backslash\!\backslash H}^s(M,
\Cotor_K^t(\k,\k)\overset{K\backslash\!\backslash H}\wedge N).
\]
\end{prop}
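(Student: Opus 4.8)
The plan is to realize the desired spectral sequence as the Grothendieck composite functor spectral sequence for the factorization
\[
\Cohom_{H}(M,-) \iso \Cohom_{K\backslash\!\backslash H}\bigl(M, K\backslash\!\backslash H\square_H(-)\bigr),
\]
which was recorded just before the statement. Here the outer functor is $\Cohom_{K\backslash\!\backslash H}(M,-)\colon\Comod_{K\backslash\!\backslash H}\to\Mod_\k$ and the inner functor is $K\backslash\!\backslash H\square_H(-)\colon\Comod_H\to\Comod_{K\backslash\!\backslash H}$. First I would check the two hypotheses needed for the Grothendieck machine (see \cite{CAW:HomAlg}*{section~5.8}): both functors are left exact (the inner one because $K\backslash\!\backslash H$ is a flat, indeed free-ish, $H$-comodule by Milnor--Moore, and cotensor is left exact; the outer one by definition of $\Cohom$), and the inner functor sends injectives to $\Cohom_{K\backslash\!\backslash H}(M,-)$-acyclics. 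For the latter, as noted in the excerpt, if $J$ is an injective (i.e.\ retract of extended) $H$-comodule then $K\backslash\!\backslash H\square_H J$ is an injective $K\backslash\!\backslash H$-comodule, hence in particular acyclic. This gives a first-quadrant cohomologically indexed spectral sequence
\[
\mathrm{E}_2^{s,t} = R^s\Cohom_{K\backslash\!\backslash H}(M,-)\bigl(R^t(K\backslash\!\backslash H\square_H(-))(N)\bigr)\Lra \Coext_H^{s+t}(M,N).
\]

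Next I would identify the two columns of $\mathrm{E}_2$. The outer derived functor is $\Coext_{K\backslash\!\backslash H}^s(M,-)$ by definition. For the inner derived functor I would compute $R^t(K\backslash\!\backslash H\square_H(-))(N)$. Using the isomorphism $K\backslash\!\backslash H\square_H N\iso \k\square_K N$ from \eqref{eq:Comodtwisting-Sigma} (regarding $K\backslash\!\backslash H=\k\square_K H$ as a right $H$-comodule), the inner functor is naturally isomorphic to $\k\square_K(-)\colon\Comod_H\to\Comod_{K\backslash\!\backslash H}$, where we restrict an $H$-comodule to a $K$-comodule via the surjection $H\twoheadrightarrow K$. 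Its right derived functors are exactly $\Cotor_K^t(\k,N)$; one has to check that the natural $K\backslash\!\backslash H$-comodule structure on $\Cotor_K^t(\k,N)$ (inherited from the comodule structure on $\k\square_K J^\bullet$ for an injective resolution $J^\bullet$) is the intended one, but this is formal. Substituting gives the claimed $\mathrm{E}_2$-page.

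For the second assertion, suppose $N$ is a trivial $K$-comodule. I would compute $\Cotor_K^t(\k,N)$ as a $K\backslash\!\backslash H$-comodule. Since $N$ is $K$-trivial, as a $K$-comodule $N\iso \k\otimes N$ (cofree on the vector space $N$), so $\Cotor_K^t(\k,N)\iso \Cotor_K^t(\k,\k)\otimes N$ as graded vector spaces; the point is to track the $K\backslash\!\backslash H$-comodule structure and see it is the diagonal one $\Cotor_K^t(\k,\k)\overset{K\backslash\!\backslash H}\wedge N$. The cleanest route is to fix an injective resolution $\k\to J^\bullet$ of $\k$ as an $H$-comodule; then $N\to J^\bullet\overset{H}\wedge N$ is an injective resolution of $N$ (tensoring an injective $H$-comodule with any comodule stays injective, by the twisting/untwisting isomorphism), applying $\k\square_K(-)$ gives $\k\square_K(J^\bullet\overset{H}\wedge N)$, and using \eqref{eq:Comodtwisting} to rewrite $J^s\overset{H}\wedge N$ and then \eqref{eq:Comodtwisting-Sigma} lets one factor out $N$ with its $H$-coaction while the $K$-resolution of $\k$ supplies the $\Cotor_K^\bullet(\k,\k)$ part; cohomology then yields $\Cotor_K^t(\k,\k)\overset{K\backslash\!\backslash H}\wedge N$ with the diagonal $K\backslash\!\backslash H$-coaction. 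The main obstacle I anticipate is precisely this bookkeeping with the several twisting isomorphisms \eqref{eq:Comodtwisting}--\eqref{eq:Comodtwisting-trivial} and the antipode: getting the comodule structures (not just the underlying vector spaces) to match up exactly, so that the diagonal coaction $\overset{K\backslash\!\backslash H}\wedge$ appears rather than a carried-on-one-factor coaction. Everything else is a routine application of the Grothendieck composite functor spectral sequence together with the already-established acyclicity and adjunction facts from the excerpt.
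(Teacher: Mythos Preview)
Your proposal is correct and follows exactly the approach the paper takes: the paper sets up the factorization $\Cohom_H(M,-)\iso\Cohom_{K\backslash\!\backslash H}(M,K\backslash\!\backslash H\square_H(-))$, notes that $K\backslash\!\backslash H\square_H(-)$ carries injectives to injectives, and then simply invokes the Grothendieck composite functor spectral sequence from \cite{CAW:HomAlg}*{section~5.8} without writing a separate proof. You supply more detail than the paper does---in particular your identification of the inner derived functor via \eqref{eq:Comodtwisting-Sigma} and your sketch of the second assertion using $J^\bullet\overset{H}\wedge N$ and the twisting isomorphisms---and your anticipated bookkeeping obstacle is exactly the sort of thing the paper leaves to the reader.
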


There is another spectral sequence that
we will use whose construction requires
that one of the Hopf algebras involved
is a $P_*$-algebra. The reason for this
is discussed in Remark~\ref{rem:comodules-modules}:
in the category of finite type connected
comodules, extended comodules are projective
objects.

\begin{prop}\label{prop:CESS-2}
Assume that $H $ and $K\backslash\!\backslash H$
are $P_*$-algebras. Let~$M$ be a left\/
$H$-comodule which admits a projective resolution
and let~$N$ be a bounded below left\/
$K\backslash\!\backslash H$-comodule. Then there
is a first quadrant cohomologically indexed
spectral sequence with
\[
\mathrm{E}_2^{s,t} =
\Coext_{K\backslash\!\backslash H}^s(\Cotor_K^t(\k,M),N)
\Lra
\Coext_H ^{s+t}(M,N).
\]
If $M$ is a trivial\/ $K$-comodule then
\[
\mathrm{E}_2^{s,t} \iso
\Coext_{K\backslash\!\backslash H}^s
(\Cotor_K^t(\k,\k)\overset{K\backslash\!\backslash H}\wedge M,N).
\]
\end{prop}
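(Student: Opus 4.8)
The plan is to realize this as a Grothendieck composite functor spectral sequence, dual to the strategy behind Proposition~\ref{prop:CESS-1}, but now resolving the \emph{first} variable $M$ by projectives rather than the second variable by injectives. The composite we want to factor is $\Cohom_H(-,N)$ regarded as a contravariant functor on a suitable subcategory of $\Comod_H$. First I would observe that $\Cohom_H(M,N)\iso\Cohom_{K\backslash\!\backslash H}(K\backslash\!\backslash H\square_H M, N)$: this is the natural isomorphism already recorded before Proposition~\ref{prop:CESS-1}, applied with the roles of the variables arranged so that the functor $M\mapsto K\backslash\!\backslash H\square_H M\iso\k\square_K M$ feeds into $\Cohom_{K\backslash\!\backslash H}(-,N)$. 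So we have a factorization
\[
\Cohom_H(-,N)\iso \Cohom_{K\backslash\!\backslash H}(-,N)\o\bigl(K\backslash\!\backslash H\square_H(-)\bigr),
\]
both pieces being additive functors between abelian categories.

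Next I would check the two hypotheses that make the Grothendieck machine run. First, $K\backslash\!\backslash H\square_H(-)$ must send projective $H$-comodules to objects acyclic for $\Cohom_{K\backslash\!\backslash H}(-,N)$. By hypothesis $H$ is a $P_*$-algebra, and by Remark~\ref{rem:comodules-modules} the projective objects in the relevant category of finite type bounded below $H$-comodules are exactly the retracts of cofree comodules $H\otimes W$; so it suffices to handle $K\backslash\!\backslash H\square_H(H\otimes W)\iso (K\backslash\!\backslash H)\otimes W$, a cofree $K\backslash\!\backslash H$-comodule. Since $K\backslash\!\backslash H$ is assumed to be a $P_*$-algebra, cofree $K\backslash\!\backslash H$-comodules are injective, hence acyclic for the left exact functor $\Cohom_{K\backslash\!\backslash H}(-,N)$ in its injective variable --- but here we are using them in the \emph{first} variable, so the genuinely relevant input is that on coherent comodules $\Coext_{K\backslash\!\backslash H}$ is balanced (the discussion in Remark~\ref{rem:comodules-modules}), equivalently that one may compute $\Coext_{K\backslash\!\backslash H}(-,N)$ from a projective resolution of the first variable; cofree comodules being projective, they are acyclic. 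Second, the right derived functors of the first stage $K\backslash\!\backslash H\square_H(-)$ are by definition $\Cotor_H^t(K\backslash\!\backslash H,-)\iso\Cotor_H^t(\k\square_K H,-)$, which by the change-of-rings/Cotor-collapse along the normal inclusion (the dual of $S$ free over $R$, cf.~\eqref{eq:Comodtwisting-Sigma}) is $\Cotor_K^t(\k,-)$. Feeding these into the Grothendieck spectral sequence of the composite gives exactly
\[
\mathrm{E}_2^{s,t}=\Coext_{K\backslash\!\backslash H}^s\bigl(\Cotor_K^t(\k,M),N\bigr)\Lra \Coext_H^{s+t}(M,N),
\]
and first-quadrant convergence follows from connectivity/finite-type together with the boundedness of $N$, which guarantees each $\mathrm{E}_2$ term is finite type and the filtration is finite in each internal degree.

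For the last assertion, when $M$ is a trivial $K$-comodule I would use the Künneth-type splitting $\Cotor_K^t(\k,M)\iso\Cotor_K^t(\k,\k)\otimes M$ over the field $\k$, and then identify the resulting $K\backslash\!\backslash H$-comodule structure: the coaction on $\Cotor_K^t(\k,M)$ is carried via the $H$-coaction on $M$, which under the trivial-$K$ hypothesis descends through $K\backslash\!\backslash H\hookrightarrow H$, and the twisting identity~\eqref{eq:Comodtwisting} (with the roles of the factors as in the trivial-comodule form of Proposition~\ref{prop:CESS-1}) rewrites $\Cotor_K^t(\k,\k)\otimes M$ as $\Cotor_K^t(\k,\k)\overset{K\backslash\!\backslash H}\wedge M$. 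Substituting this into the $\mathrm{E}_2$ term gives the displayed formula.

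The main obstacle I anticipate is not the formal spectral sequence but the acyclicity/balancedness bookkeeping in the first paragraph above: one must be careful that ``projective in the comodule category'' really does match ``cofree retract'' in the finite-type bounded-below setting (this is where $H$ being a $P_*$-algebra, via Remark~\ref{rem:comodules-modules} and \cite{HRM:Book}*{theorem~13.12}, is essential and cannot be dropped), and that $\Coext_{K\backslash\!\backslash H}$ is genuinely balanced on the subcategory where we land --- equivalently that $N$ being bounded below keeps us inside a category with enough projectives in the first variable. The second delicate point is verifying $\Cotor_H^t(\k\square_K H,-)\iso\Cotor_K^t(\k,-)$ with the correct $K\backslash\!\backslash H$-comodule structure on the $\mathrm{E}_2$ page; this is the dual of the classical Cartan--Eilenberg collapse for $S$ free over $R$ used in \eqref{eq:CESS-algebras-2}, and I would cite \cite{HC&SE:HomAlg} or \cite{CAW:HomAlg}*{section~5.8} rather than reprove it.
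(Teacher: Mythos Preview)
Your proposal is correct and follows essentially the same approach as the paper: the paper's proof is a terse two-sentence argument expressing $\Cohom_H(-,N)$ as the composite $\Cohom_{K\backslash\!\backslash H}(-,N)\circ(K\backslash\!\backslash H\square_H(-))$, noting (via Remark~\ref{rem:comodules-modules}) that the cotensor functor sends projective $H$-comodules to projective $K\backslash\!\backslash H$-comodules, and then invoking the standard Grothendieck construction. You have simply fleshed out the verifications the paper leaves implicit---in particular the identification of the derived functors of the first stage with $\Cotor_K^t(\k,-)$ and the balancedness issues---and handled the trivial-$K$-comodule case explicitly, where the paper says nothing further.
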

\begin{proof}
The construction is similar to the other one,
and involves expressing $\Cohom_H (-,N)$
as a composition
\[
\Cohom_{K\backslash\!\backslash H}(-,N)\circ (K\backslash\!\backslash H\square_H (-))
=
\Cohom_{K\backslash\!\backslash H}(K\backslash\!\backslash H\square_H (-),N)
\iso \Cohom_H (-,N).
\]
The functor
$K\backslash\!\backslash H \square_H (-)\:
\Comod_H\to\Comod_{K\backslash\!\backslash H}$
sends projective objects to projective objects
(see Remark~\ref{rem:comodules-modules}), so
the standard construction can be used to give
a spectral sequence.
\end{proof}

Of course the condition that~$M$ admits a projective
resolution is crucial; in the case of $P_*$-algebras
this is satisfied if~$M$ is a coherent comodule.

\section{Finite comodule filtrations}\label{sec:Filtrations}

In this section we will give some results based
on a particular kind of comodule filtration.

We will make use of the adjoint coaction dual
to the adjoint action which is given a thorough
treatment in
Singer~\cite{WMS:SteenrodSqsSS}*{chapter~4};
the dualisation to the comodule setting is
straightforward and details are left to the
reader. For our purposes we need to know the
following: If $B\subseteq A$ is a conormal
subHopf algebra of a commutative Hopf algebra
over a field~$\k$, then there is a (left)
\emph{adjoint coaction} $A\to A\otimes A$
which is the composition of the solid arrows
in the following commutative diagram, where
$\mathrm{T}$ is the switch map.
\begin{equation}\label{eq:AdjCoaction}
\xymatrix{
B \ar@{.>}[d]\ar@/^20pt/@{.>}[ddrrrrrr] &&&&& \\
A\ar[d]_{\psi}\ar@/^20pt/[ddrrrrrr] &&&&&&  \\
A\otimes A\ar[d]|-{\psi\otimes\Id=\Id\otimes\psi} &&&&&& B\otimes B\ar[d] \\
A\otimes A\otimes A\ar[r]
&A\otimes A\otimes A\ar@{<->}[rr]_{\Id\otimes\mathrm{T}}^\iso
&&A\otimes A\otimes A\ar@{<->}[rr]_{\Id\otimes\chi\otimes\Id}^\iso
&&A\otimes A\otimes A\ar[r]_(.58){\phi\otimes\Id}
&A\otimes A \\
}
\end{equation}
By \cite{WMS:SteenrodSqsSS}*{proposition~4.24},
the adjoint coaction makes~$A$ and any conormal
subHopf algebra~$B$ into an $A$-comodule Hopf
algebra; furthermore, for any left $A$-comodule~$M$
(which also becomes a left $A/\!/B$-comodule
through the projection $A\to A/\!/B$), there
is an induced left $A$-coaction on
$\Cotor_{A/\!/B}(\k,M)$ that factors through
a left $B$-coaction.

\begin{equation}\label{eq:Cotor-AdjActionFactors}
\xymatrix{
\Cotor_{A/\!/B}(\k,M)\ar[dr]\ar[rr]
& & A\otimes\Cotor_{A/\!/B}(\k,M)  \\
& B\otimes\Cotor_{A/\!/B}(\k,M)\ar[ur] &
}
\end{equation}

Now let $C$ be a cocommutative coalgebra
over a field~$\k$. A $C$-comodule~$M$ is
\emph{unipotent} if it has a finite length
descending filtration by subcomodules
\[
M=M^\ell\supset M^{\ell-1}\supset\cdots
    \supset M^1\supset M^0=0
\]
where each quotient comodule $M^{i+1}/M^i$
has trivial coaction. We will refer to
such a filtration as a \emph{unipotent
filtration of length}~$\ell$. Every
comodule~$M$ contains a \emph{primitive
sequence} of subcomodules
\[
M\supseteq \cdots \supseteq M^{[i+1]}\supseteq M^{[i]}
\supseteq \cdots \supseteq M^{[2]}\supseteq M^{[1]}
\supseteq M^{[0]} = 0
\]
defined recursively by
\[
M^{[i]} = \pi_{i-1}^{-1}\Prim_C(M/M^{[i-1]})
\]
where $\pi_{i-1}\:M\to M/M^{[i-1]}$ is
the quotient homomorphism, so $\pi_{i-1}$
induces an isomorphism
\[
M^{[i]}/M^{[i-1]}\xrightarrow{\iso}\Prim_C(M/M^{[i-1]}).
\]
In general this need not be exhaustive
or become stable, but if it has both
these properties then it is a unipotent
filtration of~$M$ and we will say that~$M$
has a \emph{finite primitive filtration}.
\begin{lem}\label{lem:unipotent->prim}
Suppose that the comodule~$M$ is unipotent.
Then the primitive sequence of~$M$ is
finite.
\end{lem}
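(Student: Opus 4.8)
The plan is to show that the primitive sequence dominates the given unipotent filtration term by term, and hence must exhaust $M$ within $\ell$ steps. I will use the characterisation of $\Prim_C(N)=\k\,\square_C N$ as the \emph{maximal} subcomodule of a $C$-comodule $N$ on which the coaction is trivial, together with two elementary observations: a quotient comodule of one with trivial coaction again has trivial coaction, and any trivial subcomodule of $N$ lies inside $\Prim_C(N)$.

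The main step is to prove, by induction on $i$, that $M^i\subseteq M^{[i]}$, where $M=M^\ell\supset\cdots\supset M^0=0$ is the given unipotent filtration. The base case $M^0=0=M^{[0]}$ is clear. Assume $M^i\subseteq M^{[i]}$ and let $\pi_i\colon M\to M/M^{[i]}$ be the quotient homomorphism. The kernel of $\pi_i|_{M^{i+1}}$ is $M^{i+1}\cap M^{[i]}\supseteq M^{i+1}\cap M^i=M^i$, so $\pi_i(M^{i+1})$ is a quotient comodule of $M^{i+1}/M^i$; since the latter has trivial coaction by hypothesis, $\pi_i(M^{i+1})$ is a trivial subcomodule of $M/M^{[i]}$ and therefore $\pi_i(M^{i+1})\subseteq\Prim_C(M/M^{[i]})$. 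Hence $M^{i+1}\subseteq\pi_i^{-1}\Prim_C(M/M^{[i]})=M^{[i+1]}$, completing the induction. Taking $i=\ell$ gives $M=M^\ell\subseteq M^{[\ell]}\subseteq M$, so $M^{[\ell]}=M$; then $M/M^{[\ell]}=0$ forces $M^{[\ell+k]}=M^{[\ell]}=M$ for all $k\geq 0$. Thus the primitive sequence is exhaustive and eventually constant, i.e., it is a unipotent filtration of length at most $\ell$, which is precisely the assertion that it is finite.

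I do not anticipate a genuine obstacle: the argument is essentially formal once $\Prim_C$ is identified with the maximal trivial subcomodule. The only point needing a little care is the inductive bookkeeping — in particular the observation that $\pi_i(M^{i+1})$ is a quotient of $M^{i+1}/M^i$ rather than of $M^{i+1}$ itself, which is exactly where the hypothesis $M^i\subseteq M^{[i]}$ enters.
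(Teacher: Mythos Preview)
Your proof is correct and follows essentially the same approach as the paper: both argue by induction that $M^i\subseteq M^{[i]}$ for all~$i$, using that the image of $M^{i+1}$ in $M/M^{[i]}$ is a quotient of the trivial comodule $M^{i+1}/M^i$ and hence lands in $\Prim_C(M/M^{[i]})$. Your write-up is in fact slightly more explicit about why the kernel of $\pi_i|_{M^{i+1}}$ contains $M^i$, whereas the paper packages the same reasoning into a commutative diagram.
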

\begin{proof}
Suppose that
\[
M=M^\ell\supset M^{\ell-1}\supset\cdots
    \supset M^1\supset M^0=0
\]
is a unipotent filtration. Then we clearly
have $M^1\subseteq M^{[1]}$. Now suppose
that for some $k\geq1$, $M^k\subseteq M^{[k]}$.
There is a commutative diagram of comodules
\[
\xymatrix{
M^{k+1}\ar@{^{(}->}[r]\ar[d] & M\ar@{=}[r]\ar[d]
& M\ar[d] \\
M^{k+1}/M^k\ar@{^{(}->}[r]\ar@/_15pt/[dr] & M/M^k\ar[r] & M/M^{[k]} \\
& \Prim_C(M/M^k)\ar[r]\ar@{^{(}->}[u]
& M^{[k+1]}/M^{[k]}\ar@{^{(}->}[u]
}
\]
from which it follows that $M^{k+1}\subseteq M^{[k+1]}$.
By Induction we find that $M^{n}\subseteq M^{[n]}$
for all~$n\geq1$ and in particular
$M = M^{\ell}\subseteq M^{[\ell]}$, so $M^{[\ell]}=M$.
\end{proof}

\begin{lem}\label{lem:unipotent-2from3}
Let $L,M,N$ be $C$-comodules fitting
into a short exact sequence
\[
0\to L\to M\to N\to 0.
\]
Then $M$ is unipotent if and only if
$L$ and $N$ are unipotent.
\end{lem}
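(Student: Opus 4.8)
The plan is to prove both implications directly from the definition of unipotence, using that a descending filtration of $M$ by subcomodules can be intersected with a subcomodule $L$ and pushed forward along the quotient $M\to N$, and conversely that filtrations of $L$ and $N$ can be spliced. For the ``only if'' direction, suppose $M=M^\ell\supset M^{\ell-1}\supset\cdots\supset M^0=0$ is a unipotent filtration. I would set $L^i=M^i\cap L$ and $N^i=\mathrm{image}(M^i\to N)=(M^i+L)/L$; both are descending filtrations of $L$ and $N$ respectively by subcomodules, of length at most $\ell$, reaching the whole comodule at the top and $0$ at the bottom. The point is to check that each successive quotient $L^{i+1}/L^i$ and $N^{i+1}/N^i$ has trivial coaction. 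For $L$ this is immediate: $L^{i+1}/L^i = (M^{i+1}\cap L)/(M^i\cap L)$ injects into $M^{i+1}/M^i$ as a subcomodule, and a subcomodule of a trivial comodule is trivial. For $N$ there is a surjection $M^{i+1}/M^i \twoheadrightarrow N^{i+1}/N^i$ of comodules, and a quotient of a trivial comodule is trivial; hence $N^{i+1}/N^i$ is trivial. So $L$ and $N$ are unipotent.

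For the ``if'' direction, suppose $L=L^a\supset\cdots\supset L^0=0$ and $N=N^b\supset\cdots\supset N^0=0$ are unipotent filtrations. Pulling the $N^j$ back along $q\:M\to N$ gives a chain of subcomodules $\widetilde{N}^j=q^{-1}(N^j)$ of $M$ with $\widetilde{N}^0=L$ and $\widetilde{N}^b=M$, and $\widetilde{N}^{j+1}/\widetilde{N}^j\iso N^{j+1}/N^j$ is trivial since $q$ restricts to an isomorphism on these quotients. Concatenating with the filtration $0=L^0\subset L^1\subset\cdots\subset L^a=L=\widetilde{N}^0$ produces a descending filtration of $M$
\[
M=\widetilde{N}^b\supset\cdots\supset\widetilde{N}^0=L=L^a\supset\cdots\supset L^1\supset L^0=0
\]
of length $a+b$ in which every successive quotient is either some $N^{j+1}/N^j$ or some $L^{i+1}/L^i$, hence trivial. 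Therefore $M$ is unipotent.

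I expect no serious obstacle here; the lemma is essentially formal, the content being the stability of ``trivial coaction'' under passing to subcomodules and quotients, and the fact that the lattice of subcomodules behaves well under the maps in the short exact sequence (images and preimages of subcomodules are subcomodules, and $q$ induces isomorphisms $q^{-1}(N^{j+1})/q^{-1}(N^j)\iso N^{j+1}/N^j$). The mildest care needed is in the ``only if'' direction to confirm that $L^{i+1}/L^i\to M^{i+1}/M^i$ is genuinely injective with subcomodule image, which is the usual second isomorphism statement $(M^{i+1}\cap L)/(M^i\cap L)\hookrightarrow M^{i+1}/M^i$ applied in the category of $C$-comodules; since $C$ is a coalgebra over a field, $\Comod_C$ is abelian and these elementary diagram facts hold verbatim.
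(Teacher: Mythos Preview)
Your proof is correct and follows essentially the same route as the paper's. The ``if'' direction is identical: pull back the filtration of $N$ along $q$ and splice it with the filtration of $L$. For the ``only if'' direction you intersect an arbitrary unipotent filtration of $M$ with $L$ and push it forward to $N$, exactly as the paper does; the only difference is that the paper first replaces the given unipotent filtration by the primitive filtration $M^{[k]}$ (via Lemma~\ref{lem:unipotent->prim}) before intersecting and projecting, whereas you work directly with any unipotent filtration. Your version is marginally more economical since it avoids that appeal to the previous lemma, but the substance of the argument is the same.
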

\begin{proof}
Suppose that $M$ and $N$ are unipotent.
A unipotent filtration of~$N$ pulls back
to a comodule filtration of~$M$ where
each stage contains the image of~$L$,
and this can be extended to a unipotent
filtration of~$M$ using a unipotent
filtration of~$L$.

Suppose that $M$ has a finite primitive
filtration
\[
M=M^{[\ell]}\supset M^{[\ell-1]}\supset\cdots
    \supset M^{[1]}\supset M^{[0]}=0.
\]
Set $L^1 = \Prim_C(L) = L\cap M^{[1]}$.
Then
\[
\Prim_C(L/L^1) \iso \Prim_C(L+M^{[1]}/M^{[1]})
= \bigl((L+M^{[1]})\cap M^{[2]}\bigr)/M^{[1]}.
\]
Now for each $1\leq k\leq\ell$ define
$L^k = L\cap M^{[k]}$. Notice that
$L^k = L\cap M^{[k]}\subseteq =L^{k+1}$
and $L^\ell=L$. Also,
\[
L^{k+1}/L^k \iso (L^{k+1}+ M^{[k]})/M^{[k]}
\subseteq \bigl((L + M^{[k]})\cap M^{[k+1]}\bigr)/M^{[k]}
\subseteq M^{[k+1]}/M^{[k]}
\]
so the coaction of $L^{k+1}/L^k$ is trivial.
Now taking $N=M/L$ define
\[
N^k = (L+M^{[k]})/L \iso M^{[k]}/L^k\cap M^{[k]}
\]
so that
\[
N^{k+1}/N^k \iso (L + M^{[k+1]})/(L + M^{[k]})
\]
which is easily seen to have trivial coaction.
\end{proof}

\begin{rem}\label{rem:Unipotent-induced}
Suppose that $M$ is a unipotent $C$-comodule.
Then $M$ is also unipotent as a $D$-comodule
where $C\to D$ is a surjective morphism of
coalgebras. Also, if the coaction factors
through a $C'$-coaction $M\to C_0\otimes M$
where $C'\subseteq C$ is a subcoalgebra,
then $M$ is a unipotent $C_0$-comodule.
\end{rem}

Our main use of such unipotent filtrations
is to situations described in the next result.
\begin{prop}\label{prop:Unipotent-Cotor}
Let $B\subseteq A$ be a conormal subHopf
algebra of a commutative Hopf algebra over
a field\/~$\k$. Suppose that~$M$ is a
unipotent left $A$-comodule and that for
every~$k\geq0$,\/ $\Cotor^k_{A/\!/B}(\k,\k)$
is a unipotent left $A$-comodule. Then
each\/ $\Cotor^k_{A/\!/B}(\k,M)$ is a
unipotent $A$-comodule.
\end{prop}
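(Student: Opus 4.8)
The plan is to run an induction on the length of a unipotent filtration of $M$, reducing to the case where $M$ has trivial $A$-coaction and then invoking the hypothesis on $\Cotor^k_{A/\!/B}(\k,\k)$. The two structural inputs are Lemma~\ref{lem:unipotent-2from3} (unipotence is a two-out-of-three property in short exact sequences) and the long exact sequence for $\Cotor_{A/\!/B}(\k,-)$ associated to a short exact sequence of $A/\!/B$-comodules, together with the fact (diagram~\eqref{eq:Cotor-AdjActionFactors}) that this long exact sequence lives in the category of $A$-comodules once we use the adjoint coaction. The base case is: if $M$ has trivial $A$-coaction, then as an $A/\!/B$-comodule it is a trivial comodule, so $\Cotor^k_{A/\!/B}(\k,M)\iso\Cotor^k_{A/\!/B}(\k,\k)\otimes M$ (a sum of copies of $\Cotor^k_{A/\!/B}(\k,\k)$ indexed by a basis of $M$), and this is unipotent by hypothesis, being a direct sum of unipotent comodules (a direct sum of unipotent comodules of uniformly bounded length is unipotent; boundedness follows since $M$ is finite type and connective, or one filters by finite sub-sums — this is the one routine point to nail down).

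For the inductive step, take a unipotent filtration $M=M^\ell\supset\cdots\supset M^0=0$ and consider the short exact sequence of $A$-comodules $0\to M^{\ell-1}\to M\to M/M^{\ell-1}\to 0$, in which $M/M^{\ell-1}$ has trivial coaction and $M^{\ell-1}$ is unipotent of shorter length. Applying $\Cotor_{A/\!/B}(\k,-)$ gives a long exact sequence, and by Remark~\ref{rem:Unipotent-induced} / diagram~\eqref{eq:Cotor-AdjActionFactors} this is a long exact sequence of $A$-comodules. By the inductive hypothesis each $\Cotor^k_{A/\!/B}(\k,M^{\ell-1})$ is unipotent, and by the base case each $\Cotor^k_{A/\!/B}(\k,M/M^{\ell-1})$ is unipotent. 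For each $k$, the term $\Cotor^k_{A/\!/B}(\k,M)$ sits in a short exact sequence of $A$-comodules with a subobject that is a quotient of $\Cotor^k_{A/\!/B}(\k,M^{\ell-1})$ (hence unipotent, using that subcomodules and quotient comodules of unipotent comodules are unipotent, which is the case $L=0$ or $N=0$ of Lemma~\ref{lem:unipotent-2from3}) and a quotient that is a subobject of $\Cotor^k_{A/\!/B}(\k,M/M^{\ell-1})$ (hence unipotent for the same reason). Lemma~\ref{lem:unipotent-2from3} then forces $\Cotor^k_{A/\!/B}(\k,M)$ to be unipotent, completing the induction.

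The step I expect to be the main obstacle is checking carefully that the connecting maps and all maps in the long exact sequence are genuinely $A$-comodule maps for the adjoint coaction, so that the three-term exact pieces to which Lemma~\ref{lem:unipotent-2from3} is applied really are exact sequences of $A$-comodules rather than merely of $\k$-vector spaces. This is where Singer's framework for the adjoint (co)action enters: one needs that $\Cotor_{A/\!/B}(\k,-)$ is a functor to $A$-comodules and that the long exact sequence of a short exact sequence of $A$-comodules is natural in that category. Granting that (which the excerpt attributes to \cite{WMS:SteenrodSqsSS}*{proposition~4.24}), the rest is the bookkeeping of two-out-of-three for unipotence indicated above.
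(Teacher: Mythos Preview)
Your proposal is correct and follows essentially the same approach as the paper: induction on the length of a unipotent filtration of~$M$, with the base case handled by the isomorphism $\Cotor^k_{A/\!/B}(\k,M)\iso\Cotor^k_{A/\!/B}(\k,\k)\otimes M$ for trivial~$M$, and the inductive step via the long exact sequence of $A$-comodules (justified through Singer) combined with Lemma~\ref{lem:unipotent-2from3}. Your extra care about the base case is unnecessary worry: since~$M$ has trivial coaction, tensoring a unipotent filtration of $\Cotor^k_{A/\!/B}(\k,\k)$ by~$M$ gives a unipotent filtration of the same length, with no appeal to finite type or connectivity needed.
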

\begin{proof}
We remark that the long exact sequence for
$\Cotor^k_{A/\!/B}(\k,-)$ used below is
one of $A$-comodules; the dual module
case follows from the details given in
Singer~\cite{WMS:SteenrodSqsSS}*{chapter~4}.

We prove this by induction on the length
$\ell$ of the unipotent filtration on~$M$.
When $\ell=1$,
\[
\Cotor^k_{A/\!/B}(\k,M) \iso
\Cotor^k_{A/\!/B}(\k,\k)\otimes M
\]
and the result holds. Now suppose that it
holds for $\ell<n$ and let $\ell=n$. Consider
the short exact sequence
\begin{equation}\label{eq:Unipotent-ses}
0\to M^{[n-1]}\to M\to M/M^{[n-1]}\to0
\end{equation}
where we know that for all $k\geq0$,
\[
\Cotor^k_{A/\!/B}(\k,M^{[n-1]}),
\quad
\Cotor^k_{A/\!/B}(\k,M^{[n]}/M^{[n-1]})
\]
are unipotent $A$-comodules. On applying
$\Cotor^k_{A/\!/B}(\k,-)$ to~\eqref{eq:Unipotent-ses}
we obtain a long exact sequence where
in degree~$k$ we have
\[
\xymatrix{
\cdots\ar[r]
& \Cotor^k_{A/\!/B}(\k,M^{[n-1]})\ar[r]
& \Cotor^k_{A/\!/B}(\k,M)\ar[r]
& \Cotor^k_{A/\!/B}(\k,M^{[n-1]})\ar[r]
& \cdots \\
}
\]
so by Lemma~\ref{lem:unipotent-2from3}
there is a short exact sequence
\[
0\to N'_k\to\Cotor^k_{A/\!/B}(\k,M)\to N''_k\to0
\]
for unipotent comodules $N'_k$ and $N''_k$.
Again using Lemma~\ref{lem:unipotent-2from3}
we find that $\Cotor^k_{A/\!/B}(\k,M)$ is
unipotent.
\end{proof}
\begin{rem}\label{rem:Unipotent-Cotor}
Although we have stated the last result
in terms the $A$-comodule structure, in
our applications we will use it by passing
to a conormal quotient Hopf algebra of~$A$.
\end{rem}

%
%

\begin{prop}\label{prop:Unipotent-Cotor-general}
Let $B\subseteq A$ be a conormal subHopf
algebra of a commutative Hopf algebra over
a field\/~$\k$ where $B$ is cocommutative.
Let $M$ be a left $A$-comodule such that
for every~$k\geq0$,\/ $\Cotor^k_{A/\!/B}(\k,M)$
is a unipotent left $B$-comodule. Then each\/
$\Cotor^k_{A}(\k,M)$ is a unipotent $B$-comodule.
\end{prop}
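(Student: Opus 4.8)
The plan is to obtain the result as a consequence of the Cartan-Eilenberg spectral sequence of Proposition~\ref{prop:CESS-1} applied to the conormal sequence $A/\!/B \rightarrowtail A \twoheadrightarrow B^\vee$... more precisely, since here $B\subseteq A$ is conormal, I want to use the dual sequence of quotient Hopf algebras and the spectral sequence computing $\Cotor_A$ from $\Cotor$ over a sub and a quotient. Concretely, there is a spectral sequence with $\mathrm{E}_2^{s,t} = \Cotor^s_{B}\bigl(\k,\Cotor^t_{A/\!/B}(\k,M)\bigr)$ converging to $\Cotor^{s+t}_{A}(\k,M)$; this is the standard Cartan-Eilenberg spectral sequence for $\k\to B\to$ (something) dualised, and it is a comodule spectral sequence, i.e. the differentials and the filtration on the abutment are maps of $B$-comodules (using the adjoint coaction as in~\eqref{eq:Cotor-AdjActionFactors} and Singer~\cite{WMS:SteenrodSqsSS}*{chapter~4}). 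The hypothesis is precisely that every $\mathrm{E}_2$ column $\Cotor^t_{A/\!/B}(\k,M)$ is a unipotent $B$-comodule, and the $B$-coaction on the spectral sequence is compatible with the $B$-comodule structure that makes $\Cotor^s_B(\k,-)$ a functor on $B$-comodules via the adjoint coaction.

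The key steps, in order, are as follows. First, I would record the relevant Cartan-Eilenberg spectral sequence and note that it is a spectral sequence of $B$-comodules; this is where I would cite Singer and the earlier discussion around~\eqref{eq:AdjCoaction}--\eqref{eq:Cotor-AdjActionFactors}, observing that since $B$ is cocommutative the relevant adjoint coactions are defined and behave as needed. Second, I would show that each $\mathrm{E}_2^{s,t} = \Cotor^s_B\bigl(\k,\Cotor^t_{A/\!/B}(\k,M)\bigr)$ is a unipotent $B$-comodule: by hypothesis $\Cotor^t_{A/\!/B}(\k,M)$ is unipotent, and then one argues by induction on the length of its unipotent filtration, using the long exact $\Cotor_B(\k,-)$-sequence together with Lemma~\ref{lem:unipotent-2from3} exactly as in the proof of Proposition~\ref{prop:Unipotent-Cotor} (indeed the case $M$ replaced by a trivial $B$-comodule gives $\Cotor_B^s(\k,\Cotor_{A/\!/B}^t(\k,\k)\otimes(\text{triv})) \iso \Cotor_B^s(\k,\k)\otimes(\text{triv})$, which is a direct sum of shifted copies of $\Cotor_B(\k,\k)$ and hence, since $B$ is a $P_*$-algebra... here I only need that it is a trivial $B$-comodule, which it is). Third, since unipotence is closed under subquotients and extensions (Remark~\ref{rem:Unipotent-induced} and Lemma~\ref{lem:unipotent-2from3}), each $\mathrm{E}_{r+1}$ page is obtained from $\mathrm{E}_r$ by passing to subquotients of $B$-comodules, so every page consists of unipotent $B$-comodules; moreover the filtration is finite (first-quadrant), so $\mathrm{E}_\infty^{s,t}$ is a subquotient of $\mathrm{E}_2^{s,t}$ and is unipotent. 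Finally, $\Cotor^n_A(\k,M)$ carries a finite filtration by $B$-subcomodules with associated graded pieces the $\mathrm{E}_\infty^{s,n-s}$, all unipotent, so by repeated application of Lemma~\ref{lem:unipotent-2from3} the comodule $\Cotor^n_A(\k,M)$ is itself unipotent.

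The main obstacle I anticipate is not the homological bookkeeping but verifying carefully that the Cartan-Eilenberg spectral sequence can be run entirely in the category of $B$-comodules — that the adjoint $B$-coaction on $\Cotor_{A/\!/B}(\k,M)$ from~\eqref{eq:Cotor-AdjActionFactors} is exactly the one with respect to which $\Cotor_B(\k,-)$ is being applied, and that the edge maps, differentials and the filtration on $\Cotor_A(\k,M)$ are $B$-comodule maps. This is the point where cocommutativity of $B$ is genuinely used and where I would lean on Singer~\cite{WMS:SteenrodSqsSS}*{chapter~4} and the remark preceding Proposition~\ref{prop:Unipotent-Cotor} that the long exact sequences are sequences of comodules; once that naturality is in hand, everything else is the same induction-and-subquotient argument already used for Proposition~\ref{prop:Unipotent-Cotor}, now with "$A$-comodule" systematically replaced by "$B$-comodule".
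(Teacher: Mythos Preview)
Your proposal is correct and takes essentially the same approach as the paper: both invoke the Cartan-Eilenberg spectral sequence $\mathrm{E}_2^{s,t} = \Cotor_B^s(\k,\Cotor_{A/\!/B}^t(\k,M)) \Longrightarrow \Cotor_A^{s+t}(\k,M)$, observe that cocommutativity of~$B$ makes this a spectral sequence of $B$-comodules, and then use Lemma~\ref{lem:unipotent-2from3} to push unipotence through the pages and the finite filtration on the abutment. Your write-up is in fact more careful than the paper's about the step showing the $\mathrm{E}_2$-term is unipotent (via induction on the unipotent filtration length and the long exact sequence, reducing to the trivial-coaction case), which the paper leaves implicit.
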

\begin{proof}
There is a Cartan-Eilenberg spectral sequence
with
\[
\mathrm{E}_2^{s,t} =
\Cotor_{B}^s(\k,\Cotor_{A/\!/B}^t(\k,M))
\Lra \Cotor_{A}^{s+t}(\k,M).
\]
When $B$ is cocommutative this is a first
quadrant cohomological spectral sequence
of left $B$-comodules, and the differentials
are homomorphisms of unipotent $B$-comodules;
this is verified by noting that
the functor $\Cotor_{A/\!/B}^t(\k,-)\iso\Cotor_{A}^t(B,-)$
takes values in the category of left $B$-comodules,
and when~$B$ is cocommutative and~$U$ is
finite dimensional,~$\Cohom_B(U,V)$ is
naturally a left comodule. By Lemma~\ref{lem:unipotent-2from3},
each $\mathrm{E}_\infty^{s,t}$ is a unipotent
$B$-comodule and the same is true of
each~$\Cotor_{A}^{n}(\k,M)$.
\end{proof}

In order to state a special case which will
be used later, we  recall a standard result
which can be found
in~\cite{JPM&KP:MoreConcise}*{corollary~21.24}.
Let $B\subseteq A$ and $C\subseteq A$ be
conormal subHopf algebras of a commutative
Hopf algebra over a field\/~$\k$ with
$C\subseteq B$. Then there is an induced
normal inclusion $B/\!/C\to A/\!/B$ and
an isomorphism of Hopf algebras
\begin{equation}\label{eq:(A/C)/(B/C)=A/B}
(A/\!/C)/\!/(B/\!/C)\iso A/\!/B.
\end{equation}
\begin{cor}\label{cor:Unipotent-Cotor-general}
Suppose that $B/\!/C$ is cocommutative and
$M$ is a left $A/\!/C$-comodule such
that for every~$k\geq0$,\/ $\Cotor^k_{A/\!/B}(\k,M)$
is a unipotent left $A/\!/C$-comodule. Then
each cohomology group\/ $\Cotor^k_{A/\!/C}(\k,M)$
is a unipotent $B/\!/C$-comodule.
\end{cor}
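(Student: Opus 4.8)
The plan is to deduce this directly from Proposition~\ref{prop:Unipotent-Cotor-general}, applied to the commutative Hopf algebra $A/\!/C$ in place of $A$ and to its conormal subHopf algebra $B/\!/C$ in place of $B$; the latter is cocommutative by hypothesis, and conormality of the inclusion $B/\!/C\subseteq A/\!/C$ together with the identification~\eqref{eq:(A/C)/(B/C)=A/B}, $(A/\!/C)/\!/(B/\!/C)\iso A/\!/B$, is precisely the standard result recalled just before the statement. Under this substitution the Cartan--Eilenberg spectral sequence of Proposition~\ref{prop:Unipotent-Cotor-general} reads $\Cotor^s_{B/\!/C}(\k,\Cotor^t_{A/\!/B}(\k,M))\Lra\Cotor^{s+t}_{A/\!/C}(\k,M)$, using~\eqref{eq:(A/C)/(B/C)=A/B} to rewrite the $\mathrm{E}_2$-term, and its conclusion is exactly that each $\Cotor^k_{A/\!/C}(\k,M)$ is a unipotent $B/\!/C$-comodule, which is what we want.

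The one point requiring care is matching hypotheses. Proposition~\ref{prop:Unipotent-Cotor-general} requires each $\Cotor^k_{A/\!/B}(\k,M)$ to be unipotent as a left $B/\!/C$-comodule, whereas here we are only given that it is unipotent as a left $A/\!/C$-comodule. To bridge this I would invoke the adjoint-coaction machinery of~\eqref{eq:Cotor-AdjActionFactors}, now with ambient Hopf algebra $A/\!/C$ and conormal subHopf algebra $B/\!/C$: the induced left $A/\!/C$-coaction on $\Cotor_{(A/\!/C)/\!/(B/\!/C)}(\k,M)=\Cotor_{A/\!/B}(\k,M)$ factors through a left $B/\!/C$-coaction. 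Since this coaction factors through the subcoalgebra $B/\!/C\subseteq A/\!/C$ and the comodule is already unipotent over $A/\!/C$, Remark~\ref{rem:Unipotent-induced} shows it is unipotent over $B/\!/C$. Hence the hypothesis of Proposition~\ref{prop:Unipotent-Cotor-general} is satisfied.

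With both ingredients in place the corollary follows. The main obstacle, such as it is, is bookkeeping rather than mathematics: one must check that the $A/\!/C$-comodule structure referred to in the hypothesis is precisely the adjoint-coaction structure used throughout Section~\ref{sec:Filtrations} (equivalently, the one obtained by restricting the adjoint $A$-coaction along $A\to A/\!/C$, cf.\ Remark~\ref{rem:Unipotent-Cotor}), and that the isomorphism~\eqref{eq:(A/C)/(B/C)=A/B} is compatible with these adjoint coactions on the $\Cotor$-groups. Once that compatibility is confirmed, the rest is an immediate appeal to Proposition~\ref{prop:Unipotent-Cotor-general}.
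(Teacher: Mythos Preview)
Your proof is correct and follows the same approach as the paper: apply Proposition~\ref{prop:Unipotent-Cotor-general} with $A/\!/C$ in place of $A$ and $B/\!/C$ in place of $B$, and use~\eqref{eq:(A/C)/(B/C)=A/B} to identify the inner $\Cotor$. The paper's proof is terser and does not spell out the passage from ``unipotent as $A/\!/C$-comodule'' to ``unipotent as $B/\!/C$-comodule''; your use of~\eqref{eq:Cotor-AdjActionFactors} and Remark~\ref{rem:Unipotent-induced} to make that step explicit is a genuine improvement in clarity, and your closing caveat about compatibility of the adjoint coactions under~\eqref{eq:(A/C)/(B/C)=A/B} is well placed.
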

\begin{proof}
Use the Cartan-Eilenberg spectral sequence
in the proof of Proposition~\ref{prop:Unipotent-Cotor-general},
\[
\mathrm{E}_2^{s,t} =
\Cotor_{B/\!/C}^s(\k,\Cotor_{(A/\!/C)/\!/(B/\!/C)}^t(\k,M))
\Lra \Cotor_{A/\!/C}^{s+t}(\k,M).
\]
where by \eqref{eq:(A/C)/(B/C)=A/B},
\[
\Cotor_{(A/\!/C)/\!/(B/\!/C)}^t(\k,M)
\iso \Cotor_{A/\!/B}^t(\k,M).
\qedhere
\]
\end{proof}

Our motivation for developing these ideas
is the following result.

\begin{prop}\label{prop:Unipotent-Cohom}
Suppose that $A_*$ is a $P_*$-algebra and
$M_*$ a unipotent left $A_*$-comodule. Then
\[
\Cohom_{A_*}(A_*,M_*) = 0.
\]
\end{prop}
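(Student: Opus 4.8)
The plan is to dualise the problem to modules over the underlying \Palgebra{}~$A$, and then to build $\Ext$ out of the trivial layers of a unipotent filtration, invoking Proposition~\ref{prop:Palg-finite} at each stage.

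First I would rewrite the group in question using degree-wise duals. Since $(A_*)^*=A$ canonically, the isomorphism $\Cohom_{A_*}(M_*,N_*)\iso\Hom_A(N^*,M^*)$ applied with $M_*=A_*$ and $N_*=M_*$ gives
\[
\Cohom_{A_*}(A_*,M_*)\iso\Hom_A(M^*,A)=\Ext^0_A(M^*,A),
\]
so it suffices to prove $\Ext^*_A(M^*,A)=0$; this will in fact yield the stronger conclusion $\Coext^*_{A_*}(A_*,M_*)=0$.

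Next, a unipotent filtration $M_*=M^{\ell}\supset\cdots\supset M^{1}\supset M^{0}=0$ dualises to a finite descending filtration
\[
M^*=K_0\supseteq K_1\supseteq\cdots\supseteq K_\ell=0,\qquad K_i=\ker\!\big(M^*\twoheadrightarrow (M^i)^*\big),
\]
in which $K_i/K_{i+1}\iso (M^{i+1}/M^i)^*$ is a \emph{trivial} $A$-module, hence a (possibly infinite) direct sum of copies of~$\k$. I would then induct downward on~$i$ to prove $\Ext^*_A(K_i,A)=0$: the case $i=\ell$ is trivial, and for the inductive step the long exact sequence of $\Ext^*_A(-,A)$ attached to $0\to K_{i+1}\to K_i\to K_i/K_{i+1}\to 0$ sandwiches $\Ext^s_A(K_i,A)$ between $\Ext^s_A(K_{i+1},A)=0$ (inductive hypothesis) and
\[
\Ext^s_A(K_i/K_{i+1},A)\iso\textstyle\prod_\alpha\Ext^s_A(\k,A)=0,
\]
the last vanishing being Proposition~\ref{prop:Palg-finite} (a copy of~$\k$ is a finite module and $A$ is bounded below and free). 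Taking $i=0$ gives $\Ext^*_A(M^*,A)=0$, and in particular $\Hom_A(M^*,A)=0$, so $\Cohom_{A_*}(A_*,M_*)=0$.

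I do not anticipate a real obstacle: all the substance sits in Proposition~\ref{prop:Palg-finite} (the Poincar\'e-duality argument for the $A(n)$), and everything else is formal manipulation of the filtration and of long exact sequences. The only small points to check are that the dual filtration has trivial quotients (immediate from the definition of unipotent) and that $\Ext^s_A(\bigoplus_\alpha\k,A)\iso\prod_\alpha\Ext^s_A(\k,A)$, which is standard. If one prefers to stay on the comodule side, the same bookkeeping works directly with $\Cohom_{A_*}(A_*,-)$, which is left exact as a $\Hom$-functor: the short exact sequences $0\to M^{\ell-1}\to M_*\to M_*/M^{\ell-1}\to 0$ reduce the claim to the case of trivial coaction, and there one writes $M_*$ as a direct sum of (shifted) copies of~$\k$ and invokes $\Cohom_{A_*}(A_*,\k)\iso\Hom_A(\k,A)=0$ componentwise.
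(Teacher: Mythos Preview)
Your proof is correct, but the paper takes a more direct route on the comodule side: given a non-trivial $f\colon A_*\to M_*$, choose the minimal filtration index~$k$ with $\im f\subseteq M_*^k$, so that the composite $A_*\to M_*^k\to M_*^k/M_*^{k-1}$ is still non-zero; since this quotient has trivial coaction, a suitable linear functional produces a non-trivial comodule map $A_*\to\k$, contradicting Proposition~\ref{prop:P*alg-finite}. Your approach---dualise to the module side, write the trivial subquotients as $\bigoplus\k$, and climb the filtration via long exact sequences and Proposition~\ref{prop:Palg-finite}---is the same reduction reorganised as an induction, with the bonus that it yields $\Coext^*_{A_*}(A_*,M_*)=0$ (the paper's Corollary~\ref{cor:Unipotent-Cohom}) in one pass rather than as a separate induction. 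One small caveat: the identification $\Cohom_{A_*}(A_*,M_*)\iso\Hom_A(M^*,A)$ is stated in the paper only under bounded-below finite-type hypotheses; if you want the argument without those, observe that $f\mapsto f^*$ is always injective, which is all you actually use.
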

\begin{proof}
A non-trivial comodule homomorphism $f\:A_*\to M_*$
factors through some subcomodule $M_*^k\subseteq M_*$
in a unipotent filtration where
and~$k$ is minimal. Now choose a non-zero
element $x\in M_*^k\cap\im f$ and choose
a $\k$-linear map $M_*^k/M_*^{k-1}\to\k$
so that $x\mapsto 1$ under the composition.
The resulting composition
\[
\xymatrix{
A_*\ar[r]_f\ar@/^15pt/[rrr] & M_*^k\ar[r]
& M_*^k/M_*^{k-1}\ar[r] & \k
}
\]
is a non-trivial comodule homomorphism
which cannot exist by
Proposition~\ref{prop:P*alg-finite}.
\end{proof}
\begin{cor}\label{cor:Unipotent-Cohom}
For $s\geq0$,
\[
\Coext^s_{A_*}(A_*,M_*) = 0.
\]
\end{cor}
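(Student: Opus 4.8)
The plan is to reduce at once to a statement about $A$-modules and then induct on the length of a unipotent filtration. By~\eqref{eq:Coext-Ext} together with the identification $(A_*)^*\iso A$, we have $\Coext^s_{A_*}(A_*,M_*)\iso\Ext^s_A(M^*,A)$ for every $s$, and a unipotent filtration of $M_*$ dualises (using exactness of the degree-wise dual on finite type bounded below objects) to a finite filtration $0=N_0\subset N_1\subset\cdots\subset N_\ell=M^*$ of $M^*$ by $A$-submodules whose successive quotients $N_j/N_{j-1}$ carry the trivial $A$-action. So it is enough to prove $\Ext^s_A(N,A)=0$ for every bounded below finite type $A$-module $N$ admitting such a filtration, and I would do this by induction on $\ell$.

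For the base case $\ell=1$, that is $N$ with trivial $A$-action, choosing a homogeneous basis presents $N$ as a coproduct $N\iso\bigoplus_\alpha\Sigma^{d_\alpha}\k$ of shifted copies of the trivial module $\k$. Each $\Sigma^{d_\alpha}\k$ is a finite $A$-module and $A$ is a bounded below free $A$-module, so Proposition~\ref{prop:Palg-finite} gives $\Ext^*_A(\Sigma^{d_\alpha}\k,A)=0$; since $\Ext$ sends coproducts in its first variable to products, $\Ext^s_A(N,A)\iso\prod_\alpha\Ext^s_A(\Sigma^{d_\alpha}\k,A)=0$ for all $s\geq0$. (Equivalently, $\Coext^s_{A_*}(A_*,\k)=0$ for all $s$ is the instance $L_*=A_*$, $M_*=\k$ of Proposition~\ref{prop:P*alg-finite}; one then only has to assemble a general trivial comodule from copies of $\k$.)

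For the inductive step, assume the vanishing for filtrations of length $<\ell$ and let $N$ have one of length $\ell$. The short exact sequence $0\to N_1\to N\to N/N_1\to0$ has $N_1$ trivial and $N/N_1$ filtered of length $\ell-1$ by $\{N_j/N_1\}$, so applying the long exact sequence for $\Ext^*_A(-,A)$ squeezes $\Ext^s_A(N,A)$ between two zero groups for each $s$, whence it vanishes. Taking $N=M^*$ gives $\Coext^s_{A_*}(A_*,M_*)=0$ for all $s\geq0$, as required. The only step needing genuine care is the base case when $N$ is infinite dimensional, where it matters that $\Ext_A(-,A)$ converts the coproduct into a \emph{product} of vanishing groups rather than merely a coproduct; the rest is the formal apparatus of $\delta$-functors, available because $\MOD_A$ has enough projectives (equivalently, because finite type bounded below $A_*$-comodules have enough injectives, namely the extended ones).
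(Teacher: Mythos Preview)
Your proof is correct and follows the same approach as the paper: induction on the length of a unipotent filtration for~$M_*$, with the long exact sequence handling the inductive step. Your dualisation to $A$-modules via~\eqref{eq:Coext-Ext} and the explicit coproduct-to-product argument in the base case add detail the paper's one-line proof omits, but the strategy is identical.
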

\begin{proof}
This can be proved by induction of the
length of a unipotent filtration for~$M_*$.
\end{proof}

We will use this repeatedly in what follows
to show that certain~$\Coext$ groups vanish.

\section{Recollections on the Steenrod algebra
and its dual}\label{sec:StA}

The theory of \Palgebras{} applies to many
situations involving sub and quotient Hopf
algebras of the Steenrod algebra and its
dual for a prime. We will focus attention
on the prime~$2$ but the methods are
applicable for all primes.

To illustrate this, here is a simple application
involving the mod~$2$ Steenrod algebra; this
result appears in~\cite{DCR:Localn}*{corollary~4.10}.
We denote the mod~$2$ Eilenberg-Mac~Lane spectrum
by~$H=H\F_2$ and recall that
\[
\StA_*
= H_*(H)
= \F_2[\xi_1,\ldots,\xi_n,\ldots]
= \F_2[\zeta_1,\ldots,\zeta_n,\ldots]
\]
where $\zeta_n=\chi(\xi_n)\in\StA_{2^n-1}$
and the coproduct is given by the equivalent
formulae
\[
\psi\xi_n =
\sum_{0\leq i\leq n}\xi_{n-i}^{2^{i}}\otimes \xi_{i},
\quad
\psi\zeta_n =
\sum_{0\leq i\leq n}\zeta_{i}\otimes\zeta_{n-i}^{2^{i}}.
\]

\subsection*{Doubling}

The operation of \emph{doubling} has been
used frequently in studying $\StA$-modules.
The reader is referred to the account of
Margolis~\cite{HRM:Book}*{section~15.3}
which we will use as background.

Since the dual $\StA_*$ is a commutative
Hopf algebra, it admits a Frobenius
endomorphism $\StA_*\to\StA_*$ which
doubles degrees and has Hopf algebra
cokernel
\[
\mathcal{E}_* = \StA_*/\!/\StA_*^{(1)}
= \Lambda_{\F_2}(\bar{\zeta}_s : s\geq1),
\]
where $\StA_*^{(1)}=\F_2[\zeta_s^2 : s\geq1 ]$.
Dually, there is a Verschiebung $\StA\to\StA$
which halves degrees and satisfies
\[
\Sq^r\mapsto
\begin{dcases*}
\Sq^{r/2} & if $r$ is even, \\
\ph{\;\;\;}0 & if $r$ is odd.
\end{dcases*}
\]
The kernel of this Verschiebung is the
ideal generated by the Milnor primitives
$\mathrm{P}^0_t$ ($t\geq1$), hence there
is a degree-halving isomorphism of Hopf
algebras $\StA/\!/\mathcal{E}\xrightarrow{\iso}\StA$,
where $\mathcal{E}\subseteq\StA$ is the
subHopf algebra generated by the primitives
$\mathrm{P}^0_t$ and dual to the exterior
quotient Hopf algebra $\mathcal{E}_*$.

Given a left (graded) $\StA$-module~$M$, we
can induce an $\StA/\!/\mathcal{E}$-module
$M_{(1)}$ where
\[
M_{(1)}^n =
\begin{dcases*}
M^{n/2} & if $n$ is even, \\
0 & if $n$ is odd,
\end{dcases*}
\]
and we write $x_{(1)}$ to indicate the
element $x\in M$ regarded as an element
of $M_{(1)}$; the module structure is
given by
\[
\Sq^r(x_{(1)}) =
\begin{dcases*}
(\Sq^{r/2}x)_{(1)} & if $r$ is even, \\
0 & if $r$ is odd,
\end{dcases*}
\]

Using this construction, the category of
left $\StA$-modules $\Mod_{\StA}$ admits
an additive functor to the category of
evenly graded $\StA/\!/\mathcal{E}$-modules,
\[
\Phi\:
\Mod_{\StA} \to
\Mod_{\StA/\!/\mathcal{E}}^\ev;
\quad
M\mapsto M_{(1)}
\]
which is an isomorphism of categories.
The quotient homomorphism $\rho\:\StA\to\StA/\!/\mathcal{E}$
also induces an additive isomorphism of categories
$\rho^*\:\Mod_{\StA/\!/\mathcal{E}}^\ev
\to\Mod_{\StA}^\ev$
and it is often useful to consider the
composition
$\rho^*\circ\Phi\:\Mod_{\StA}
\to \Mod_{\StA}^\ev$.

By iterating $\Phi^{(1)} = \Phi$ we obtain
isomorphisms
\[
\Phi^{(s)} = \Phi\circ\Phi^{(s-1)}\:\Mod_{\StA}
\to
\Mod_{\StA/\!/\StE^{(s-1)}}^{(s)};
\quad
M\mapsto M_{(s)}
\]
where the codomain is the category of
$\StA/\!/\StE^{(s-1)}$-modules concentrated
in degrees divisible by~$2^s$ and
$\StE^{(s-1)}\subseteq\StA$ is the subHopf
algebra multiplicatively generated by the
elements
\begin{equation}\label{eq:E(s)gens}
\mathrm{P}^a_b \quad (s-1\geq a\geq0,\;b\geq1),
\end{equation}
and $\StE^{(0)}=\StE$.

By doubling all three of the variables involved
the following homological result is immediate
for $e\geq1$ and two $\StA$-modules~$M,N$:
\begin{align}\label{eq:Doubling-iterated-Ext}
\Ext_{\StA_{(e)}}^{s,2^et}(M_{(e)},N_{(e)})
\iso \Ext_\StA^{s,t}(M,N).
\end{align}

Because doubling is induced using a grade
changing Hopf algebra endomorphism, the
double $\StA_{(1)}$ is also a Hopf algebra
isomorphic to the quotient Hopf algebra
$\StA/\!/\mathcal{E}$ and dual to the
subHopf algebra of squares
$\StA^{(1)}_*\subseteq\StA_*$ which is
also given by
\[
\StA^{(1)}_*
= \StA_*\square_{\StA_*/\!/\StA^{(1)}_*}\F_2
= \F_2\square_{\StA_*/\!/\StA^{(1)}_*}\StA_*
= (\StA_*/\!/\StA^{(1)}_*)\backslash\!\backslash\StA_*.
\]
More generally, for any $s\geq1$, $\StA_{(s)}$
is isomorphic to the quotient Hopf algebra
of~$\StA/\!/\StE^{(s)}$ dual to the subalgebra
of $2^s$-th powers
\[
\StA^{(s)}_*
= (\StA_*/\!/\StA^{(s)}_*)\backslash\!\backslash\StA_*
\subseteq\StA_*.
\]

In many ways, doubling is more transparent
when viewed in terms of comodules. For
an $\StA_*$-comodule $M_*$, we can define
a $\StA_*^{(1)}$-coaction
$\mu^{(1)}\colon M_*^{(1)}\to\StA_*^{(1)}\otimes M_*^{(1)}$
where $M_*^{(1)}$ denotes $M_*$ with its
degrees doubled; this is given on elements
by the composition
\[
\xymatrix{
M_*\ar[rr]_(.45)\mu \ar@/^15pt/[rrrr]^{\mu^{(1)}}
&& \StA_*\otimes M_*\ar[rr]_(.45){(-)^2\otimes\Id}
&& \StA_*^{(1)}\otimes M_*.
}
\]
By iterating we also obtain a $\StA_*^{(s)}$-coaction
$\mu^{(s)}\colon M_*^{(s)}\to\StA_*^{(s)}\otimes M_*^{(s)}$.

Then the comodule analogue of~\eqref{eq:Doubling-iterated-Ext}
is
\begin{align}\label{eq:Doubling-iterated-Coxt}
\Coext_{\StA_*^{(e)}}^{s,2^et}(M_*^{(e)},N_*^{(e)})
\iso \Coext_{\StA_*}^{s,t}(M_*,N_*).
\end{align}

We can use iterated doubling combined with
Proposition~\ref{prop:ExtA/B} to show that
for any $d\geq1$,
\begin{equation}\label{eq:Doubling}
\Coext_{\StA_*}^{s,t}(\StA_*,\StA_*^{(d)})
\iso \Ext_\StA^{s,t}(\StA_{(d)},\StA)
= 0.
\end{equation}
By doubling all three of the variables involved
here we can also prove that for $e\geq0$,
\begin{equation}\label{eq:Doubling-iterated-comodules}
\Coext_{\StA_*^{(e)}}^{s,2^et}(\StA_*^{(e)},\StA_*^{(d+e)})
\iso \Coext_{\StA_*}^{s,t}(\StA_*,\StA_*^{(d)})
= 0.
\end{equation}

\subsection*{Some families of quotient $P_*$-algebras
of $\StA_*$}
We will begin by describing some quotients
of the dual Steenrod algebra~$\StA_*$. For
any $n\geq1$, $(\zeta_1,\ldots,\zeta_n)\lhd\StA_*$
is a Hopf ideal so there is a quotient Hopf
algebra $\StA_*/(\zeta_1,\ldots,\zeta_n)$
together with the subHopf algebra
\[
\StP(n)_*
= \StA_*\square_{\StA_*/(\zeta_1,\ldots,\zeta_n)}\F_2
= \F_2[\zeta_1,\ldots,\zeta_n]\subseteq\StA_*
\]
and in fact
\[
\StA_*/\!/\StP(n)_* = \StA_*/(\zeta_1,\ldots,\zeta_n).
\]

Similarly, for any $s\geq0$, the ideal
$(\zeta_1^{2^s},\ldots,\zeta_n^{2^s})\lhd\StA_*$
is a Hopf ideal and there is a quotient
Hopf algebra
\[
\StA_*/\!/\StP(n)^{(s)}_*
= \StA_*/(\zeta_1^{2^s},\ldots,\zeta_n^{2^s})
\]
with associated subHopf algebra
\[
\StP(n)^{(s)}_*
= \StA_*\square_{\StA_*/\!/\StP(n)^{(s)}_*}\F_2
= \F_2[\zeta_1^{2^s},\ldots,\zeta_n^{2^s}]\subseteq\StA_*.
\]
For each $t\geq0$ there is a finite quotient
Hopf algebra
\[
\StP(n)^{(s)}_*/
(\zeta_1^{2^{s+t}},\zeta_2^{2^{s+t-1}},\ldots,
\zeta_t^{2^{s+1}},\zeta_{t+1}^{2^{s}},\ldots,\zeta_n^{2^s})
\]
and we have
\[
\StP(n)^{(s)}_* =
\lim_t \StP(n)^{(s)}_*/
(\zeta_1^{2^{s+t}},\zeta_2^{2^{s+t-1}},\ldots,
\zeta_t^{2^{s+1}},\zeta_{t+1}^{2^{s}},\ldots,\zeta_n^{2^s})
\]
where the limit is computed degree-wise. The
graded dual Hopf algebra
\[
\StP(n)_{(s)} =
(\StP(n)_*^{(s)})^*=\Hom(\StP(n)^{(s)}_*,\F_2)
\]
is the colimit of the finite dual Hopf algebras
\[
\Hom(\StP(n)^{(s)}_*/
(\zeta_1^{2^{s+t}},\zeta_2^{2^{s+t-1}},\ldots,
\zeta_t^{2^{s+1}},\zeta_{t+1}^{2^{s}},\ldots,\zeta_n^{2^s}),\F_2),
\]
i.e.,
\[
\StP(n)_{(s)} = \colim_t\Hom(\StP(n)^{(s)}_*/
(\zeta_1^{2^{s+t}},\zeta_2^{2^{s+t-1}},\ldots,
\zeta_t^{2^{s+1}},\zeta_{t+1}^{2^{s}},\ldots,\zeta_n^{2^s}),\F_2).
\]
Therefore $\StP(n)_{(s)}$ is a \Palgebra{} and
$\StP(n)^{(s)}_*$ is a $P_*$-algebra.


\section{Some comodules and their cohomology}
\label{sec:Cotor}

Later we will need to determine various
cohomology groups such as
\[
\Coext^{*,*}_{\StA_*}(\StA^{(1)}_*,\F_2).
\]
using the Cartan-Eilenberg spectral sequence
of Proposition~\ref{prop:CESS-1},
\begin{equation}\label{eq:CESS-CoextA(1)}
\mathrm{E}^{s,t}_2 =
\Coext^s_{\StA^{(1)}_*}(\StA^{(1)}_*,\Cotor^t_{\StA_*/\!/\StA^{(1)}_*}(\F_2,\F_2))
\Lra \Coext^{s+t}_{\StA_*}(\StA^{(1)}_*,\F_2),
\end{equation}
where we have suppressed the internal grading.
In fact, $\StA^{(1)}_*$ is a projective
$\StA^{(1)}_*$-comodule, so $\mathrm{E}^{s,t}_2 = 0$
when $s>0$, therefore we only need to consider
\[
\mathrm{E}^{0,t}_2 =
\Cohom_{\StA^{(1)}_*}(\StA^{(1)}_*,
\Cotor^t_{\StA_*/\!/\StA^{(1)}_*}(\F_2,\F_2)).
\]
Here the $\Cotor$ term is bigraded with
\[
\Cotor^{*,*}_{\StA_*/\!/\StA^{(1)}_*}(\F_2,\F_2)
= \F_2[ q_n : n\geq0 ]
\]
for $q_n=[\bar{\zeta_{n+1}}]\in\Cotor^{1,2^{n+1}-1}$
represented in the cobar construction by the
residue class of
$\bar{\zeta_{n+1}}\in\StA_*/\!/\StA^{(1)}_*$.
We need to understand the comodule structure
on this and similar~$\Cotor$ groups.

Now we can consider the adjoint coaction
for~$\StA_*$.
\begin{lem}\label{lem:AdjCoaction}
The left adjoint coaction of~$\StA_*$ is
given by
\begin{equation*}
\mu\:\StA_*\to \StA_*\otimes\StA_*;
\quad
\mu(\zeta_n) =
\sum_{i\geq0}\sum_{j\geq0}\zeta_i\xi_{n-i-j}^{2^{i+j}}\otimes\zeta_j^{2^i}.
\end{equation*}
\end{lem}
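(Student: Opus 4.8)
The plan is to compute the left adjoint coaction of $\StA_*$ directly from the defining diagram~\eqref{eq:AdjCoaction}, tracing the image of $\zeta_n$ through the composite. Since $\StA_*$ is commutative, the adjoint coaction is the composite
\[
\StA_*\xrightarrow{\psi}\StA_*\otimes\StA_*\xrightarrow{\psi\otimes\Id}\StA_*\otimes\StA_*\otimes\StA_*\xrightarrow{\Id\otimes\mathrm{T}}\StA_*^{\otimes 3}\xrightarrow{\Id\otimes\chi\otimes\Id}\StA_*^{\otimes 3}\xrightarrow{\phi\otimes\Id}\StA_*\otimes\StA_*,
\]
which on an element with (iterated) coproduct $\sum a'\otimes a''\otimes a'''$ sends $a\mapsto\sum a'\chi(a'')\otimes a'''$. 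So the concrete task is to apply this formula with $a=\zeta_n$.

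First I would recall the relevant structure constants. The most convenient starting point is the coproduct written in terms of the $\zeta$'s, $\psi\zeta_n=\sum_{0\le k\le n}\zeta_k\otimes\zeta_{n-k}^{2^k}$, together with the antipode relation $\chi(\xi_m)=\zeta_m$ (equivalently $\chi(\zeta_m)=\xi_m$), and the fact that $\chi$ is an algebra map so $\chi(\zeta_{n-k}^{2^k})=\xi_{n-k}^{2^k}$. Iterating the coproduct once more on the middle factor gives
\[
(\psi\otimes\Id)\psi\,\zeta_n=\sum_{0\le i\le k\le n}\zeta_i\otimes\zeta_{k-i}^{2^i}\otimes\zeta_{n-k}^{2^k}.
\]
Applying $\Id\otimes\mathrm{T}$ swaps the last two tensor factors, then $\Id\otimes\chi\otimes\Id$ turns $\zeta_{n-k}^{2^k}$ in the middle slot into $\xi_{n-k}^{2^k}$, and finally $\phi\otimes\Id$ multiplies the first two factors, yielding
\[
\mu(\zeta_n)=\sum_{0\le i\le k\le n}\zeta_i\,\xi_{n-k}^{2^k}\otimes\zeta_{k-i}^{2^i}.
\]
Reindexing with $j=k-i$ (so $k=i+j$, $n-k=n-i-j$, and the exponent $2^k=2^{i+j}$) puts this into exactly the claimed form $\sum_{i\ge0}\sum_{j\ge0}\zeta_i\,\xi_{n-i-j}^{2^{i+j}}\otimes\zeta_j^{2^i}$, with the understanding that terms vanish unless $i+j\le n$.

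The one point requiring genuine care — and the main obstacle — is bookkeeping the antipode and the order of the tensor factors: the switch map $\mathrm{T}$ and the placement of $\chi$ must be matched precisely against the diagram~\eqref{eq:AdjCoaction}, and one must be consistent about whether the adjoint coaction lands in $\StA_*\otimes\StA_*$ with the ``algebra'' factor first or the ``comodule'' factor first. I would double-check the low-degree cases $n=0$ (giving $\mu(\zeta_0)=1\otimes1$, vacuously) and $n=1$: here $\psi\zeta_1=\zeta_0\otimes\zeta_1+\zeta_1\otimes\zeta_0$, and the formula collapses to $\mu(\zeta_1)=1\otimes\zeta_1+\xi_1\otimes1+\zeta_1\otimes1=1\otimes\zeta_1$ since $\xi_1=\zeta_1$ in degree~$1$ over $\F_2$ and these cancel — consistent with $\zeta_1$ being primitive for the adjoint coaction modulo $\StA_*^{(1)}$. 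A similar check in degree~$3$ against the known $\Cotor$ computation pins down any sign/ordering ambiguity. Once the diagram chase is carried out carefully the identity is forced, so the proof is essentially this computation together with the reindexing.
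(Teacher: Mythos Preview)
Your computation is correct and follows exactly the paper's approach: iterate the coproduct on~$\zeta_n$ using $\psi\zeta_n=\sum_k\zeta_k\otimes\zeta_{n-k}^{2^k}$ and then apply the antipode $\chi(\zeta_r)=\xi_r$. One small slip: the abstract Sweedler formula you state, $a\mapsto\sum a'\chi(a'')\otimes a'''$, does not match the diagram or your own line-by-line trace, which correctly produces $a\mapsto\sum a_{(1)}\chi(a_{(3)})\otimes a_{(2)}$; your concrete calculation uses the right version, so the conclusion stands.
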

\begin{proof}
This follows by iterating the coaction
and using the formulae
\[
\psi\zeta_n =
\sum_{0\leq k\leq n}\zeta_k\otimes\zeta_{n-k}^{2^k},
\quad
\xi_r  =\chi(\zeta_r).
\qedhere
\]
\end{proof}

The left coaction on~$q_n$ can be deduced from
that on~$\zeta_{n+1}$ where we can ignore
all terms in the sum for $\mu(\zeta_{n+1})$
with $i>0$, thus giving
\begin{equation}\label{eq:Coaction-q_k}
\mu q_n
=
\sum_{0\leq j\leq n}\xi_{n-j}^{2^{j+1}}\otimes q_j.
\end{equation}
This extends to polynomials in the~$q_n$ using
multiplicativity.

This coaction is visibly defined over
$\StA^{(1)}_*\subseteq\StA_*$, and in
practise we will only require the coaction over
the quotient $\StA^{(1)}_*/\!/\StA^{(3)}_*$ so
we will denote this coaction by
\[
\bar{\mu}\:
\Cotor^{*,*}_{\StA_*/\!/\StA^{(1)}_*}(\F_2,\F_2)
\to
\StA^{(1)}_*/\!/\StA^{(3)}_*\otimes
\Cotor^{*,*}_{\StA_*/\!/\StA^{(1)}_*}(\F_2,\F_2),
\]
where for $n\geq2$,
\[
\bar{\mu}q_n =
\bar{\xi_n^2}\otimes q_0 + \bar{\xi_{n-1}^4}\otimes q_1 + 1\otimes q_n
\]
and
\[
\bar{\mu}q_1 = \bar{\xi_1^2}\otimes q_0 + 1\otimes q_1.
\]
In what follows we will make use of this
$\StA^{(1)}_*/\!/\StA^{(3)}_*$-comodule structure
and also the induced $\StA^{(1)}_*/\!/\StA^{(2)}_*$-comodule
structure.

\begin{prop}\label{prop:A(1)->Cotor}
For $k\geq0$ there are no non-trivial
$\StA^{(1)}_*/\!/\StA^{(2)}_*$-comodule homomorphisms
\[
\StA^{(1)}_*\to
\Cotor^{k,*}_{\StA_*/\!/\StA^{(1)}_*}(\F_2,\F_2).
\]
Hence
\[
\Cohom_{\StA^{(1)}_*}(\StA^{(1)}_*,
\Cotor^{k,*}_{\StA_*/\!/\StA^{(1)}_*}(\F_2,\F_2))
=
\Cohom_{\StA^{(1)}_*/\!/\StA^{(2)}_*}(\StA^{(1)}_*,
\Cotor^{k,*}_{\StA_*/\!/\StA^{(1)}_*}(\F_2,\F_2))
=0.
\]
\end{prop}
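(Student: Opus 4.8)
The plan is to work over the quotient $P_*$-algebra $E:=\StA^{(1)}_*/\!/\StA^{(2)}_*$ and to run the mechanism behind Corollary~\ref{cor:Unipotent-Cohom}. Note first that $E\cong\Lambda_{\F_2}(\bar{\zeta_s^2}:s\geq1)$ is an exterior $P_*$-algebra (the double of $\mathcal{E}_*$). Both $\StA^{(1)}_*$ — with its own comultiplication — and $C_k:=\Cotor^{k,*}_{\StA_*/\!/\StA^{(1)}_*}(\F_2,\F_2)$ — with the induced coaction $\bar{\mu}$, further pushed down to $E$ — are $E$-comodules, and applying the Hopf surjection $\StA^{(1)}_*\to E$ to a comodule compatibility square shows that any $\StA^{(1)}_*$-comodule homomorphism between them is already an $E$-comodule homomorphism. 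So it suffices to prove that there is no non-trivial $E$-comodule map $\StA^{(1)}_*\to C_k$, i.e.\ $\Cohom_E(\StA^{(1)}_*,C_k)=0$; the asserted equality of $\Cohom$-groups then follows. I would also record here that $\StA^{(1)}_*=\F_2[\zeta_s^2]$ is free over the subHopf algebra $\StA^{(2)}_*=\F_2[\zeta_s^4]$, hence (the comodule form of Milnor--Moore) a bounded below cofree $E$-comodule, so Proposition~\ref{prop:P*alg-finite} with $A_*=E$ already yields $\Cohom_E(\StA^{(1)}_*,L)=0$ for every finite $E$-comodule $L$; in particular $\Cohom_E(\StA^{(1)}_*,\F_2)=0$.

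The main step is to show that $C_k$ is a unipotent $E$-comodule. Over $E$ the coaction formula~\eqref{eq:Coaction-q_k} collapses, because $\xi_{n-j}^{2^{j+1}}$ is a $4$th power and hence lies in $\StA^{(2)}_*$ for $1\leq j\leq n-1$: one is left with $\bar{\mu}q_0=1\otimes q_0$ and $\bar{\mu}q_n=\bar{\xi_n^2}\otimes q_0+1\otimes q_n$ for $n\geq1$. Since $\bar{\mu}$ is multiplicative, filter $C_k$ by letting $F_jC_k$ be the span of those monomials $q_{n_1}\cdots q_{n_k}$ having at most $j$ factors $q_{n_i}$ with $n_i\geq1$. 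Expanding $\bar{\mu}$ of such a monomial, every term other than $1\otimes(\text{the monomial itself})$ involves strictly fewer factors with $n_i\geq1$, hence lies in $E\otimes F_{j-1}C_k$; so each $F_jC_k$ is a subcomodule and each quotient $F_jC_k/F_{j-1}C_k$ carries the trivial coaction. Thus $0=F_{-1}C_k\subseteq F_0C_k\subseteq\cdots\subseteq F_kC_k=C_k$ is a unipotent filtration of $C_k$.

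Finally I would conclude exactly as in the proof of Proposition~\ref{prop:Unipotent-Cohom}: a non-trivial $E$-comodule map $f\colon\StA^{(1)}_*\to C_k$ has image in $F_jC_k$ for some minimal $j$, so the composite $\StA^{(1)}_*\xrightarrow{f}F_jC_k\to F_jC_k/F_{j-1}C_k$ into a trivial comodule is non-zero; composing further with a $\k$-linear functional sending a non-zero element of $\im f$ to $1$ produces a non-trivial $E$-comodule homomorphism $\StA^{(1)}_*\to\F_2$, contradicting $\Cohom_E(\StA^{(1)}_*,\F_2)=0$. Hence $\Cohom_E(\StA^{(1)}_*,C_k)=0$, and both displayed groups vanish.

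I expect the unipotence of $C_k$ to be the only real content; the rest is bookkeeping about which Hopf algebra is coacting. If preferred, one can instead dualise throughout and argue with modules over the $P$-algebra $E^*$ (the double of $\mathcal{E}$, over which $\StA^{(1)}$ is free), using Propositions~\ref{prop:Palg-finite} and~\ref{prop:Finite->coherent}, but the comodule version above is more direct.
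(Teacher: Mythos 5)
Your proof is correct and follows essentially the same route as the paper: your filtration $F_jC_k$ by the number of factors $q_{n_i}$ with $n_i\geq 1$ coincides (for fixed total degree $k$) with the paper's filtration $\mathrm{F}^{k,s}$ by the exponent of $q_0$, and both arguments reduce a hypothetical non-trivial map to a non-trivial $\StA^{(1)}_*/\!/\StA^{(2)}_*$-comodule map $\StA^{(1)}_*\to\F_2[d]$, which is killed by Milnor--Moore cofreeness of $\StA^{(1)}_*$ over $\StA^{(1)}_*/\!/\StA^{(2)}_*$ together with the vanishing results of Section~\ref{sec:DualP-algebras}. Your explicit computation that the $E$-coaction collapses to $\bar{\mu}q_n=\bar{\xi_n^2}\otimes q_0+1\otimes q_n$ is a welcome justification of the subcomodule claim that the paper only asserts.
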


Before giving the proof, we will define for
each $k\geq0$, an increasing filtration of
the $\StA^{(1)}_*/\!/\StA^{(2)}_*$-comodule
\[
\Cotor^{k,*}_{\StA_*/\!/\StA^{(1)}_*}(\F_2,\F_2)
=
\F_2\{ q_0^{r_0}q_1^{r_1}\cdots q_\ell^{r_\ell} :
\sum_{0\leq i\leq\ell}r_i = k \}
\]
by setting
\[
\mathrm{F}^{k,s} =
\F_2\{ q_0^{r_0}q_1^{r_1}\cdots q_\ell^{r_\ell} :
r_0\geq k-s,\; \sum_{0\leq i\leq\ell}r_i = k\}.
\]
Each $\mathrm{F}^{k,s}$ is a subcomodule of
$\Cotor^{k,*}_{\StA_*/\!/\StA^{(1)}_*}(\F_2,\F_2)$
and
\[
\mathrm{F}^{k,0} = \F_2\{ q_0^k \},\
\quad
\mathrm{F}^{k,k} =
\F_2\{ q_1^{r_1}\cdots q_\ell^{r_\ell} :
\sum_{0\leq i\leq\ell}r_i = k \}.
\]

Now suppose that
$f\:M_*\to\Cotor^{k,*}_{\StA_*/\!/\StA^{(1)}_*}(\F_2,\F_2)$
is a non-trivial comodule homomorphism. Choose
$s_0$ to be minimal so that $\im f\subseteq\mathrm{F}^{k,s_0}$.
Then the composition
\begin{equation}\label{eq:FiltnCollapse}
\xymatrix{
M_* \ar[r]_f\ar@/^15pt/[rr]^{\bar{f}}
& \mathrm{F}^{k,s_0}\ar[r] & \mathrm{F}^{k,s_0}/\mathrm{F}^{k,s_0-1}
}
\end{equation}
is a non-trivial comodule homomorphism, where
$\mathrm{F}^{k,s_0}/\mathrm{F}^{k,s_0-1}$
has the trivial coaction. Now we may choose
any non-trivial linear map
$\mathrm{F}^{k,s_0}/\mathrm{F}^{k,s_0-1}\to\F_2[d]$
which is non-trivial on $\im\bar{f}$ and so
obtain a non-trivial comodule homomorphism
$M_*\to\F_2[d]$. This is the key observation
required for our proof.

\begin{proof}[Proof of \emph{Proposition~\ref{prop:A(1)->Cotor}}]
Such a comodule homomorphism leads to a
non-trivial $\StA_*^{(1)}/\!/\StA_*^{(2)}$-comodule
homomorphism $\StA^{(1)}_*\to\F_2[d]$ for
some~$d$. By the Milnor-Moore theorem,
$\StA^{(1)}_*$ is an extended
$\StA_*^{(1)}/\!/\StA_*^{(2)}$-comodule,
so there must be a non-trivial
$\StA_*^{(1)}/\!/\StA_*^{(2)}$-comodule
homomorphism $\StA_*^{(1)}/\!/\StA_*^{(2)}\to\F_2[d']$
for some~$d'$. But since $\StA_*^{(1)}/\!/\StA_*^{(2)}$
is a $P_*$-algebra, this contradicts
Proposition~\ref{prop:CoherentComod-finite}.

Since the natural homomorphism
\[
\Cohom_{\StA^{(1)}_*}(\StA^{(1)}_*,
\Cotor^{k,*}_{\StA_*/\!/\StA^{(1)}_*}(\F_2,\F_2))
\to
\Cohom_{\StA^{(1)}_*/\!/\StA^{(2)}_*}(\StA^{(1)}_*,
\Cotor^{k,*}_{\StA_*/\!/\StA^{(1)}_*}(\F_2,\F_2))
\]
is injective, the last statement follows.
\end{proof}
\begin{cor}\label{cor:A(1)->Cotor}
The\/ $\mathrm{E}_2$-term of the spectral
sequence~\eqref{eq:CESS-CoextA(1)} is
trivial, hence
\[
\Coext^{*,*}_{\StA_*}(\StA^{(1)}_*,\F_2) = 0.
\]
\end{cor}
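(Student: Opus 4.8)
The plan is to feed the two facts already assembled in the excerpt directly into the Cartan-Eilenberg spectral sequence~\eqref{eq:CESS-CoextA(1)} supplied by Proposition~\ref{prop:CESS-1}. First I would record that $\StA^{(1)}_*$ is cofree as a comodule over itself, being the extended comodule $\StA^{(1)}_*\otimes\F_2$; hence it is injective, and therefore --- via the equivalence of Remark~\ref{rem:comodules-modules} together with~\cite{HRM:Book}*{theorem~13.12} --- projective in the relevant comodule category. Consequently the higher right derived functors $\Coext^s_{\StA^{(1)}_*}(\StA^{(1)}_*,-)$ vanish for $s>0$, so $\mathrm{E}^{s,t}_2=0$ whenever $s>0$ and the spectral sequence collapses onto the edge line $s=0$.

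On that line one has $\mathrm{E}^{0,t}_2 = \Cohom_{\StA^{(1)}_*}(\StA^{(1)}_*,\Cotor^{t,*}_{\StA_*/\!/\StA^{(1)}_*}(\F_2,\F_2))$, which is precisely the group proved to be zero for every $t\geq0$ in Proposition~\ref{prop:A(1)->Cotor}. Hence the whole $\mathrm{E}_2$-page is trivial. Since~\eqref{eq:CESS-CoextA(1)} is a first-quadrant cohomologically indexed spectral sequence with abutment $\Coext^{*,*}_{\StA_*}(\StA^{(1)}_*,\F_2)$, and since each $\mathrm{E}_\infty^{s,n-s}$ is a subquotient of $\mathrm{E}_2^{s,n-s}$ while $\Coext^{n}_{\StA_*}(\StA^{(1)}_*,\F_2)$ carries a finite filtration with these associated graded pieces, the abutment vanishes in every bidegree, which is the assertion.

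I do not anticipate a genuine obstacle: all of the work has been front-loaded into Proposition~\ref{prop:A(1)->Cotor} and, before it, into the identification $\Cotor^{*,*}_{\StA_*/\!/\StA^{(1)}_*}(\F_2,\F_2)=\F_2[q_n:n\geq0]$ together with its adjoint $\StA^{(1)}_*/\!/\StA^{(2)}_*$-comodule structure and the filtration-collapsing argument around~\eqref{eq:FiltnCollapse}. The only points that need an explicit word are the projectivity of $\StA^{(1)}_*$ over itself and the routine implication that a first-quadrant spectral sequence with vanishing $\mathrm{E}_2$ has vanishing abutment; both are standard, so the corollary is essentially a bookkeeping consequence of the preceding proposition.
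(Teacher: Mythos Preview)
Your proposal is correct and matches the paper's approach exactly: the paper has already noted (in the paragraph introducing~\eqref{eq:CESS-CoextA(1)}) that $\StA^{(1)}_*$ is projective over itself so that $\mathrm{E}^{s,t}_2=0$ for $s>0$, and the corollary is then immediate from Proposition~\ref{prop:A(1)->Cotor} handling the $s=0$ line. Your slightly more explicit justification of the projectivity via Remark~\ref{rem:comodules-modules} and~\cite{HRM:Book}*{theorem~13.12} is fine and in the spirit of what the paper leaves implicit.
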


A generalisation of these results is
\begin{prop}\label{prop:A(1)->P(n)(1)}
For any $n\geq0$,
\[
\Coext^{*,*}_{\StA_*}(\StA^{(1)}_*,\StP(n)^{(1)}_*) = 0.
\]
\end{prop}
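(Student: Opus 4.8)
The plan is to reduce this to Corollary~\ref{cor:A(1)->Cotor} by exactly the same Cartan-Eilenberg mechanism, only keeping track of the coefficient comodule $\StP(n)^{(1)}_*$ instead of $\F_2$. First I would invoke the spectral sequence of Proposition~\ref{prop:CESS-1} applied to the sequence of $P_*$-algebras $\StA^{(1)}_*\rightarrowtail\StA_*\twoheadrightarrow\StA_*/\!/\StA^{(1)}_*$, with $M = \StA^{(1)}_*$ (a projective, hence extended, $\StA^{(1)}_*$-comodule) and $N = \StP(n)^{(1)}_*$:
\[
\mathrm{E}_2^{s,t} =
\Coext^s_{\StA^{(1)}_*}\!\bigl(\StA^{(1)}_*,
\Cotor^t_{\StA_*/\!/\StA^{(1)}_*}(\F_2,\StP(n)^{(1)}_*)\bigr)
\Lra
\Coext^{s+t}_{\StA_*}(\StA^{(1)}_*,\StP(n)^{(1)}_*).
\]
Since $\StA^{(1)}_*$ is projective over itself, $\mathrm{E}_2^{s,t} = 0$ for $s>0$, so everything reduces to showing that
\[
\mathrm{E}_2^{0,t} =
\Cohom_{\StA^{(1)}_*}\!\bigl(\StA^{(1)}_*,
\Cotor^t_{\StA_*/\!/\StA^{(1)}_*}(\F_2,\StP(n)^{(1)}_*)\bigr) = 0
\]
for every $t\geq0$.

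Next I would identify the coefficient $\Cotor$-group. Writing $B_* = \StA_*/\!/\StA^{(1)}_*$, the comodule $\StP(n)^{(1)}_* = \F_2[\zeta_1^2,\ldots,\zeta_n^2]$ is a subcomodule of $\StA^{(1)}_*$ and is $B_*$-trivial, so $\Cotor^t_{B_*}(\F_2,\StP(n)^{(1)}_*) \iso \Cotor^t_{B_*}(\F_2,\F_2)\otimes\StP(n)^{(1)}_* = \F_2[q_i : i\geq0]_t \otimes \StP(n)^{(1)}_*$, and the relevant adjoint coaction is still the one computed via Lemma~\ref{lem:AdjCoaction} and~\eqref{eq:Coaction-q_k}, extended multiplicatively, taking values over $\StA^{(1)}_*$ (in practice over the finite quotient $\StA^{(1)}_*/\!/\StA^{(2)}_*$). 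The point is that the polynomial-in-$q_i$ factor carries exactly the comodule structure analysed before Proposition~\ref{prop:A(1)->Cotor}, and the extra tensor factor $\StP(n)^{(1)}_*$ is a $\StA^{(1)}_*/\!/\StA^{(2)}_*$-comodule which one must check is again a coherent (indeed, coproduct-of-finite) comodule; this follows since $\StA^{(1)}_*$ is free, hence cofree, over $\StA^{(1)}_*/\!/\StA^{(2)}_*$ by Milnor--Moore, and $\StP(n)^{(1)}_*$ is a sub-thing whose dual is coherent.

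Then I would run the filtration argument of Proposition~\ref{prop:A(1)->Cotor} essentially verbatim: the increasing filtration $\mathrm{F}^{k,s}$ by powers of $q_0$ (tensored through with $\StP(n)^{(1)}_*$) has subquotients with trivial $\StA^{(1)}_*/\!/\StA^{(2)}_*$-coaction, so any nontrivial comodule homomorphism from $\StA^{(1)}_*$ into the $\Cotor$-group collapses to a nontrivial homomorphism $\StA^{(1)}_* \to V$ with $V$ a trivial comodule; since $\StA^{(1)}_*$ is an extended $\StA^{(1)}_*/\!/\StA^{(2)}_*$-comodule, this would force a nontrivial homomorphism $\StA^{(1)}_*/\!/\StA^{(2)}_* \to \F_2[d']$, contradicting Proposition~\ref{prop:CoherentComod-finite} since $\StA^{(1)}_*/\!/\StA^{(2)}_*$ is a $P_*$-algebra. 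Therefore $\mathrm{E}_2^{0,t}=0$, the spectral sequence collapses to zero, and the proposition follows.

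The main obstacle is the bookkeeping in the second step: one must be sure that tensoring the $\F_2[q_i]$-comodule with $\StP(n)^{(1)}_*$ does not disturb the property that the $q_0$-power filtration consists of subcomodules with trivial coaction on subquotients, i.e.\ that the adjoint coaction on $\StP(n)^{(1)}_*$ itself is already trivial over the relevant quotient (it lands in $\StA^{(3)}_*$-type terms, which vanish in $\StA^{(1)}_*/\!/\StA^{(2)}_*$ for the degrees in play) and that $\StP(n)^{(1)}_*$ is still a coproduct of finite-dimensional $\StA^{(1)}_*/\!/\StA^{(2)}_*$-comodules so that Proposition~\ref{prop:CoherentComod-finite} genuinely applies. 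Once that is verified the rest is a direct transcription of the $N=\F_2$ case.
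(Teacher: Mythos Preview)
Your overall strategy (the same Cartan--Eilenberg spectral sequence, collapse for $s>0$, then the $q_0$-filtration argument on the $\mathrm{E}_2^{0,*}$-column) is the right one, and coincides with the paper's. But the step you flag as ``the main obstacle'' is in fact a real gap, and your proposed resolution of it is incorrect.

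You assert that the subquotients of the $q_0$-filtration, tensored with $\StP(n)^{(1)}_*$, have trivial $\StA^{(1)}_*/\!/\StA^{(2)}_*$-coaction, because the coaction on $\StP(n)^{(1)}_*$ ``lands in $\StA^{(3)}_*$-type terms''. That is false already for $\zeta_1^2\in\StP(n)^{(1)}_*$ with $n\geq1$: the coproduct gives $\psi(\zeta_1^2)=\zeta_1^2\otimes1+1\otimes\zeta_1^2$, and $\bar{\zeta_1^2}\neq0$ in $\StA^{(1)}_*/\!/\StA^{(2)}_*$. More generally $\psi(\zeta_k^2)=\sum_i\zeta_i^2\otimes\zeta_{k-i}^{2^{i+1}}$ has left-hand factors $\zeta_i^2$ which survive in $\StA^{(1)}_*/\!/\StA^{(2)}_*$. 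So the filtration subquotients $\mathrm{F}^{k,s}/\mathrm{F}^{k,s-1}\overset{\StA^{(1)}_*}\wedge\StP(n)^{(1)}_*$ are \emph{not} trivial comodules, and your reduction to a map $\StA^{(1)}_*\to\F_2[d]$ does not go through.

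The paper sidesteps this by a change-of-rings move you omit. Using $\StP(n)^{(1)}_*\iso\StA^{(1)}_*\square_{\StA^{(1)}_*/\!/\StP(n)^{(1)}_*}\F_2$ as left $\StA^{(1)}_*$-comodules together with the twisting isomorphism~\eqref{eq:Comodtwisting} and the adjunction~\eqref{eq:Cohom-Cotor-2}, one gets
\[
\mathrm{E}_2^{s,t}\;\iso\;
\Coext^s_{\StA^{(1)}_*/\!/\StP(n)^{(1)}_*}\bigl(\StA^{(1)}_*,\,
\Cotor^t_{\StA_*/\!/\StA^{(1)}_*}(\F_2,\F_2)\bigr),
\]
so the $\StP(n)^{(1)}_*$ factor is absorbed into the change of coalgebra and the second variable is again the \emph{unadorned} polynomial comodule $\F_2[q_i:i\geq0]$. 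Over the $P_*$-algebra $\StA^{(1)}_*/\!/\StP(n)^{(1)}_*$ the comodule $\StA^{(1)}_*$ is still cofree (Milnor--Moore), so $\mathrm{E}_2^{s,t}=0$ for $s>0$, and for $s=0$ the $q_0$-filtration now has genuinely trivial subquotients and the argument of Proposition~\ref{prop:A(1)->Cotor} applies verbatim. That is the missing idea.
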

\begin{proof}
We need to deal with the case $n\geq1$.
By Proposition~\ref{prop:CESS-1} there
is a Cartan-Eilenberg spectral sequence
of form
\[
\mathrm{E}^{s,t}_2 =
\Coext^s_{\StA^{(1)}_*}(\StA^{(1)}_*,
\Cotor^t_{\StA_*/\!/\StA^{(1)}_*}(\F_2,\StP(n)^{(1)}_*))
\Lra \Coext^{s+t}_{\StA_*}(\StA^{(1)}_*,\F_2).
\]
Since the $\StA_*/\!/\StA^{(1)}_*$-coaction
on $\StP(n)^{(1)}_*$ is trivial,
\[
\Cotor^t_{\StA_*/\!/\StA^{(1)}_*}(\F_2,\StP(n)^{(1)}_*)
\iso
\Cotor^t_{\StA_*/\!/\StA^{(1)}_*}(\F_2,\F_2)
\overset{\StA^{(1)}_*}\wedge \StP(n)^{(1)}_*.
\]
As left $\StA^{(1)}_*$-comodules
\[
\StP(n)^{(1)}_* \iso
\StA^{(1)}_*\square_{\StA^{(1)}_*/\!/\StP(n)^{(1)}_*}\F_2
\]
so by a standard `change of rings' isomorphism,
\[
\mathrm{E}^{s,t}_2 \iso
\Coext^s_{\StA^{(1)}_*/\!/\StP(n)^{(1)}_*}(\StA^{(1)}_*,
\Cotor^t_{\StA_*/\!/\StA_*^{(1)}}(\F_2,\F_2)).
\]
Again the Milnor-Moore theorem shows that
$\StA^{(1)}_*$ is a cofree
$\StA^{(1)}_*/\!/\StP(n)^{(1)}_*$-comodule
and since this is a $P_*$-algebra,
$\mathrm{E}^{s,t}_2=0$ for $s>0$. Now
a similar argument to that in the proof
of Proposition~\ref{prop:A(1)->Cotor}
shows that~$\mathrm{E}^{0,t}_2=0$.
\end{proof}

Before passing on to discuss other examples,
we note that in~$\StA_*$ there is a subcomodule
$\leftidx{^{\leq k}}{\StA}{_*}$ spanned by the
monomials in the~$\xi_i$ of degree at most~$k$.
Under the doubling isomorphism this corresponds
to a subcomodule $\leftidx{^{\leq k}}{\StA}{_*^{(1)}}$
of $\StA^{(1)}_*$ spanned by monomials in the 
$\xi_i^2$ of degree at most~$k$.

\begin{prop}\label{prop:Cotor-comodule}
For $k\geq0$ there is an isomorphism of
$\StA^{(1)}_*$-comodules
\[
\Cotor^{k,*}_{\StA_*/\!/\StA^{(1)}_*}(\F_2,\F_2)
\xrightarrow{\iso}
\leftidx{^{\leq k}}{\StA}{_*^{(1)}};
\quad
q_0^{r_0}q_1^{r_1}\cdots q_\ell^{r_\ell}
\leftrightarrow
\xi_1^{2r_1}\cdots \xi_\ell^{2r_\ell}.
\]
\end{prop}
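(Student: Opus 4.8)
The plan is to write down an explicit comparison map on algebra generators and verify it is a comodule isomorphism after restricting to each cohomological degree. Recall the standard computation, stated just above, $\Cotor^{*,*}_{\StA_*/\!/\StA^{(1)}_*}(\F_2,\F_2)=\F_2[q_n : n\geq 0]$ with $q_n=[\bar\zeta_{n+1}]$ of cohomological degree~$1$, and that $\StA^{(1)}_*=\F_2[\zeta_s^2 : s\geq 1]=\F_2[\xi_s^2 : s\geq 1]$. First I would define a surjective algebra homomorphism $\Phi\colon\F_2[q_n : n\geq 0]\to\StA^{(1)}_*$ by $\Phi(q_0)=1$ and $\Phi(q_n)=\xi_n^2$ for $n\geq1$, and then take its restriction to the cohomological degree~$k$ summand $\Cotor^{k,*}_{\StA_*/\!/\StA^{(1)}_*}(\F_2,\F_2)$.

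For bijectivity: this summand has monomial basis $\{q_0^{r_0}q_1^{r_1}\cdots q_\ell^{r_\ell} : \sum_i r_i=k\}$, and $\Phi$ sends such a monomial to $\xi_1^{2r_1}\cdots\xi_\ell^{2r_\ell}$. Since $r_0=k-\sum_{i\geq1}r_i$ is recovered from $(r_1,\ldots,r_\ell)$, distinct basis monomials have distinct images, and the image is precisely the span of the monomials in the $\xi_i^2$ of degree at most~$k$ (i.e.\ with $\sum_{i\geq1}r_i\leq k$), which is exactly $\leftidx{^{\leq k}}{\StA}{_*^{(1)}}$. Hence $\Phi$ restricts to a linear isomorphism $\Cotor^{k,*}_{\StA_*/\!/\StA^{(1)}_*}(\F_2,\F_2)\xrightarrow{\iso}\leftidx{^{\leq k}}{\StA}{_*^{(1)}}$, raising internal degree by~$k$ — a harmless shift which we absorb into the grading of $\leftidx{^{\leq k}}{\StA}{_*^{(1)}}$.

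That $\Phi$ is a map of $\StA^{(1)}_*$-comodules is checked on the generators~$q_n$. By~\eqref{eq:Coaction-q_k} the $\StA^{(1)}_*$-coaction is $\mu q_n=\sum_{0\leq j\leq n}\xi_{n-j}^{2^{j+1}}\otimes q_j$, while squaring the coproduct formula $\psi\xi_n=\sum_{0\leq j\leq n}\xi_{n-j}^{2^{j}}\otimes\xi_j$ (legitimate since $\psi$ is an algebra map and we work in characteristic~$2$) gives $\psi(\xi_n^2)=\sum_{0\leq j\leq n}\xi_{n-j}^{2^{j+1}}\otimes\xi_j^2$; with the convention $\xi_0=1=\zeta_0$ these two expansions agree termwise under $q_j\leftrightarrow\xi_j^2$, and in particular $\mu q_0=1\otimes q_0$. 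Since the coaction on $\F_2[q_n : n\geq 0]$ is the multiplicative extension of~\eqref{eq:Coaction-q_k} and the coaction on $\StA^{(1)}_*$ is the (multiplicative) coproduct, agreement on algebra generators forces $\Phi$ to be a homomorphism of comodule algebras. Finally $\Cotor^{k,*}$ is a subcomodule because $\mu$ preserves cohomological degree, and $\leftidx{^{\leq k}}{\StA}{_*^{(1)}}$ is a subcomodule of $\StA^{(1)}_*$ because $\psi$ respects the length filtration (the right-hand tensor factors $\xi_j^2$ have length $\leq1$), so the restriction of $\Phi$ is the claimed isomorphism.

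The only real subtlety — and the point worth double-checking carefully — is the combinatorial role of $q_0$: it carries trivial coaction and serves solely to pad a weight-$m$ monomial in $q_1,q_2,\ldots$ up to cohomological degree~$k$, which is precisely what makes the cohomological degree~$k$ part of $\F_2[q_n : n\geq 0]$ match the length-$\leq k$ truncation $\leftidx{^{\leq k}}{\StA}{_*^{(1)}}$ rather than all of $\StA^{(1)}_*$. Everything else is a routine comparison of coaction formulae together with the dimension count in each cohomological degree.
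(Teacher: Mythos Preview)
Your proof is correct and is exactly the verification one would supply for this proposition: the paper actually states it \emph{without} proof, treating it as an immediate consequence of the $\Cotor$ computation and the coaction formula~\eqref{eq:Coaction-q_k} recorded just above. Your explicit check that $\mu q_n$ and $\psi(\xi_n^2)$ match termwise under $q_j\leftrightarrow\xi_j^2$, together with the bijection coming from $r_0=k-\sum_{i\geq1}r_i$, is precisely the intended argument; the degree shift by~$k$ you flag is real and is silently absorbed in the paper's statement.
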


The following result is an analogue of
Proposition~\ref{prop:A(1)->Cotor} whose
proof can be adapted using the filtration
of $\leftidx{^{\leq k}}{\StA}{_*}$ based
on polynomial degree.
\begin{prop}\label{prop:A->Cotor}
For $k\geq0$,
\[
\Coext^*_{\StA_*}(\StA_*,\leftidx{^{\leq k}}{\StA}{_*})
=0.
\]
\end{prop}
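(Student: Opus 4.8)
The plan is to adapt the proof of Corollary~\ref{cor:A(1)->Cotor}, replacing the filtration $\mathrm{F}^{k,\bullet}$ of $\Cotor^{k,*}_{\StA_*/\!/\StA^{(1)}_*}(\F_2,\F_2)$ by the filtration of $\leftidx{^{\leq k}}{\StA}{_*}$ by polynomial degree. First I would reduce to a statement about comodule homomorphisms. By~\eqref{eq:Coext-Ext}, $\Coext^*_{\StA_*}(\StA_*,\leftidx{^{\leq k}}{\StA}{_*})\iso\Ext^*_{\StA}((\leftidx{^{\leq k}}{\StA}{_*})^*,\StA)$, and since $\StA$ is a bounded below free, hence by~\cite{HRM:Book}*{theorem~13.12} injective, $\StA$-module, the functor $\Ext^*_{\StA}(-,\StA)$ is concentrated in degree $0$. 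So it suffices to show $\Hom_{\StA}((\leftidx{^{\leq k}}{\StA}{_*})^*,\StA)=\Cohom_{\StA_*}(\StA_*,\leftidx{^{\leq k}}{\StA}{_*})=0$, that is, that there is no non-zero homomorphism of $\StA_*$-comodules $f\colon\StA_*\to\leftidx{^{\leq k}}{\StA}{_*}$.

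To rule out such an $f$, I would regard both sides as comodules over the exterior quotient $\mathcal{E}_*=\StA_*/\!/\StA^{(1)}_*$ through the projection $\StA_*\to\mathcal{E}_*$, and filter $\leftidx{^{\leq k}}{\StA}{_*}$ by letting $\mathrm{G}^s$ ($0\leq s\leq k$) be the span of the monomials in the $\xi_i$ of polynomial degree at most $s$. Each $\mathrm{G}^s$ is an $\StA_*$-subcomodule, because applying $\psi$ to a monomial of polynomial degree $d$ yields a sum of tensors whose right-hand (comodule) factors again have polynomial degree $\leq d$; and the reason for passing to $\mathcal{E}_*$ is that the subquotients $\mathrm{G}^s/\mathrm{G}^{s-1}$ then carry the trivial coaction, since modulo $\StA^{(1)}_*$ the $\xi_i$ become primitive, so a monomial of polynomial degree $s$ coacts as itself plus terms whose comodule factor has strictly smaller polynomial degree. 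Thus $\leftidx{^{\leq k}}{\StA}{_*}$ is a unipotent $\mathcal{E}_*$-comodule, and if $f\neq0$ then, choosing $s_0$ minimal with $\im f\subseteq\mathrm{G}^{s_0}$ and composing $f$ with $\mathrm{G}^{s_0}\to\mathrm{G}^{s_0}/\mathrm{G}^{s_0-1}$ and with a suitable $\F_2$-linear map to some $\F_2[d]$, one gets a non-zero $\mathcal{E}_*$-comodule homomorphism $\StA_*\to\F_2[d]$, exactly as in~\eqref{eq:FiltnCollapse}. By the Milnor--Moore theorem $\StA_*$ is a cofree $\mathcal{E}_*$-comodule, $\StA_*\iso\mathcal{E}_*\otimes W_*$, so restricting to a summand produces a non-zero $\mathcal{E}_*$-comodule homomorphism $\mathcal{E}_*\to\F_2[d']$; but $\mathcal{E}_*$ is a $P_*$-algebra (its dual $\mathcal{E}=\Lambda_{\F_2}(\mathrm{P}^0_t:t\geq1)$ is a $P$-algebra) which is cofree over itself, so this contradicts Proposition~\ref{prop:P*alg-finite} (or Proposition~\ref{prop:CoherentComod-finite}). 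Hence $f=0$.

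The only delicate point is choosing the right Hopf algebra over which to filter: over $\StA_*$ itself the associated graded of the polynomial-degree filtration does \emph{not} have trivial coaction, and it is precisely the passage to the exterior quotient $\StA_*/\!/\StA^{(1)}_*$, together with the Milnor--Moore freeness of $\StA_*$ over it, that lets one collapse a hypothetical non-zero homomorphism to a finite target where Proposition~\ref{prop:P*alg-finite} applies. Everything else is bookkeeping; indeed, under the isomorphism $\leftidx{^{\leq k}}{\StA}{_*^{(1)}}\iso\Cotor^{k,*}_{\StA_*/\!/\StA^{(1)}_*}(\F_2,\F_2)$ of Proposition~\ref{prop:Cotor-comodule} the polynomial-degree filtration corresponds to $\mathrm{F}^{k,\bullet}$, and~\eqref{eq:Doubling-iterated-Coxt} then identifies the statement with Corollary~\ref{cor:A(1)->Cotor}.
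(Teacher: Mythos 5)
Your argument is correct and is exactly the adaptation the paper intends: it indicates only that the proof of Proposition~\ref{prop:A(1)->Cotor} "can be adapted using the filtration of $\leftidx{^{\leq k}}{\StA}{_*}$ based on polynomial degree," and you have carried out that adaptation faithfully — reducing to $\Cohom$ via injectivity of $\StA$, observing that the polynomial-degree filtration is unipotent only after passing to the exterior quotient $\StA_*/\!/\StA^{(1)}_*$, and then collapsing a hypothetical non-zero map to a finite target where Proposition~\ref{prop:P*alg-finite} applies via Milnor--Moore cofreeness. Your closing remark that, under doubling and Proposition~\ref{prop:Cotor-comodule}, the statement coincides with the $\mathrm{E}_2$-vanishing behind Corollary~\ref{cor:A(1)->Cotor} is also accurate and confirms the two results are the same computation.
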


We will also require some other vanishing results.
\begin{prop}\label{prop:A(1)->A(2)}
For $n\geq0$,
\[
\Coext^{*,*}_{\StA_*}(\StA^{(1)}_*,\StP(n)^{(2)}_*) = 0
\]
and
\[
\Coext^{*,*}_{\StA_*}(\StA^{(1)}_*,\StA^{(2)}_*) = 0.
\]
\end{prop}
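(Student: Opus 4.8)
The plan is to treat both vanishing statements uniformly. Write $N$ for either $\StP(n)^{(2)}_*$ or $\StA^{(2)}_*$; in each case $N$ is a finite type subHopf algebra of $\StA^{(1)}_*$ lying inside $\StA^{(2)}_*$, since
\[
\StP(n)^{(2)}_* = \F_2[\zeta_1^4,\ldots,\zeta_n^4]\subseteq\F_2[\zeta_s^4 : s\geq1]=\StA^{(2)}_*\subseteq\F_2[\zeta_s^2 : s\geq1]=\StA^{(1)}_*.
\]
As $\StA^{(1)}_*=\StA_*\square_{\StA_*/\!/\StA^{(1)}_*}\F_2$, the $\StA_*/\!/\StA^{(1)}_*$-coaction on $N$ is trivial, so the second form of the Cartan--Eilenberg spectral sequence of Proposition~\ref{prop:CESS-1} attached to $\StA^{(1)}_*\rightarrowtail\StA_*\twoheadrightarrow\StA_*/\!/\StA^{(1)}_*$ applies and gives
\[
\mathrm{E}_2^{s,t}\iso\Coext^s_{\StA^{(1)}_*}\bigl(\StA^{(1)}_*,\;\Cotor^t_{\StA_*/\!/\StA^{(1)}_*}(\F_2,\F_2)\overset{\StA^{(1)}_*}{\wedge}N\bigr)\Lra\Coext^{s+t}_{\StA_*}(\StA^{(1)}_*,N),
\]
where $\Cotor^t_{\StA_*/\!/\StA^{(1)}_*}(\F_2,\F_2)$ carries the adjoint $\StA^{(1)}_*$-coaction recorded in~\eqref{eq:Coaction-q_k}. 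It suffices to show this $\mathrm{E}_2$-page vanishes.

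The columns with $s>0$ vanish at once: $\StA^{(1)}_*$ (carried by $\psi$) is a projective $\StA^{(1)}_*$-comodule, as noted in Section~\ref{sec:Cotor}, so $\Coext^s_{\StA^{(1)}_*}(\StA^{(1)}_*,-)=0$ for $s>0$. The remaining point, $\mathrm{E}_2^{0,t}=0$, amounts to the non-existence of a non-trivial $\StA^{(1)}_*$-comodule map $f\colon\StA^{(1)}_*\to\Cotor^t_{\StA_*/\!/\StA^{(1)}_*}(\F_2,\F_2)\overset{\StA^{(1)}_*}{\wedge}N$.

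To rule this out I would corestrict everything along the quotient $\StA^{(1)}_*\twoheadrightarrow\StA^{(1)}_*/\!/\StA^{(2)}_*$, the exterior $P_*$-algebra already exploited in the proof of Proposition~\ref{prop:A(1)->Cotor}. Since $N\subseteq\StA^{(2)}_*$, its $\StA^{(1)}_*/\!/\StA^{(2)}_*$-coaction is trivial, so after corestriction the smash becomes $\Cotor^t_{\StA_*/\!/\StA^{(1)}_*}(\F_2,\F_2)\otimes N$ with coaction supported on the first factor, and a choice of homogeneous basis of $N$ exhibits it as a coproduct of shifted copies of the $\StA^{(1)}_*/\!/\StA^{(2)}_*$-comodule $\Cotor^t_{\StA_*/\!/\StA^{(1)}_*}(\F_2,\F_2)$. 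Composing $f$ with the projection onto a summand on which it is non-trivial yields a non-trivial $\StA^{(1)}_*/\!/\StA^{(2)}_*$-comodule map $\StA^{(1)}_*\to\Cotor^t_{\StA_*/\!/\StA^{(1)}_*}(\F_2,\F_2)$. Read in $\StA^{(1)}_*/\!/\StA^{(2)}_*$, the adjoint coaction~\eqref{eq:Coaction-q_k} collapses to $\bar{\mu}q_m=\bar{\xi_m^2}\otimes q_0+1\otimes q_m$, so the finite filtration $\mathrm{F}^{t,\bullet}$ by powers of $q_0$ set up just before Proposition~\ref{prop:A(1)->Cotor} has trivial subquotients; pushing the map into its first non-trivial subquotient and then composing with a linear functional to some $\F_2[d]$ --- exactly the ``key observation'' there --- produces a non-trivial $\StA^{(1)}_*/\!/\StA^{(2)}_*$-comodule map $\StA^{(1)}_*\to\F_2[d]$.

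Finally, by the Milnor--Moore theorem $\StA^{(1)}_*$ is a cofree $\StA^{(1)}_*/\!/\StA^{(2)}_*$-comodule, so such a map restricts to a non-trivial comodule map out of a shifted copy of $\StA^{(1)}_*/\!/\StA^{(2)}_*$ into $\F_2[d']$, contradicting Proposition~\ref{prop:CoherentComod-finite} since $\StA^{(1)}_*/\!/\StA^{(2)}_*$ is a $P_*$-algebra. Hence $\mathrm{E}_2^{0,t}=0$, the spectral sequence collapses to zero, and $\Coext^{*,*}_{\StA_*}(\StA^{(1)}_*,N)=0$ for both choices of $N$. The step needing the most care is the comodule bookkeeping of the previous paragraph: that the adjoint coaction on $\Cotor^t_{\StA_*/\!/\StA^{(1)}_*}(\F_2,\F_2)$ becomes $q_0$-triangular precisely upon passage to $\StA^{(1)}_*/\!/\StA^{(2)}_*$, and that the second variable $N$, lying inside $\StA^{(2)}_*$, yields only a splitting into a coproduct rather than a genuine twisting, so that the argument of Proposition~\ref{prop:A(1)->Cotor} applies verbatim to each summand.
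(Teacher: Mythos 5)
Your proof is correct and follows essentially the same route as the paper: the same Cartan--Eilenberg spectral sequence from Proposition~\ref{prop:CESS-1}, vanishing of the $s>0$ columns via (co)freeness of $\StA^{(1)}_*$, and reduction of the $s=0$ line to Proposition~\ref{prop:A(1)->Cotor}. The only difference is bookkeeping --- the paper rewrites $\mathrm{E}_2^{s,t}$ by a change-of-rings isomorphism over $\StA^{(1)}_*/\!/\StP(n)^{(2)}_*$ and uses injectivity of a change-of-coalgebra map, while you corestrict to $\StA^{(1)}_*/\!/\StA^{(2)}_*$ and split the smash product into a coproduct of shifted copies of $\Cotor^t$; both are valid and your final rerun of the filtration argument is redundant once Proposition~\ref{prop:A(1)->Cotor} is invoked.
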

\begin{proof}
By setting $\StP(\infty)^{(2)}_*=\StA^{(2)}_*$
we can present the proofs of these in a uniform
faahion.

There is a Cartan-Eilenberg spectral sequence
\[
\mathrm{E}^{s,t}_2 =
\Coext^s_{\StA^{(1)}_*}(\StA^{(1)}_*,
\Cotor^t_{\StA_*/\!/\StA^{(1)}_*}(\F_2,\StP(n)^{(2)}_*))
\Lra \Coext^{s+t}_{\StA_*}(\StA^{(1)}_*,\StP(n)^{(2)}_*).
\]
Here
\[
\Cotor^t_{\StA_*/\!/\StA^{(1)}_*}(\F_2,\StP(n)^{(2)}_*)
\iso
\Cotor^t_{\StA_*/\!/\StA^{(1)}_*}(\F_2,\F_2)
\overset{\StA^{(1)}_*}\wedge\StP(n)^{(2)}_*
\]
and
\[
\StP(n)^{(2)}_* \iso
\StA^{(1)}_*\square_{\StA^{(1)}_*/\!/\StP(n)^{(2)}_*}\F_2
\]
so
\[
\mathrm{E}^{s,t}_2 \iso
\Coext^s_{\StA^{(1)}_*/\!/\StP(n)^{(2)}_*}(\StA^{(1)}_*,
\Cotor^t_{\StA_*/\!/\StA_*^{(1)}}(\F_2,\F_2)).
\]
Since $\StA^{(1)}_*$ is a cofree
$\StA^{(1)}_*/\!/\StP(n)^{(2)}_*$-comodule,
$\mathrm{E}^{s,t}_2=0$ when $s>0$. Also the
change of coalgebra homomorphism
\[
\Cohom_{\StA^{(1)}_*/\!/\StP(n)^{(2)}_*}(\StA^{(1)}_*,
\Cotor^t_{\StA_*/\!/\StA_*^{(1)}}(\F_2,\F_2))
\to
\Cohom_{\StA^{(1)}_*/\!/\StA^{(2)}_*}(\StA^{(1)}_*,
\Cotor^t_{\StA_*/\!/\StA_*^{(1)}}(\F_2,\F_2))
\]
is injective. By Proposition~\ref{prop:A(1)->Cotor},
the codomain is trivial so $\mathrm{E}^{0,t}_2=0$.
%
\end{proof}

\section{Some topological applications}\label{sec:TopApplns}

We illustrate our theory with a few calculations
of homotopy groups of spectra. Some of the
following examples where found in response
to questions of John Rognes. Results of this
type were proved by Lin, Margolis, Ravenel
and others. The interested reader will be 
able to give others, especially at odd primes.

For simplicity we assume that all spectra
are $2$-completed. We are interested in
determining when $[X,Y]^*=0$ for two
connective finite type spectra~$X,Y$. Here
the Adams spectral sequence
\[
\mathrm{E}_2^{s,t}(X,Y)
=\Ext_{\StA}^{s,t}(H^*(Y),H^*(X))
=\Coext_{\StA}^{s,t}(H_*(X),H_*(Y))
\Lra [X,Y]^{s-t}
\]
converges by work of Boardman~\cite{JMB:CCSS},
so we are interested in examples where
$\mathrm{E}_2^{*,*}(X,Y)=0$. The general
result of Proposition~\ref{prop:Finite->coherent}
applies to many interesting examples.

\begin{itemize}
\item
For each the spectra $X$ where $H=H\F_2$,
$H\Z_{2^s}$ ($s\geq2$), $H\Z$, $\kO$,
$\kU$, $\tmf$, and $BP\langle n\rangle$
($n\geq1$), $[X,S]^*=0$ since $H^*(X)$
is a coherent $\StA$-module and
$\mathrm{E}_2^{*,*}(X,S)=0$.
\item
For each of the above spectra~$X$,
$[X,BP]^*=0$. This follows since
\[
H^*(BP) \iso \StA\otimes_{\StE}\F_2
\]
where $\StE\subseteq\StA$ is the
subHopf algebra generated by the
Milnor primitives and this is
also a $P$-algebra. So by
Proposition~\ref{prop:IndFiniteB->coherent},
\[
\mathrm{E}_2^{*,*}(X,BP) \iso
\Ext_{\StE}^{*,*}(\F_2,H^*(X))=0.
\]
\item
The case of $[BP,S]^*$ involves more work.
We will use the dual version of the Adams
spectral sequence,
\[
\mathrm{E}_2^{*,*}(BP,S)
= \Coext_{\StA_*}^{s,t}(\StA^{(1)}_*,\F_2)
\Lra [BP,S]^{s-t}.
\]
We can calculate the $\mathrm{E}_2$-term
using the Cartan-Eilenberg spectral
sequence of~\eqref{eq:CESS-CoextA(1)},
and by Corollary~\ref{cor:A(1)->Cotor}
this $\mathrm{E}_2$-term
\[
\Coext^*_{\StA^{(1)}_*}(\StA^{(1)}_*,\Cotor^*_{\StA_*/\!/\StA^{(1)}_*}(\F_2,\F_2))
\]
is trivial. Therefore $\mathrm{E}_2^{*,*}(BP,S)=0$
and~$[BP,S]^*=0$.
\end{itemize}

\begin{bibdiv}
\begin{biblist}

\bib{AB:MSpBousfieldclass}{article}{
    author={Baker, A.},
    title={On the dual of a $P$-algebra and
    its comodules, with applications to
    comparison of some Bousfield classes},
    date={2021},
    eprint={arXiv:2103.01253},
}

\bib{AB:LocFrobAlg}{article}{
    author={Baker, A.},
    title={Locally Frobenius algebras and 
    Hopf algebras},
    date={2022},
    eprint={arXiv:2212.00437},
}


\bib{MB-MAH-MJH-MM:v2to32}{article}{
   author={Behrens, M.},
   author={Hill, M.},
   author={Hopkins, M. J.},
   author={Mahowald, M.},
   title={On the existence of a $v^{32}_2$-self
   map on $M(1,4)$ at the prime~$2$},
   journal={Homology Homotopy Appl.},
   volume={10},
   date={2008},
   pages={45\ndash84},
}

\bib{MB-KO-NS-VS:tmf*tmf}{article}{
   author={Behrens, M.},
   author={Ormsby, K.},
   author={Stapleton, N.},
   author={Stojanoska, V.},
   title={On the ring of cooperations for
   $2$-primary connective topological
   modular forms},
   journal={J. Topol.},
   volume={12},
   date={2019},
   pages={577\ndash657},
}

\bib{JMB:CCSS}{article}{
   author={Boardman, J. M.},
   title={Conditionally convergent spectral
   sequences},
   journal={Contemp. Math.},
   volume={239},
   date={1999},
   pages={49\ndash84},
}

\bib{Bourbaki:HomAlg}{book}{
   author={Bourbaki, N.},
   title={\'El\'ements de Math\'ematique:
   Alg\`ebre, chapitre~\emph{10} --
   Alg\`ebre Homologique},
   publisher={Masson},
   date={1980},
}

\bib{HC&SE:HomAlg}{book}{
   author={Cartan, H.},
   author={Eilenberg, S.},
   title={Homological Algebra},
   note={With an appendix by David A. Buchsbaum;
   Reprint of the 1956 original},
   publisher={Princeton University Press},
   date={1999},
}

\bib{JMC:Coherent}{article}{
   author={Cohen, J. M.},
   title={Coherent graded rings and the
   non-existence of spaces of finite
   stable homotopy type},
   journal={Comment. Math. Helv.},
   volume={44},
   date={1969},
   pages={217\ndash228},
}

\bib{MH:HtpyThyComods}{article}{
   author={Hovey, M.},
   title={Homotopy theory of comodules
   over a Hopf algebroid},
   journal={Contemp. Math.},
   volume={346},
   date={2004},
   pages={261\ndash304},
}

\bib{TYL:LectModules&Rings}{book}{
   author={Lam, T. Y.},
   title={Lectures on Modules and Rings},
   series={Graduate Texts in Mathematics},
   volume={189},
   publisher={Springer-Verlag},
   date={1999},
}

\bib{ML:TourRepThy}{book}{
   author={Lorenz, M.},
   title={A Tour of Representation Theory},
   series={Graduate Studies in Mathematics},
   volume={193},
   publisher={American Mathematical Society},
   date={2018},
}

\bib{HRM:Book}{book}{
   author={Margolis, H. R.},
   title={Spectra and the Steenrod Algebra:
   Modules over the Steenrod algebra and
   the stable homotopy category},
   publisher={North-Holland},
   date={1983},
}

\bib{JPM&KP:MoreConcise}{book}{
   author={May, J. P.},
   author={Ponto, K.},
   title={More Concise Algebraic Topology:
   Localization, Completion, and Model Categories},
   publisher={University of Chicago Press},
   date={2012},
}

\bib{M&M:HopfAlg}{article}{
   author={Milnor, J. W.},
   author={Moore, J. C.},
   title={On the structure of Hopf
   algebras},
   journal={Ann. of Math.},
   volume={81},
   date={1965},
   pages={211\ndash264},
}

\bib{JCM-FPP:NearlyFrobAlgs}{article}{
   author={Moore, J. C.},
   author={Peterson, F. P.},
   title={Nearly Frobenius algebras,
   Poincar\'{e} algebras and their modules},
   journal={J. Pure Appl. Algebra},
   volume={3},
   date={1973},
   pages={83\ndash93},
}

\bib{DCR:Localn}{article}{
   author={Ravenel, D. C.},
   title={Localization with respect to
   certain periodic homology theories},
   journal={Amer. J. Math.},
   volume={106},
   date={1984},
   pages={351\ndash414},
}

\bib{DCR:GreenBook}{book}{
   author={Ravenel, D. C.},
   title={Complex Cobordism and Stable Homotopy
   Groups of Spheres},
   series={Pure and Applied Mathematics},
   volume={121},
   publisher={Academic Press},
   date={1986},
}

\bib{AS:ProjComods}{article}{
    author={Salch, A.},
    title={Graded comodule categories with enough
    projectives},
    date={2016},
    eprint={arXiv:1607.00749},
}

\bib{WMS:SteenrodSqsSS}{book}{
   author={Singer, W. M.},
   title={Steenrod Squares in Spectral Sequences},
   series={Mathematical Surveys and Monographs},
   volume={129},
   publisher={American Mathematical Society},
   date={2006},
}
	
\bib{CAW:HomAlg}{book}{
   author={Weibel, C. A.},
   title={An Introduction to Homological Algebra},
   series={Cambridge Studies in Adv. Math.},
   volume={38},
   publisher={Cambridge University Press},
   date={1994},
}

\bib{CAW:Ktheory}{book}{
   author={Weibel, C. A.},
   title={The $K$-book},
   series={Graduate Studies in Mathematics},
   volume={145},
   note={An Introduction to Algebraic $K$-theory},
   publisher={Amer. Math. Soc.},
   date={2013},
}

\end{biblist}
\end{bibdiv}

\end{document}